\def\1{1\!{\rm l}}
\definecolor{blendedblue}{rgb}{0.2,0.2,0.7}
\DeclareMathAlphabet{\mathpzc}{OT1}{pzc}{m}{it}
\newcommand{\N}{\mathbb{N}}
\newcommand{\given}{\,|\,}
\newcommand{\R}{\mathds{R}}
\newcommand{\bi}{\begin{enumerate}[label=\roman*)]}
\newcommand{\ei}{\end{enumerate}}
\newcommand{\ba}{\begin{array}{rcl}}
\newcommand{\ea}{\end{array}}
\newcommand*\bigcdot{\mathpalette\bigcdot@{.5}}
\newcommand*\bigcdot@[2]{\mathbin{\vcenter{\hbox{\scalebox{#2}{$\m@th#1\bullet$}}}}}
\newcommand\restr[2]{{
  \left.\kern-\nulldelimiterspace 
  #1 
  \vphantom{\big|} 
  \right|_{#2} 
  }}
\newcommand{\norm}[1]{\left\lVert#1\right\rVert}
\newtheorem{proposition}{Proposition}
\newtheorem{corollary}{Corollary}
\newtheorem{theorem}{Theorem}
\newtheorem{lemma}{Lemma}
\newtheorem{remark}{Remark}
\newtheorem{definition}{Definition}
\title{Variational Gaussian Processes For Linear Inverse Problems}
\author{%
  Thibault Randrianarisoa \\
  Department of Decision Sciences\\
  Bocconi University\\
  via Roentgen 1, 20136, Milano, MI, Italy \\
  \texttt{thibault.randrianarisoa@unibocconi.it} \\
   \And
   Botond Szabo \\
   Department of Decision Sciences \\
   Bocconi University \\
   via Roentgen 1, 20136, Milano, MI, Italy\\
   \texttt{botond.szabo@unibocconi.it} \\
}
\begin{document}

\maketitle

\begin{abstract}
By now Bayesian methods are routinely used in practice for solving inverse problems. In inverse problems the parameter or signal of interest is observed only indirectly, as an image of a given map, and the observations are typically further corrupted with noise. Bayes offers a natural way to regularize these problems via the prior distribution and provides a probabilistic solution, quantifying the remaining uncertainty in the problem. However, the computational costs of standard, sampling based Bayesian approaches can be overly large in such complex models. Therefore, in practice variational Bayes is becoming increasingly popular. Nevertheless, the theoretical understanding of these methods is still relatively limited, especially in context of inverse problems. In our analysis we investigate variational Bayesian methods for Gaussian process priors to solve linear inverse problems. We consider both mildly and severely ill-posed inverse problems and work with the popular inducing variables variational Bayes approach proposed by Titsias \cite{pmlr-v5-titsias09a}. We derive posterior contraction rates for the variational posterior in general settings and show that the minimax estimation rate can be attained by correctly tunned procedures. As specific examples we consider a collection of inverse problems including the heat equation, Volterra operator and Radon transform and inducing variable methods based on population and empirical spectral features.
\end{abstract}


\section{Introduction}\label{sec: intro}

In inverse problems we only observe the object of interest (i.e. function or signal) indirectly, through a transformation with respect to some given operator. Furthermore, the data is typically corrupted with measurement error or noise.  In practice the inverse problems are often ill-posed, i.e. the inverse of the operator is not continuous. Based on the level of ill-posedness we distinguish mildly and severely ill-posed cases. The ill-posedness of the problem prevents us from simply inverting the operator as it would blow up the measurement errors in the model. Therefore, to overcome this problem, regularization techniques are applied by introducing a penalty term in the maximum likelihood approximation. Standard examples include  generalized Tikhonov, total variation and Moore-Penrose estimators, see for instance \cite{MR885511,MR2361904,candes2008introduction,MR2421941,MR2047686,tihonov1963solution} or a recent survey \cite{arridge2019solving} on data-driven methods for solving inverse problems

An increasingly popular approach to introduce regularity to the model is via the Bayesian paradigm, see for instance \cite{arridge2019solving,briol2019probabilistic, MR2608372,law2015data,stuart2010inverse} and references therein. Beside regularization, Bayesian methods provide a probabilistic solution to the problem, which can be directly used to quantify the remaining uncertainty of the approach. This is visualised by plotting credible sets, which are sets accumulating prescribed percentage of the posterior mass. For computing the posterior typically MCMC algorithms are used, however, these can scale poorly with increasing sample size due to the complex structure of the likelihood. Therefore, in practice often alternative, approximation methods are used. Variational Bayes (VB) casts the approximation of the posterior into an optimization problem. The VB approach became increasingly popular to scale up Bayesian inverse problems, see for instance the recent papers \cite{hegde2022variational,maestrini2021variational,meng2022sparse,povala2022variational} and references therein. However, until recently these procedures were considered black box methods basically without any theoretical underpinning. Theoretical results  are just starting to emerge \cite{MR4124331, ray2022variational,MR4011769, MR4102680, zhang:gao:vb}, but we still have limited understanding of these procedures in complex models, like inverse problems.

In our analysis we consider Gaussian process (GP) priors for solving linear inverse problems. For Gaussian likelihoods, due to conjugacy, the corresponding posterior has an analytic form. Nevertheless, they are applied more widely, in non-conjugate settings as well. However, training and prediction even in the standard GP regression model, scales as $O\left(n^3\right)$ (or $O\left(n^2\right)$ for exact inference in the recent paper \cite{GPyTorch} leveraging advances in computing hardware) and $O\left(n^2\right)$, respectively, which practically limits GPs to a sample size $n$ of order $10^4$. Therefore, in practice often not the full posterior, but an approximation is computed. Various such approximation methods were proposed based on some sparse or low rank structure, see for instance \cite{CsatoThesis,Csato2002,Matthews2016,10.5555/1046920.1194909,Seeger2003b,Seeger2003a,pmlr-vR4-seeger03a,NIPS2005_4491777b,pmlr-v5-titsias09a}. Our focus here lies on the increasingly popular inducing variable variational Bayes method introduced in \cite{pmlr-v5-titsias09a,titsias2009variational}.

In our work we extend the inducing variable method for linear inverse problems and derive theoretical guarantees for the corresponding variational approximations. More concretely we adopt a frequentist Bayes point-of-view in our analysis by assuming that there exists a true data generating functional parameter of interest and investigate how well the variational posterior can recover this object. We derive contraction rates for the VB posterior around the true function both in the mildly and severely ill-posed inverse problems. We then focus on two specific inducing variable methods based on the spectral features of the prior covariance kernel. We show that for both methods if the number of inducing variables are chosen large enough for appropriately tunned priors the corresponding variational posterior concentrates around the true function with the optimal minimax estimation rate. One, perhaps surprising aspect of the derived results is that the number of inducing variables required to attain the optimal, minimax contraction rate is sufficiently less in the inverse setting than in the direct problem. Therefore, inverse problems can be scaled up at a higher degree than standard regression models.

\textit{Related literature. } The theory of Bayesian approaches to linear inverse problems is now well established. The study of their asymptotic properties started with the study of conjugate priors \cite{MR3084161, florens2016regularizing,dong:inverse,MR3477780,MR2906881,knapik2013bayesian} before addressing the non-conjugate case \cite{knapik:salomond:2018,kolyan:inverse} and rate-adaptive priors \cite{MR3477780,svv15}. By now we have a good understanding of both the accuracy of the procedure for recovering the true function and the reliability of the corresponding uncertainty statements. The theory of Bayesian non-linear inverse problems is less developed, but recent years have seen an increasing interest in the topic, see the monograph \cite{nickl2022bayesian} and references therein. Some algorithmic developments for variational Gaussian approximations in non-linear inverse problems, and applications to MCMC sampling, can be found in \cite{Pinski2015bis,Pinski2015}. 

The inducing variable approach for GPs proposed by \cite{pmlr-v5-titsias09a,titsias2009variational} has been widely used in practice. Recently, their theoretical behaviour was studied in the direct, nonparametric regression setting. In  \cite{pmlr-v97-burt19a} it was shown that the expected Kullback-Leibler divergence between the variational class and posterior tends to zero when sufficient amount of inducing variables were used. Furthermore, optimal contraction rates \cite{JMLR:v23:21-1128} and  frequentist  coverage guarantees \cite{nieman2022uncertainty,Ray2023,vakili2022improved} were derived for several inducing variable methods. Our paper focuses on extending these results to the linear inverse setting.

\textit{Organization.} The paper is organized as follows. In Section \ref{sec:main} we first introduce the inverse regression model where we carry out our analysis. Then we discuss the Bayesian approach using GPs and its variational approximations in Sections \ref{sec:GP:regression} and \ref{section: VB derivation}, respectively. As our main result we derive contraction rates for general inducing variable methods, both in the mildly and severely ill-posed cases. Then in Section \ref{sec:spectral:feature} we focus on two specific inducing variable methods based on spectral features and provide more explicit results for them. We apply these results for a collection of examples, including the Volterra operator, the heat equation and the Radon transform in Section \ref{sec:examples}. Finally, we demonstrate the applicability of the procedure in the numerical analysis of Section \ref{sec: Numerical analysis} and conclude the paper with discussion in Section \ref{sec:discussion}. The proof of the main theorem together with technical lemmas and additional simulation study are deferred to the supplementary material.

\textit{Notation.} Let  $C,c$ be absolute constants, independent of the parameters of the problem whose values may change from line to line. For two sequences ($a_n$) and ($b_n$) of numbers, $a_n\lesssim b_n$ means that there exists a universal constant $c$ such that $a_n\leq c b_n$ and we write $a_n\asymp b_n$ if both $a_n\lesssim b_n$ and $b_n\lesssim a_n$ hold simultaneously. We denote by $a_n\ll b_n$ if $|a_n/b_n|$ tends to zero. The maximum and minimum of two real numbers $a$ and $b$ are denoted by $a\vee b$ and $a \wedge b$, respectively. We use the standard notation $\delta_{ij}=\mathds{1}_{i=j}$. For $m\geq 1$, we note $\boldsymbol{S}_{++}^m$ the set of positive-definite matrices of size $m\times m$.


\section{Main results}\label{sec:main}

In our analysis we focus on the non-parametric random design regression model where the functional parameter is observed through a linear operator. More formally, we assume to observe i.i.d. pairs of random variables $(x_i,Y_i)_{i=1,...,n}$ satisfying
\begin{equation}\label{eq: model} 
Y_i = \left(\mathcal{A}f_0\right)(x_i) +Z_i,\qquad Z_i\stackrel{iid}{\sim}N(0,\sigma^2),\,  x_i\stackrel{iid}{\sim} G  \qquad i=1,\dots,n,
\end{equation}
where $f_0\in L_2(\mathcal{T};\mu)$, for some domain $\mathcal{T}\subset\mathbb{R}^d$ and measure $\mu$ on $\mathcal{T}$, is the underlying functional parameter of interest and $\mathcal{A}: L_2(\mathcal{T};\mu)\mapsto  L_2(\mathcal{X}; G)$, for the measure  $G=\mathcal{A}\mu$ on $\mathcal{X}$, is a known, injective, continuous linear operator. In the rest of the paper we use the notation $P_{f_0}$ and $E_{f_0}$  for the joint distribution and the corresponding expectation, respectively, of the data $(X,Y)=(x_i,Y_i)_{i=1,...,n}$. Furthermore, we denote by $E_X, P_X, E_{Y| X}, P_{Y| X}$ the expectation/distribution under $G^{\otimes n}$ and the law of $(Y_i)_i$ given the design respectively. Finally, for simplicity we take $\sigma^2=1$ in our computations. 

In the following, denoting $\mathcal{A}^*$ the adjoint of $\mathcal{A}$, we assume that the self-adjoint operator $\mathcal{A}^*\mathcal{A}\colon L_2(\mathcal{T};\mu) \mapsto L_2(\mathcal{T};\mu)$ possesses countably many positive eigenvalues $(\kappa_j^2)_{j}$ with respect to the eigenbasis $(e_j)_j$ (which is verified if $\mathcal{A}$ is a compact operator for instance). We remark that $\left(g_j\right)_{j}$ defined by $\mathcal{A} e_j = \kappa_j g_j$ is an orthonormal basis of $ L_2(\mathcal{X}; G)$. We work on the ill-posed problem where $\kappa_j\to 0$, the rate of decay characterizing the difficulty of the inverse problem. 

\begin{definition}\label{def: illposedness}
We say the problem is mildly ill-posed problem of degree $p>0$ if $\kappa_j\asymp j^{-p}$ has a polynomial decay. In the severely ill-posed problem, the rate we consider is exponential, $\kappa_j\asymp e^{-cj^p}$ for $c>0, p\geq 1$, and $p$ is the degree of ill-posedness once again.
\end{definition}

 In nonparametrics it is typically assumed that $f_0$ belongs to some regularity class. Here we consider the generalized Sobolev space
\begin{equation}\label{eq: sobolev class} \bar{H}^\beta\coloneqq \left\{f\in L_2(\mathcal{T}; \mu):\ \norm{f}_\beta<\infty\right\}, \quad \norm{f}_\beta^2= \sum_j j^{2\beta}\left|\langle f,e_j\rangle\right|^2,\end{equation}
for some $\beta>0$. We note that the difficulty in estimating $f_0$ from the data is twofold: one needs to deal with the observational noise, which is a statistical problem, as well as to invert the operator $\mathcal{A}$, which comes from inverse problem theory. As a result of the ill-posedness of the problem, recovering $f_0$ from the observations may suffer from problems of unidentifiability and instability. The solution to these issues is to incorporate some form of regularization in the statistical procedure. The Bayesian approach provides a natural way to incorporate regularization into the model via the prior distribution on the functional parameter. In fact penalized likelihood estimators can be viewed as the maximum a posteriori estimators with the penalty term induced by a prior. For example Tikhonov type regularizations can be related to the RKHS-norm of a Gaussian Process prior, see \cite{doi:10.1137/0505095,MR2514435} for a more detailed discussion.

\subsection{Gaussian Process priors for linear inverse problems}\label{sec:GP:regression}

We focus on the Bayesian solution of the inverse problem and exploit the Gaussian likelihood structure by considering conjugate Gaussian Process (GP) priors on $f$. A GP $\mathcal{GP}\left(\eta(\cdot),k(\cdot,\cdot)\right)$ is a set of random variables $\left\{f(t)\ |\ t \in \mathcal{T}\right\}$, such that any finite subset follows a Gaussian distribution. The GP is described by the mean function $\eta$ and a covariance kernel $k(t, t')$. We consider centered GPs as priors (i.e. we take $\eta\equiv 0$). Then the bilinear, symmetric nonnegative-definite function $k\colon\ \mathcal{T}\times \mathcal{T}\mapsto \mathbb{R}$ determines the properties of the process (e.g., its regularity). In view of the linearity of the operator $\mathcal{A}$ the corresponding posterior distribution is also a Gaussian process. The mean and covariance function of the posterior is given by
\begin{equation}
\begin{split}\label{eq:posterior}
t&\mapsto K_{t\boldsymbol{\mathcal{A}f}}\left(K_{\boldsymbol{\mathcal{A}f}\boldsymbol{\mathcal{A}f}} + \sigma^2I_n\right)^{-1}\textbf{y},\\
(t,s) &\mapsto k(t,s) - K_{t \boldsymbol{\mathcal{A}f}}\left(K_{\boldsymbol{\mathcal{A}f}\boldsymbol{\mathcal{A}f}} + \sigma^2I_n\right)^{-1}K_{\boldsymbol{\mathcal{A}f}s},
\end{split}
\end{equation}
where $\mathbf{y}=(y_1,\dots,y_n)^T$, $\boldsymbol{\mathcal{A}f}=\big(\mathcal{A}f(x_i)\big)_{i=1,...,n}$, $K_{\boldsymbol{\mathcal{A}f}\boldsymbol{\mathcal{A}f}}=E_{\Pi}\boldsymbol{\mathcal{A}f}\boldsymbol{\mathcal{A}f}^T\in\mathbb{R}^{n\times n}$ with $E_\Pi$ denoting the expectation with respect to the GP prior $\Pi$, $K_{t \boldsymbol{\mathcal{A}f}}^T=\big(E_{\Pi} \mathcal{A}f(x_i) f(t)\big)_{i=1,...,n}\in \mathbb{R}^{n}$, see the supplement for the detailed derivation.

 Due to the closed-form expressions for the posterior and the marginal likelihood, as well as the simplicity with which uncertainty quantification may be produced, GP regression has gained popularity \cite{MR2514435}.  Furthermore, the asymptotic frequentist properties of posteriors corresponding to GP priors in the direct problem, with $\mathcal{A}$ taken to be the identity operator, is well-established by now. Optimal contraction rates and confidence guarantees for Bayesian uncertainty quantification were derived in the regression setting and beyond, see for instance \cite{ic08,patietal15,10.1214/16-AOS1469,10.1214/15-EJS1078,10.1214/19-AOS1881,aadharry11,aadharry08,10.1214/14-AOS1289} and references therein. In the following, we say that $\varepsilon_n$ is an $L_2$--posterior contraction rate for the posterior $\Pi\left[\ \cdot\ | X,Y\right]$ if for any $M_n\to0$
\[ E_{f_0}\Pi\left[f\colon\ \norm{f-f_0}_{L_2(\mathcal{T}; \mu)} \geq M_n\varepsilon_n\ | \ X,Y\right]\to 0.\]

In our analysis we consider covariance kernels with eigenfunctions coinciding with the eigenfunctions of the operator $\mathcal{A}^*\mathcal{A}$, i.e. we take
\begin{equation}\label{eq: kernel decomposition}
k(t,s)=\sum\nolimits_j \lambda_j e_j(t)e_j(s),
\end{equation}
where $(\lambda_j)_{j}$ denote the corresponding eigenvalues. The asymptotic behaviour of the corresponding posterior has been well investigated in the literature both in the mildly and severely ill-posed inverse problems. Rate optimal contraction rates and frequentist coverage guarantees for the resulting credible sets were derived both for known and unknown regularity parameters \cite{florens2016regularizing,MR3477780,MR2906881,knapik2013bayesian,svv15}. These results were further extended for other covariance kernels where the eigenfunctions do not exactly match the eigenfunctions of the operator $\mathcal{A}$, but in principle they have to be closely related, see \cite{MR3084161, dong:inverse,knapik:salomond:2018,kolyan:inverse}.

However, despite the explicit, analytic form of the posterior given in \eqref{eq:posterior} and the theoretical underpinning, the practical applicability of this approach is limited for large sample size $n$. The computation of the posterior involves inverting the $n$-by-$n$ matrix $K_{\boldsymbol{\mathcal{A}f}\boldsymbol{\mathcal{A}f}} + \sigma^2I_n$, which has computational complexity $O(n^3)$. Therefore, in practice often not the true posterior, but a scalable, computationally attractive approximation is applied. Our focus here is on the increasingly popular inducing variable variational Bayes method introduced in \cite{pmlr-v5-titsias09a,titsias2009variational}.

\subsection{Variational GP for linear inverse problems}\label{section: VB derivation}

In variational Bayes the approximation of the posterior is casted as an optimization problem. First a tractable class of distributions $\mathcal{Q}$ is considered, called the variational class. Then the approximation $\Psi^*$ is computed by minimizing the Kullback-Leibler divergence between the variational class and the true posterior, i.e.
\[\Psi^*= \arg\ \inf\nolimits_{Q\in \mathcal{Q}} KL\left(Q|\!| \Pi\left[\ \cdot\ | X,Y\right]\right).\]
 There is a natural trade-off between the computational complexity and the statistical accuracy of the resulting approximation. Smaller variational class results in faster methods and easier interpretation, while more enriched classes preserve more information about the posterior ensuring better approximations.

In context of the Gaussian process regression model (with the operator $\mathcal{A}$ taken to be the identity), \cite{titsias2009variational} proposed a low-rank approximation approach based on inducing variables. The idea is to compress the information encoded in the observations of size $n$ into $m$ so called inducing variables. We extend this idea for linear inverse problems. Let us consider real valued random variables  $\mathbf{u}=(u_1,\dots,u_m)\in L_2\left(\Pi\right)$,  expressed as measurable linear functionals of $f$ and whose prior distribution is $\Pi_u$. In view of the linearity of $\boldsymbol{u}$, the joint distribution of  $(f,\mathbf{u})$ is a Gaussian process, hence the conditional distribution $f|\mathbf{u}$ denoted by $\Pi(\cdot|\mathbf{u})$, is also a Gaussian process with mean function and covariance kernel given by
\begin{align*}
t\mapsto K_{t\boldsymbol{u}} K_{\boldsymbol{u}\boldsymbol{u}}^{-1}\boldsymbol{u}\quad\text{and}\quad (t,s)\mapsto k(t,s)- K_{t\boldsymbol{u}}K_{\boldsymbol{u}\boldsymbol{u}}^{-1}K_{\boldsymbol{u}s},
\end{align*}
respectively, where $K_{t\boldsymbol{u}}=E_\Pi(f(t)\boldsymbol{u})\in \mathbb{R}^m$ and $K_{\boldsymbol{u}\boldsymbol{u}}=E_\Pi(\boldsymbol{u}\boldsymbol{u}^T)\in \mathbb{R}^{m\times m}$. Then the posterior is approximated via a probability measure $\boldsymbol{\Psi}_u$ on $\left(\mathbb{R}^m, \mathcal{B}(\mathbb{R}^m)\right)$ by $\Psi = \int \Pi[\cdot | \mathbf{u}] d\Psi_u( \mathbf{u}),$ which is absolutely continuous against $\Pi$ and satisfies
$ \frac{d\Psi}{d\Pi}(f)=  \frac{d\Psi_u}{d\Pi_u}\left(\mathbf{u}(f)\right).$
We note that the variables $\mathbf{u}$ were first considered to be point evaluations of the GP prior process before these ideas were extended to interdomain inducing variables, e.g. integral forms of the process \cite{NIPS2009_5ea1649a,titsias2009variational}. 

Taking $\Psi_{\boldsymbol{u}}=\mathcal{N}(\boldsymbol{\mu}_u,\Sigma)$ as a multivariate Gaussian, the corresponding $\Psi\propto \Pi(\cdot|\boldsymbol{u})\Psi_{\boldsymbol{u}}$ is a Gaussian process, with mean and covariance functions
\begin{equation}\label{eq:variational:family}
\begin{split}
t\mapsto K_{t\boldsymbol{u}} K_{\boldsymbol{u}\boldsymbol{u}}^{-1}\boldsymbol{\mu}_u,\quad \text{and}\quad
(t,s)\mapsto k(t,s)- K_{t\boldsymbol{u}}K_{\boldsymbol{u}\boldsymbol{u}}^{-1}(K_{\boldsymbol{u}\boldsymbol{u}}-\Sigma)K_{\boldsymbol{u}\boldsymbol{u}}^{-1} K_{s\boldsymbol{u}}^T.
\end{split}
\end{equation}
Letting $\boldsymbol{\mu}_u$ and $\Sigma$ be the free variational parameters, the variational family is taken as 
\[\mathcal{Q}\coloneqq\left\{\Psi\ |\ \Psi_u=\mathcal{N}(\boldsymbol{\mu}_u,\Sigma_u),\ \boldsymbol{\mu}_u\in\R^m,\ \Sigma_u\in \boldsymbol{S}_{++}^m\right\},\] 
consisting of ``$m$--sparse'' Gaussian processes. 

By similar computations as those from \cite{JMLR:v23:21-1128}, it can be shown that $\Pi\left[\ \cdot\ | X,Y\right]$ is equivalent to any element of $\mathcal{Q}$ (they are mutually dominated) so that the $KL$ divergence is always finite and there exists a $\Psi_u^*$, corresponding to the minimizer $\Psi^*$ of $\text{KL}\left(\Psi|\! |\Pi[\cdot | X,Y]\right)$. Furthermore, we have
\begin{align*}
 \frac{d\Psi^*}{d\Pi}(f)= \frac{d\Psi_{\boldsymbol{u}}^*}{d\Pi_{\boldsymbol{u}}}(\mathbf{u}) &\propto exp\Big(-\frac{1}{2\sigma^2} \int \sum\nolimits_{i=1}^n (Y_i-\mathcal{A}f(x_i))^2 d\Pi(f|\mathbf{u}) \Big)\\
 &\propto exp\Big(-\frac{1}{2\sigma^2} \sum\nolimits_{i=1}^n (Y_i-K_{\mathcal{A}f(x_i)\boldsymbol{u}}K_{\boldsymbol{u}\boldsymbol{u}}^{-1}\mathbf{u})^2 \Big)
\end{align*}
where $K_{\mathcal{A}f(x_i)\boldsymbol{u}}=E_{\Pi}\mathcal{A}f(x_i)\boldsymbol{u}^T$. One can observe that the parameters of the variational approximations are 
\begin{equation}\label{eq:variational:approx}
\begin{split}
\boldsymbol{\mu}_u^*&=\sigma^{-2}K_{\boldsymbol{u}\boldsymbol{u}}\left(K_{\boldsymbol{u}\boldsymbol{u}}+\sigma^{-2}K_{\boldsymbol{u}\boldsymbol{\mathcal{A}f}}K_{\boldsymbol{\mathcal{A}f}\boldsymbol{u}}\right)^{-1}K_{\boldsymbol{u}\boldsymbol{\mathcal{A}f}}\mathbf{y},\\
\Sigma^*_u&=K_{\boldsymbol{u}\boldsymbol{u}}\left(K_{\boldsymbol{u}\boldsymbol{u}}+\sigma^{-2}K_{\boldsymbol{u}\boldsymbol{\mathcal{A}f}}K_{\boldsymbol{u}\boldsymbol{\mathcal{A}f}}\right)^{-1}K_{\boldsymbol{u}\boldsymbol{u}},
\end{split}
\end{equation}
for $K_{\boldsymbol{u}\boldsymbol{\mathcal{A}f}}=K_{\boldsymbol{\mathcal{A}f}\boldsymbol{u}}^T=E_\Pi\mathbf{u}(\mathbf{\mathcal{A}f})^T$ the $m\times n$ matrix whose $j$th column is $K_{\mathcal{A}f(x_i)\boldsymbol{u}}$ and $(\mathbf{\mathcal{A}f})^T=\left(\mathcal{A}f(x_1),\dots,\mathcal{A}f(x_n)\right)$. Then the explicit form for the variational posterior $\Psi^{*}$ can be attained by plugging in the parameters \eqref{eq:variational:approx} into the variational mean and covariance function \eqref{eq:variational:family}. We also define $Q_{\boldsymbol{\mathcal{A}f}\boldsymbol{\mathcal{A}f}}=K_{\boldsymbol{u}\boldsymbol{\mathcal{A}f}}^TK_{\boldsymbol{u}\boldsymbol{u}}K_{\boldsymbol{u}\boldsymbol{\mathcal{A}f}}$. 

We investigate the statistical inference properties of the above variational posterior distribution $\Psi^*$. More concretely we focus on how well the variational approximation can recover the underlying true functional parameter $f_0$ of interest in the indirect, linear inverse problem \eqref{eq: model}. We derive contraction rate for $\Psi^*$ both in the mildly and severely ill-posed inverse problem case. Furthermore, we consider both  the standard exponential and polynomial spectral structures for the prior, i.e. we assume that the eigenvalues of the prior covariance kernel satisfies either  $\lambda_j\asymp j^{-\alpha}e^{-\xi j^p}$ or $\lambda_j\asymp j^{-1-2\alpha}$ for some $\alpha\geq0,\ \xi>0$.  Finally, in view of \cite{JMLR:v23:21-1128}, we introduce additional assumptions on the covariance kernel of the conditional distribution of $f|\boldsymbol{u}$  ensuring that the variational posterior is not too far from the true posterior in Kullback-Leibler divergence. 

\begin{theorem}\label{th: post rates}
Let's assume that $f_0\in \bar{H}^\beta$ and $\norm{f_j}_{\infty} \lesssim j^\gamma$ for $\beta>0,\gamma\geq0$.
\begin{enumerate}
\item\label{pol decay eig values} In the mildly-ill posed problem where $\kappa_j\asymp j^{-p}$, $p>0$, if $\lambda_j\asymp j^{-1-2\alpha}$ for $\alpha>0$ and $(\alpha \wedge \beta)+p > 3/2 + 2\gamma$, the posterior contracts at the rate $\varepsilon_n^{\text{inv}}=n^{-\frac{\alpha\wedge\beta}{1+2\alpha+2p}}$.
\item\label{exp decay eig values} In the severely ill-posed problem where $\kappa_j\asymp e^{-cj^p}$, $c>0,p\geq1$, if $\lambda_j\asymp j^{-\alpha}e^{-\xi j^p}$ for $\alpha\geq 0$, $\xi>0$, the posterior contracts at the rate $\varepsilon_n^{\text{inv}}=\log^{-\beta/p} n$.
\end{enumerate}
Furthermore, if there exists a constant $C$ independent of $n$ such that
\begin{equation}\label{eq:cond:K-Q}
E_X \norm{K_{\boldsymbol{\mathcal{A}f}\boldsymbol{\mathcal{A}f}}-Q_{\boldsymbol{\mathcal{A}f}\boldsymbol{\mathcal{A}f}}}\leq C,\quad\text{and}\quad E_X Tr\left(K_{\boldsymbol{\mathcal{A}f}\boldsymbol{\mathcal{A}f}}-Q_{\boldsymbol{\mathcal{A}f}\boldsymbol{\mathcal{A}f}}\right) \leq Cn\varepsilon_n^2,
\end{equation}
where $\varepsilon_n = n^{-\frac{\alpha\wedge\beta+p}{1+2\alpha+2p}}$ in \ref{pol decay eig values}., and $\varepsilon_n = n^{-c/(\xi+2c)} \log^{-\beta/p+c\alpha/(\xi+2c)} (n)$ in \ref{exp decay eig values}., $\Psi^*$ contracts around $f_0$ at the rate $\varepsilon_n^{\text{inv}}$ for the mildly and severely ill-posed problems i.e.
\[ E_{f_0}\Psi^*\left[f\colon\ \norm{f-f_0}_{L_2(\mathcal{T}; \mu)} \geq M_n\varepsilon_n^{\text{inv}}\right]\to 0,\quad M_n\to \infty.\]
\end{theorem}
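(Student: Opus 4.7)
The plan is to decompose the proof into two blocks. Block~1 establishes the rate $\varepsilon_n^{\mathrm{inv}}$ for the exact posterior $\Pi[\cdot\mid X,Y]$; Block~2 then transfers this rate to the variational object $\Psi^{*}$ via~\eqref{eq:cond:K-Q}. For Block~1, since the prior eigenfunctions coincide with the eigenfunctions of $\mathcal{A}^{*}\mathcal{A}$, the problem decouples coordinate-wise in the spectral basis $(e_j)$: each mode carries a Gaussian prior of variance $\lambda_j$ and an observation of signal strength $\kappa_j$. I would compute the GP concentration function at $f_0$ in the data seminorm $\|\mathcal{A}(\cdot-f_0)\|_{L_2(G)}$ by the usual recipe---truncate $f_0$ at level $J$ to control the decentering bias, and bound the small-ball via a sum of the form $\sum_{j\leq J}\kappa_j^{2}/\lambda_j$. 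Balancing bias and variance yields $J\asymp n^{1/(1+2\alpha+2p)}$ in the polynomial case and $J\asymp(\log n)^{1/p}$ in the exponential case, producing the ``data-space'' rates $\varepsilon_n$ of the theorem; dividing by $\kappa_J$ gives the corresponding ``signal-space'' rates $\varepsilon_n^{\mathrm{inv}}$. The contraction statement then follows from the standard Ghosal--Ghosh--van~der~Vaart machinery, with the usual Gaussian sieves and likelihood-ratio tests, plus a Bernstein-type inequality to replace the empirical norm $\|\cdot\|_n$ by $\|\cdot\|_{L_2(G)}$ on those sieves.

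For Block~2 I exploit that both $\Pi[\cdot\mid X,Y]$ and every $\Psi\in\mathcal{Q}$ are Gaussian, so the KL divergence between them has a closed form in the finite-dimensional sufficient statistic $\boldsymbol{\mathcal{A}f}$. Writing $K=K_{\boldsymbol{\mathcal{A}f}\boldsymbol{\mathcal{A}f}}$ and $Q=Q_{\boldsymbol{\mathcal{A}f}\boldsymbol{\mathcal{A}f}}$, a direct computation in the spirit of the Titsias bound (adapted to the inverse setting) shows
\[
\kl\bigl(\Psi^{*}\,\|\,\Pi[\cdot\mid X,Y]\bigr)\leq \tfrac{1}{2\sigma^{2}}\,\mathbf{y}^{T}(K+\sigma^{2}I_n)^{-1}(K-Q)(K+\sigma^{2}I_n)^{-1}\mathbf{y}+\tfrac{1}{2\sigma^{2}}\mathrm{Tr}(K-Q).
\]
Averaging over $Y\mid X$ with $E_{f_0}[\mathbf{y}\mathbf{y}^{T}\mid X]=K_{\boldsymbol{\mathcal{A}f_0}\boldsymbol{\mathcal{A}f_0}}+\sigma^{2}I_n$, the quadratic form is controlled by the operator-norm hypothesis in~\eqref{eq:cond:K-Q} while the trace term is exactly the second hypothesis, giving $E_{f_0}\kl(\Psi^{*}\,\|\,\Pi[\cdot\mid X,Y])\lesssim n\varepsilon_n^{2}$.

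With the two blocks in hand, the conclusion follows from the standard variational-to-posterior transfer inequality (see e.g.~\cite{ray2022variational,JMLR:v23:21-1128}): for any set $A_n$,
\[
E_{f_0}\Psi^{*}(A_n)\leq \sqrt{E_{f_0}\Pi[A_n\mid X,Y]}+\frac{E_{f_0}\kl(\Psi^{*}\,\|\,\Pi[\cdot\mid X,Y])+1}{Cn\varepsilon_n^{2}},
\]
and choosing $A_n=\{f:\|f-f_0\|_{L_2(\mathcal{T};\mu)}\geq M_n\varepsilon_n^{\mathrm{inv}}\}$ makes both terms tend to zero. The main obstacle is Block~2: one must control a quadratic form in $\mathbf{y}$ through the inverse of a random $n\times n$ Gram matrix, extract a deterministic bound compatible with~\eqref{eq:cond:K-Q}, and then translate the data-space rate $\varepsilon_n$ (which appears naturally in the KL) into the signal-space rate $\varepsilon_n^{\mathrm{inv}}$ (which appears in the contraction statement) via the ill-posedness factor $\kappa_J$ at the spectral cutoff; the exponent gap between $\varepsilon_n$ and $\varepsilon_n^{\mathrm{inv}}$ in the statement is exactly this factor.
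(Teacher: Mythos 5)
Your overall architecture matches the paper's (contraction for the exact posterior in the data space, transfer to the signal space, a KL bound from \eqref{eq:cond:K-Q}, and the variational transfer inequality of \cite{ray2022variational}), but two steps are glossed over in a way that would not survive being written out. First, the passage from the data-space rate $\varepsilon_n$ to $\varepsilon_n^{\text{inv}}$ is not simply ``dividing by $\kappa_J$ at the spectral cutoff'': posterior draws are not band-limited, so one must restrict to a set $\mathcal{S}_n=\{f:\sum_{j>k_n}\langle f,e_j\rangle^2\le r\rho_n^2\}$ on which the modulus of continuity $\sup\{\|f-f_0\|_{L_2(\mathcal{T};\mu)}: f\in\mathcal{S}_n,\ \|\mathcal{A}f-\mathcal{A}f_0\|_{L_2(\mathcal{X};G)}\le M_n\varepsilon_n\}$ is of order $\kappa_{k_n}^{-1}\varepsilon_n+\rho_n+k_n^{-\beta}$ (the Knapik--Salomond argument), and then prove $E_{f_0}\Pi[\mathcal{S}_n^c\mid X,Y]\lesssim e^{-cM_n^2n\varepsilon_n^2}$ via a chi-squared tail bound on the prior mass of $\mathcal{S}_n^c$ combined with an evidence lower bound. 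In the severely ill-posed case this tail bound has to be redone from scratch (the paper carries out the Chernoff computation for $\sum_{j>k_n}\lambda_jZ_j^2$ with exponentially decaying $\lambda_j$), so this is a genuine piece of work, not bookkeeping. Relatedly, the empirical-to-population norm equivalence cannot be obtained by a Bernstein inequality over the whole support of the prior; it holds only on a sieve of functions with small sup-norm tails, and this is exactly where the hypothesis $(\alpha\wedge\beta)+p>3/2+2\gamma$ and the bound $\|g_j\|_\infty\lesssim j^\gamma$ enter.

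Second, and more seriously, your Block~2 as stated fails. After averaging over $Y\mid X$ the quadratic form becomes $\boldsymbol{\mathcal{A}f_0}^T\bigl[(\sigma^2I_n+Q)^{-1}-(\sigma^2I_n+K)^{-1}\bigr]\boldsymbol{\mathcal{A}f_0}$ plus a trace term, and bounding it by $\sigma^{-4}\|K-Q\|\,\|\boldsymbol{\mathcal{A}f_0}\|_2^2$ gives order $n\|K-Q\|\asymp n$, not $n\varepsilon_n^2$: the hypothesis $E_X\|K-Q\|\le C$ only asserts boundedness, not smallness, so ``controlled by the operator-norm hypothesis'' does not close the argument. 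The paper instead invokes Lemma~3 of \cite{JMLR:v23:21-1128}, which decomposes $\mathcal{A}f_0=h+(\mathcal{A}f_0-h)$ for an RKHS element $h$ and yields
\[
E_{f_0}\operatorname{KL}\bigl(\Psi^*\,\|\,\Pi[\cdot\mid X,Y]\bigr)\lesssim n\|\mathcal{A}f_0-h\|^2_{L_2(\mathcal{X};G)}+\|h\|^2_{\mathbb{H}_{\mathcal{A}}}\,E_X\|K-Q\|+E_X\operatorname{Tr}(K-Q);
\]
the point is that for $h\in\mathbb{H}_{\mathcal{A}}$ the vector $\mathbf{h}$ lies in the range of $K^{1/2}$ with $\|K^{-1/2}\mathbf{h}\|^2\le\|h\|^2_{\mathbb{H}_{\mathcal{A}}}$, which converts the dangerous factor $\|\boldsymbol{\mathcal{A}f_0}\|_2^2\asymp n$ into $\|h\|^2_{\mathbb{H}_{\mathcal{A}}}\lesssim n\varepsilon_n^2$, while the residual contributes $n\|\mathcal{A}f_0-h\|^2\lesssim n\varepsilon_n^2$. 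Conveniently, the required $h$ is exactly the one constructed for the concentration-function bound in Block~1, so no new estimate is needed --- but without this decomposition the KL bound, and hence the theorem, does not follow from \eqref{eq:cond:K-Q}. Finally, in your transfer inequality the denominator must be $M_n^2n\varepsilon_n^2$ (coming from the exponential bound $e^{-cM_n^2n\varepsilon_n^2}$ on the posterior probability), not $Cn\varepsilon_n^2$; since the KL bound is only $O(n\varepsilon_n^2)$ and not $o(n\varepsilon_n^2)$, the extra factor $M_n^2\to\infty$ is what makes the final expression vanish.
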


\begin{proof}
We provide the sketch of the proof here, the detailed derivation of the theorem is deferred to the supplementary material. In a first step, we derive posterior contraction rates around $\mathcal{A}f_0$ in empirical $L_2$-norm under fixed design. In particular, we obtain an exponential decay of the probability expectation in the form
\begin{equation}
E_{Y\given X}\Pi\Big[f\colon\ n^{-1}\sum\nolimits_{i=1}^n \big(\mathcal{A}f-\mathcal{A}f_0\big)^2(x_i) \geq M_n\varepsilon_n^2\given\ X, Y\Big]\mathds{1}_{A_n}\leq Ce^{-cM_n^2n\varepsilon_n^2},\label{eq:contraction:direct}
\end{equation}
for arbitrary $M_n\to \infty$, where $\varepsilon_n=n^{-\frac{\alpha\wedge\beta+p}{1+2\alpha+2p}}$ in the mildly and $\varepsilon_n=n^{-\frac{c}{\xi+2c}}\log^{-\frac{\beta}{p}+\frac{c\alpha}{\xi+2c}} n$ in the severely ill-posed problems and $A_n$ is an event on the sample space $\mathbb{R}^n$ with probability tending to one asymptotically. In the mildly ill-posed case, this follows from results in \cite{MR2332274, aadharry08}, while additional care is needed in the severely ill-posed case. As a second step, we go back to the random design setting. We show, using concentration inequalities and controlling the tail probability of GPs in the spectral decomposition, that the empirical and population $L_2$-norms are equivalent on a large enough event. This implies 
 contraction rate with respect to the $\|\cdot\|_{L^{2}(\mathcal{X},G)}$-norm around $\mathcal{A}f_0$, similarly to \eqref{eq:contraction:direct}.  In the third step, using the previous result on the forward map, we derive contraction rates around $f_0$. To achieve this we apply the modulus of continuity techniques introduced in \cite{knapik:salomond:2018}. Notably, we extend their ideas to infinite Gaussian series priors in the severely ill-posed case as well. Since in all these steps we can preserve the exponential upper bound for the posterior contraction (on a large enough event), we can apply Theorem 5 of \cite{ray2022variational}, resulting in contraction rates for the VB procedure. It requires a control of the expected KL divergence between these two distributions, which follows from our assumptions on the expected trace and spectral norm of the covariance matrix of $\boldsymbol{\mathcal{A}f}|\boldsymbol{u}$, see Lemma 3 in \cite{JMLR:v23:21-1128} for the identity operator $\mathcal{A}$.

\end{proof}

We briefly discuss the above results. First of all, the $L_2(\mathcal{T}; \mu)$-contraction rate of the true posterior, to the best of our knowledge, wasn't derived explicitly in the literature before, hence it is of interest in its own right. Nevertheless, the main message is that the variational posterior achieves the same contraction rate as the true posterior under the assumption \eqref{eq:cond:K-Q}. Note that in the mildly ill-posed inverse problem case for eigenvalues $\lambda_j\asymp j^{-1-2\beta}$ (i.e. taking $\alpha=\beta$), the posterior contracts with the minimax rate $n^{-\beta/(1+2\beta+2p)}$. Note that the $d$--dimensional case directly follows from this result when one defines the regularity class \eqref{eq: sobolev class} and ill-posedness (Definition \ref{def: illposedness}) with $\beta/d$ and $p/d$ which would imply the rate $n^{-\beta/(d+2\beta+2p)}$. Similarly in the severely-ill posed case one can achieve the minimax logarithmic contraction rate. Furthermore, the choice of the eigenvalue structure in the theorem was done for computational convenience, the results can be generalised for other choices of $\lambda_j$ as well. Though we considered the random variables $u$ fixed as we do not optimize them above, they could conceivably be considered as free variational parameters and selected at the same time as $\boldsymbol{\mu}_u$ and $\Sigma_u$. 

In the next subsection we consider two specific choices of the inducing variables, i.e. the population spectral feature method and its empirical counter part. We show that under sufficient condition on the number of inducing variables condition \eqref{eq:cond:K-Q} is satisfied implying the contraction rate results derived in the preceding theorem.

\subsection{Population and empirical spectral features methods}\label{sec:spectral:feature}
We focus here on two inducing variables methods, based on the spectral features (i.e. eigenspectrum) of the empirical covariance matrix $K_{\boldsymbol{\mathcal{A}f}\boldsymbol{\mathcal{A}f}}=E_{\Pi}\mathbf{\mathcal{A}f}\mathbf{\mathcal{A}f}^T$ and the corresponding population level covariance operator $(x,y)\mapsto E_{\Pi}\mathcal{A}f(x)\mathcal{A}f(y)$. 

We start with the former method and consider inducing variables of the form
\begin{equation}\label{eq: inducing var matrix}
 u_j = \sum\nolimits_{i=1}^n v_j^i \mathcal{A}f(x_i),\quad j=1,\dots,m,
 \end{equation}
 where $\mathbf{v}_j=(v_j^1,\dots,v_j^n)$ is the eigenvector of $K_{\boldsymbol{\mathcal{A}f}\boldsymbol{\mathcal{A}f}}$ corresponding to the $j$th largest eigenvalue $\rho_j$ of this matrix. Similarly to the direct problem studied in \cite{pmlr-v97-burt19a, JMLR:v23:21-1128}, this results in $\left(K_{\boldsymbol{\mathcal{A}f}\boldsymbol{\mathcal{A}f}}\right)_{ij}=\rho_j \delta_{ij}$, $\left(K_{\boldsymbol{\mathcal{A}f}\boldsymbol{u}}\right)_{ij}=\rho_j v_j^i$, $Q_{\boldsymbol{\mathcal{A}f}\boldsymbol{\mathcal{A}f}}=\sum_{j=1}^m\rho_j\mathbf{v}_j\mathbf{v}_j^T$, and $K_{\boldsymbol{\mathcal{A}f}\boldsymbol{\mathcal{A}f}}-Q_{\boldsymbol{\mathcal{A}f}\boldsymbol{\mathcal{A}f}}=\sum_{j=m+1}^n\rho_j\mathbf{v}_j\mathbf{v}_j^T$. The computational complexity of deriving the first $m$ eigenvectors of  $K_{\boldsymbol{\mathcal{A}f}\boldsymbol{\mathcal{A}f}}$ is $\mathcal{O}(n^2m)$. This is still quadratic in $n$, which sets limitations to its practical applicability, but it can be computed for arbitrary choices of the prior covariance operator and map $\mathcal{A}$. We also note that this choice gives the optimal rank--$m$ approximation $Q_{\boldsymbol{\mathcal{A}f}\boldsymbol{\mathcal{A}f}}$ of $K_{\boldsymbol{\mathcal{A}f}\boldsymbol{\mathcal{A}f}}$ and it was noted in \cite{pmlr-v97-burt19a} that it gives the minimiser of the trace and norm terms in \eqref{eq:cond:K-Q}.

The second inducing variables method is based on the eigendecomposition of covariance kernel $(x,y)\mapsto E_{\Pi}\mathcal{A}f(x)\mathcal{A}f(y)$. Let us consider the variables
\begin{equation}\label{eq: inducing var op}
 u_j = \int_\mathcal{X} \mathcal{A}f(x)e_j(x)dG(x),\quad j=1,\dots,m.
 \end{equation}
Again, by extending the results derived in the direct problem \cite{pmlr-v97-burt19a} to the inverse setting, this results in $\left(K_{\boldsymbol{\mathcal{A}f}\boldsymbol{\mathcal{A}f}}\right)_{ij}=\lambda_j\kappa_j \delta_{ij}$, $\left(K_{\boldsymbol{\mathcal{A}f}\boldsymbol{u}}\right)_{ij}=\lambda_j\kappa_j \boldsymbol{\phi}_j^i$, $Q_{\boldsymbol{\mathcal{A}f}\boldsymbol{\mathcal{A}f}}=\sum_{j=1}^m\lambda_j\kappa_j \boldsymbol{\phi}_j\boldsymbol{\phi}_j^T$, and $K_{\boldsymbol{\mathcal{A}f}\boldsymbol{\mathcal{A}f}}-Q_{\boldsymbol{\mathcal{A}f}\boldsymbol{\mathcal{A}f}}=\sum_{j=m+1}^n\lambda_j\kappa_j\boldsymbol{\phi}_j\boldsymbol{\phi}_j^T$, where $\boldsymbol{\phi}_j=\left(\phi_j(x_1),\dots,\phi_j(x_n)\right)^T$. The computational complexity of this method is $O(nm^2)$, which is substantially faster than its empirical counter part. However, it requires the exact knowledge of the eigenfunctions of the prior covariance kernel, and therefore in general has limited practical applicability.

\begin{corollary}\label{lemma: number of inducing variables}
Let's assume that $f_0\in \bar{H}^\beta$, $\norm{g_j}_{\infty} \lesssim j^\gamma$ for $\beta>0,\gamma\geq0$ and in the
\begin{enumerate}
\item\label{pol decay eig values 2}\label{mild case} mildly-ill posed case $\kappa_j\asymp j^{-p}$: take prior eigenvalues $\lambda_j\asymp j^{-1-2\alpha}$ for some $\alpha>0$, $(\alpha \wedge \beta)+p > 3/2 + 2\gamma$, number of inducing variables $m_n\geq  n^{\frac{1}{(1+2p+2\alpha)}}$ and denote by $\varepsilon_n^{\text{inv}}=n^{-\frac{\alpha\wedge \beta}{1+2\alpha+2p}}$.
\item\label{exp decay eig values 2} severely ill-posed case $\kappa_j\asymp e^{-cj^p}$: take prior eigenvalues $\lambda_j\asymp j^{-\alpha}e^{-\xi j^p}$ for $\alpha\geq 0$, $\xi>0$,  number of inducing variables  $m_n^p\geq \big(\xi+2c\big)^{-1}\log n$, and introduce the notation $\varepsilon_n^{\text{inv}}=\log^{-\beta/p} n$.
\end{enumerate}
Then both for the population (if $\gamma=0$ in \ref{mild case}.) and empirical spectral features variational methods the corresponding variational posterior distribution contracts around the truth with the rate $\varepsilon_n^{\text{inv}}$, i.e.
\[ E_{f_0}\Psi^*\left[f\colon\ \norm{f-f_0}_{L_2(T; \mu)} \geq M_n\varepsilon_n^{\text{inv}}\right]\to 0,\quad M_n\to \infty.\]
\end{corollary}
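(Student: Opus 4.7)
The strategy is to invoke Theorem \ref{th: post rates} by verifying its sufficient condition \eqref{eq:cond:K-Q} for both spectral-features choices under the stated lower bounds on $m_n$; once the two matrix-norm bounds on $K_{\boldsymbol{\mathcal{A}f}\boldsymbol{\mathcal{A}f}}-Q_{\boldsymbol{\mathcal{A}f}\boldsymbol{\mathcal{A}f}}$ are established, the claimed contraction of $\Psi^{*}$ at rate $\varepsilon_n^{\text{inv}}$ is immediate from the theorem. Thus the entire proof reduces to a purely linear-algebraic verification of the expected operator-norm and trace of $K_{\boldsymbol{\mathcal{A}f}\boldsymbol{\mathcal{A}f}}-Q_{\boldsymbol{\mathcal{A}f}\boldsymbol{\mathcal{A}f}}$ for each inducing-variable choice.

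I would first handle the population spectral-features method, where a direct computation using $\mathcal{A}e_j=\kappa_j g_j$ and the eigen-expansion \eqref{eq: kernel decomposition} yields $K_{\boldsymbol{\mathcal{A}f}\boldsymbol{\mathcal{A}f}}-Q_{\boldsymbol{\mathcal{A}f}\boldsymbol{\mathcal{A}f}}=\sum_{j>m}\lambda_j\kappa_j^2\,\boldsymbol{\phi}_j\boldsymbol{\phi}_j^T$. The trace condition is then immediate: since the $g_j$ are orthonormal in $L_2(\mathcal{X};G)$ one has $E_X\|\boldsymbol{\phi}_j\|^2=n$, hence $E_X\,\text{Tr}(K_{\boldsymbol{\mathcal{A}f}\boldsymbol{\mathcal{A}f}}-Q_{\boldsymbol{\mathcal{A}f}\boldsymbol{\mathcal{A}f}})=n\sum_{j>m}\lambda_j\kappa_j^2$. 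In the mildly ill-posed case this is of order $n m^{-2\alpha-2p}$, and the choice $m_n\geq n^{1/(1+2p+2\alpha)}$ balances it precisely against $n\varepsilon_n^2$ with the $\varepsilon_n$ of Theorem \ref{th: post rates}; in the severely ill-posed case $\lambda_j\kappa_j^2\asymp j^{-\alpha}e^{-(\xi+2c)j^p}$, so $m_n^p\geq(\xi+2c)^{-1}\log n$ forces the tail into the required range. For the empirical method, $K_{\boldsymbol{\mathcal{A}f}\boldsymbol{\mathcal{A}f}}-Q_{\boldsymbol{\mathcal{A}f}\boldsymbol{\mathcal{A}f}}=\sum_{j>m}\rho_j\mathbf{v}_j\mathbf{v}_j^T$, and the Eckart--Young--Mirsky theorem guarantees that the resulting trace error $\sum_{j>m}\rho_j$ is dominated by the population trace error, so the same bound applies without the restriction $\gamma=0$.

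The main obstacle is the operator-norm condition, since the trivial inequality $\|\cdot\|\leq\text{Tr}(\cdot)$ is off by a factor of $n$. For the population method I would exploit the factorisation $K_{\boldsymbol{\mathcal{A}f}\boldsymbol{\mathcal{A}f}}-Q_{\boldsymbol{\mathcal{A}f}\boldsymbol{\mathcal{A}f}}=\Phi_{>m}D_{>m}\Phi_{>m}^T$, with $D_{>m}=\mathrm{diag}(\lambda_j\kappa_j^2)_{j>m}$ and $\Phi_{>m}$ the $n\times\infty$ matrix of columns $\boldsymbol{\phi}_j$, together with $\|A B A^T\|\leq\|A\|^2\|B\|$, to obtain $\|K_{\boldsymbol{\mathcal{A}f}\boldsymbol{\mathcal{A}f}}-Q_{\boldsymbol{\mathcal{A}f}\boldsymbol{\mathcal{A}f}}\|\leq\lambda_{m+1}\kappa_{m+1}^2\,\|\Phi_{>m}^T\Phi_{>m}\|$; the uniform bound $\|g_j\|_\infty\leq C$ (i.e.\ $\gamma=0$) then permits a matrix-Bernstein or Rudelson-type concentration argument showing that the empirical Gram matrix $\Phi_{>m}^T\Phi_{>m}$ has operator norm of order $n$ in expectation, leading to $E_X\|K_{\boldsymbol{\mathcal{A}f}\boldsymbol{\mathcal{A}f}}-Q_{\boldsymbol{\mathcal{A}f}\boldsymbol{\mathcal{A}f}}\|\lesssim n\lambda_{m+1}\kappa_{m+1}^2$, which under the assumed $m_n$ is of order one in the mild case and vanishing in the severe case. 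For the empirical method, Eckart--Young--Mirsky again gives $\|K_{\boldsymbol{\mathcal{A}f}\boldsymbol{\mathcal{A}f}}-Q_{\boldsymbol{\mathcal{A}f}\boldsymbol{\mathcal{A}f}}\|=\rho_{m+1}$, which is controlled by the concentration of the spectrum of $K_{\boldsymbol{\mathcal{A}f}\boldsymbol{\mathcal{A}f}}$ around that of the integral operator with kernel $\sum_j\lambda_j\kappa_j^2\,g_j\otimes g_j$; here the full $\|g_j\|_\infty\lesssim j^\gamma$ hypothesis suffices, removing the $\gamma=0$ restriction. Plugging the resulting bounds back into Theorem \ref{th: post rates} delivers the claimed variational contraction rate $\varepsilon_n^{\text{inv}}$ for both methods.
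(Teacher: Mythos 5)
Your overall strategy coincides with the paper's: reduce everything to verifying condition \eqref{eq:cond:K-Q} and then invoke Theorem \ref{th: post rates}. Your trace bounds are correct (for the population method $E_X\,\mathrm{Tr}(K_{\boldsymbol{\mathcal{A}f}\boldsymbol{\mathcal{A}f}}-Q_{\boldsymbol{\mathcal{A}f}\boldsymbol{\mathcal{A}f}})=n\sum_{j>m}\lambda_j\kappa_j^2$, and the Eckart--Young argument transfers this to the empirical method), and the severely ill-posed case is indeed settled by the crude bound $\norm{\cdot}\leq \mathrm{Tr}(\cdot)\leq n\sum_{j>m}\lambda_j\kappa_j^2\lesssim m^{-\alpha}$, which is how the paper handles it (via Proposition 2 of Shawe-Taylor et al.).

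However, there is a genuine gap in your operator-norm bound for the population method in the mildly ill-posed case. You write $K_{\boldsymbol{\mathcal{A}f}\boldsymbol{\mathcal{A}f}}-Q_{\boldsymbol{\mathcal{A}f}\boldsymbol{\mathcal{A}f}}=\Phi_{>m}D_{>m}\Phi_{>m}^T$ and bound the norm by $\lambda_{m+1}\kappa_{m+1}^2\,\norm{\Phi_{>m}^T\Phi_{>m}}$, claiming the unweighted Gram matrix has operator norm of order $n$. It does not: $\norm{\Phi_{>m}^T\Phi_{>m}}=\norm{\sum_{j>m}\boldsymbol{\phi}_j\boldsymbol{\phi}_j^T}$, and already the trace $\sum_{j>m}\sum_{i=1}^n g_j(x_i)^2$ diverges for an orthonormal system with $\|g_j\|_\infty$ bounded away from zero (e.g.\ trigonometric), so $\norm{\Phi_{>m}}=\infty$ and the factorization $\norm{ABA^T}\leq\norm{A}^2\norm{B}$ yields nothing. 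The eigenvalue weights must stay attached to the features. The paper instead cites Lemma 5 of \cite{JMLR:v23:21-1128} applied to the induced prior with eigenvalues $\lambda_j\kappa_j^2$; that lemma splits the tail into a moderate index range, treated by a Rudelson-type concentration bound for the \emph{weighted} Gram matrix, and a far tail treated by the sup-norm bound, which is why the resulting estimate carries the extra term $n^{1/(2(\alpha+p))}m^{-2(\alpha+p)}\log n$ that must separately be checked to be $O(1)$ under $m_n\geq n^{1/(1+2\alpha+2p)}$ and $\alpha+p>3/2$. Relatedly, for the empirical method with general $\gamma>0$ your Eckart--Young comparison $\rho_{m+1}\leq\norm{K_{\boldsymbol{\mathcal{A}f}\boldsymbol{\mathcal{A}f}}-Q^{\mathrm{pop}}_{\boldsymbol{\mathcal{A}f}\boldsymbol{\mathcal{A}f}}}$ is unavailable (it presupposes the population bound, which you only have for $\gamma=0$), and the appeal to ``concentration of the spectrum'' is left unsubstantiated; the paper closes this by citing Lemma 4 of \cite{JMLR:v23:21-1128}, which bounds the expected empirical eigenvalues $E_X\rho_{m+1}$ and $E_X\sum_{j>m}\rho_j$ directly for polynomially decaying kernel eigenvalues.
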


\begin{remark}
In the mildly ill-posed inverse problem taking $\alpha=\beta$ results in the minimax contraction rate for $m_n\geq  n^{\frac{1}{1+2p+2\alpha}}$. Note that it is substantially less compared to the direct problem with $p=0$, hence the computation is even faster in the inverse problem case.
\end{remark}


\section{Examples}\label{sec:examples}
In this section we provide three specific linear inverse problems as examples. The Volterra (integral) operator and the Radon transformations are mildly ill-posed, while the heat-equation is a severely ill-posed inverse problem. We show that in all cases by optimally tunning the GP prior and including enough inducing variables, the variational approximation of the posterior provides (from a minimax perspective) optimal recovery of the underlying signal $f_0$.
\subsection{Volterra operator}\label{subsec: volterra}

First, let us consider the Volterra operator, $\mathcal{A}:  L_2[0,1]\longrightarrow L_2[0,1]$ satisfying that
\begin{equation}\label{eq: volterra operator} \mathcal{A}f(x) = \int_0^x f(s)ds,\quad \mathcal{A}^*f(x) = \int_x^1 f(s)ds.\end{equation}
The eigenvalues of $\mathcal{A}^*\mathcal{A}$ and the corresponding eigenbases are given by $ \kappa^2_j = (j-1/2)^{-2}\pi^{-2},\ e_j(x)=\sqrt{2} \cos\left((j-1/2)\pi x\right),\ g_j(x)=\sqrt{2} \sin\left((j-1/2)\pi x\right)$ respectively,
see \cite{halmos2012hilbert}. Therefore the problem is mildly ill-posed with degree $p=1$ and these bases are uniformly bounded, i.e. $\sup_j \|e_j\|_{\infty}\vee \|g_j\|_{\infty}<\infty$. The following lemma is then a direct application of Corollary \ref{lemma: number of inducing variables}.

\begin{corollary}\label{cor: volterra}
Consider the Volterra operator in \eqref{eq: model} and assume that $f_0\in \bar{H}^\beta$, for some $\beta>1/2$. Set the eigenvalues in \eqref{eq: kernel decomposition} as $\lambda_j = j^{-1-2\beta}$. Then the variational posterior $\Psi^*$ resulting from either the empirical or population spectral features inducing variable methods achieves the minimax contraction rate if the number of inducing variables exceeds $m_n\gtrsim n^{\frac{1}{3+2\beta}}$, i.e. for arbitrary $ M_n\to\infty$
\[ E_{f_0} \Psi^*\left[\norm{f-f_0}_{L_2[0,1]} \geq M_n n^{-\beta/(3+2\beta)}\right] \to 0.\]
\end{corollary}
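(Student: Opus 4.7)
The plan is to invoke Corollary \ref{lemma: number of inducing variables} in its mildly ill-posed form (case \ref{pol decay eig values 2}) after matching all problem-specific constants dictated by the Volterra operator. First I would read off from the explicit spectral decomposition given just above the corollary that $\kappa_j^2=(j-1/2)^{-2}\pi^{-2}\asymp j^{-2}$, so the degree of ill-posedness is $p=1$. Since both $e_j(x)=\sqrt{2}\cos((j-1/2)\pi x)$ and $g_j(x)=\sqrt{2}\sin((j-1/2)\pi x)$ are uniformly bounded by $\sqrt{2}$, the supremum-norm growth exponent in the hypotheses of the corollary can be taken as $\gamma=0$, which also verifies the extra requirement imposed for the population spectral features method.

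Next I would align the prior with the truth by setting the decay index $\alpha=\beta$ in the general prior eigenvalue prescription $\lambda_j\asymp j^{-1-2\alpha}$, so that the choice $\lambda_j=j^{-1-2\beta}$ from the statement matches the corollary. The regularity condition $(\alpha\wedge\beta)+p>3/2+2\gamma$ reduces, under these identifications, to $\beta+1>3/2$, i.e.\ $\beta>1/2$, which is exactly the hypothesis assumed. The threshold on the number of inducing variables becomes $n^{1/(1+2p+2\alpha)}=n^{1/(3+2\beta)}$, matching the condition $m_n\gtrsim n^{1/(3+2\beta)}$ in the statement.

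Finally, plugging $\alpha=\beta$, $p=1$ into the general rate $\varepsilon_n^{\mathrm{inv}}=n^{-(\alpha\wedge\beta)/(1+2\alpha+2p)}$ produces exactly $n^{-\beta/(3+2\beta)}$, which is known to be the minimax rate over $\bar{H}^\beta$ for the Volterra problem. Applying Corollary \ref{lemma: number of inducing variables} therefore yields
\[ E_{f_0}\Psi^*\bigl[\|f-f_0\|_{L_2[0,1]}\ge M_n n^{-\beta/(3+2\beta)}\bigr]\to 0\]
for any $M_n\to\infty$, both for the population and empirical spectral features variational methods. Since the proof reduces entirely to a parameter-matching check, there is no real obstacle; the only point requiring a brief verification is that the uniform boundedness of the sine/cosine eigenbasis indeed licenses $\gamma=0$, which in turn is what makes the extra restriction on the population method vacuous here.
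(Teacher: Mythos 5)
Your proposal is correct and matches the paper's own argument, which simply states that the result is a direct application of Corollary \ref{lemma: number of inducing variables}: you correctly identify $p=1$ from $\kappa_j^2=(j-1/2)^{-2}\pi^{-2}$, take $\gamma=0$ from the uniform boundedness of the sine/cosine bases, set $\alpha=\beta$, and verify that the regularity condition reduces to $\beta>1/2$ while the inducing-variable threshold and rate become $n^{1/(3+2\beta)}$ and $n^{-\beta/(3+2\beta)}$ respectively. Nothing is missing.
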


\subsection{Heat equation}\label{subsec: heat}
Next let us consider the problem of recovering the initial condition for the heat equation. The heat equation is often considered as the starting example in the PDE literature and, for instance, the Black-Scholes PDE can be converted to the heat equation as well. We consider the Dirichlet boundary condition
\begin{equation}\label{eq: heat equation} \frac{\partial}{\partial t}u(x,t)= \frac{\partial^2}{\partial x^2}u(x,t),\quad u(x,0)=\mu(x),\quad u(0,t)=u(1,t)=0,\end{equation}
for $u$ defined on $\left[0,1\right]\times\left[0,T\right]$, $T>0$. For $\mu\in L_2[0,1]$, 
$ u(x,t)=\sqrt{2}\sum_{j=1}^\infty \mu_j e^{-j^2\pi^2 t}\sin(j\pi x),$
with $\mu_j=\sqrt{2}\int_0^1 \mu(s)\sin(j\pi s)ds$. Therefore, if $\mathcal{A}: \mathcal{D}\mapsto \mathcal{D}$, with $\mathcal{D}\coloneqq \left\{f\in L_2[0,1],\ f(0)=f(1)=0\right\}$, is such that, for $\mu=f$, $\mathcal{A}f(x) = u(x,T)$, then the corresponding singular-values and singular-functions of the operator $\mathcal{A}$ are $\kappa_j=e^{-j^2\pi^2 T}$ and $e_j(x)=g_j(x)=\sqrt{2}\sin(j\pi x)$. Therefore it is a severely ill-posed problem with $p=2$ and $c=\pi^2 T$. We also note that $\sup_{j}\|e_j\|_{\infty}\vee \|g_j\|_{\infty}<\infty$. This problem has been well studied both in the frequentist \cite{bissantz2008statistical,golubev1999statistical,mair1996statistical} and Bayesian setting \cite{knapik2013bayesian,stuart2010inverse}. Then, by direct application of Corollary \ref{lemma: number of inducing variables} we can provide optimality guarantees for the variational Bayes procedure in this model as well.

\begin{corollary}\label{cor:heat}
Consider the heat equation operator $\mathcal{A}$ as above in the linear inverse regression model \eqref{eq: model} and assume that $f_0\in \bar{H}^\beta$ for some $\beta>0$. Furthermore, we set the eigenvalues  $\lambda_j =  j^{-\alpha}e^{-\xi j^2}$, $\alpha\geq0$, $\xi>0$ in \eqref{eq: kernel decomposition}. Then the variational approximation $\Psi^*$ resulting from either of the spectral features inducing variables method with $m_n\geq \left(\xi + \pi^2 T \right)^{-1/2}\log^{1/2} n$ achieves the minimax contraction rate, i.e. for arbitrary $M_n\to\infty$
\[ E_{f_0} \Psi^*\left[\norm{f-f_0}_{L_2([0,1])} \geq M_n \log^{-\beta/2} n\right] \to 0.\]
\end{corollary}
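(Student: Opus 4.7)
The plan is to recognize that Corollary \ref{cor:heat} is a direct instance of the severely ill-posed case of Corollary \ref{lemma: number of inducing variables}, so the task is essentially to verify that the three structural hypotheses of that corollary (ill-posedness, prior eigenvalue decay, uniform boundedness of eigenfunctions) hold for the heat equation setup and to read off the resulting rate.

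First I would spell out the singular value decomposition of $\mathcal{A}$ for the heat equation. Using the separation-of-variables expansion $u(x,T)=\sqrt{2}\sum_j \mu_j e^{-j^2\pi^2 T}\sin(j\pi x)$ already recorded in the text, the operator $\mathcal{A}^*\mathcal{A}$ has eigenvalues $\kappa_j^2=e^{-2j^2\pi^2 T}$ with eigenfunctions $e_j(x)=\sqrt{2}\sin(j\pi x)$, and also $g_j=e_j$. Hence the problem is severely ill-posed with $p=2$ and $c=\pi^2 T$ in Definition \ref{def: illposedness}. Moreover $\|e_j\|_\infty = \|g_j\|_\infty = \sqrt{2}$, so the uniform boundedness condition $\|g_j\|_\infty \lesssim j^\gamma$ of Corollary \ref{lemma: number of inducing variables} holds with $\gamma=0$.

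Second, the prescribed prior eigenvalues $\lambda_j = j^{-\alpha} e^{-\xi j^2}$ in \eqref{eq: kernel decomposition} coincide exactly with the form $\lambda_j \asymp j^{-\alpha} e^{-\xi j^p}$ required in item \ref{exp decay eig values 2} of Corollary \ref{lemma: number of inducing variables} with $p=2$. Substituting $p=2$ and $c=\pi^2 T$ into the condition $m_n^p \geq (\xi+2c)^{-1}\log n$ yields $m_n^2 \gtrsim (\xi+2\pi^2 T)^{-1}\log n$, which is (up to a universal constant) the stated assumption $m_n \geq (\xi+\pi^2 T)^{-1/2}\log^{1/2}n$. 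The resulting contraction rate $\varepsilon_n^{\text{inv}} = \log^{-\beta/p} n = \log^{-\beta/2} n$ is exactly the rate displayed in the conclusion.

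All that remains is to apply Corollary \ref{lemma: number of inducing variables} directly: since both the population and empirical spectral features methods are covered there (and the restriction $\gamma=0$ in item \ref{mild case} of that corollary is irrelevant here, as it only concerns the mildly ill-posed case while we are in the severely ill-posed regime), the conclusion follows. I do not anticipate a substantive obstacle: the only thing one needs to be careful about is bookkeeping of the constant in the lower bound on $m_n$, since the logarithm factor is insensitive to multiplicative constants and the contraction statement is unaffected by replacing the threshold $(\xi+2\pi^2T)^{-1/2}$ by the marginally different $(\xi+\pi^2T)^{-1/2}$. The heavy lifting, namely verifying \eqref{eq:cond:K-Q} for the two spectral-feature constructions under this choice of $m_n$, is already done inside the proof of Corollary \ref{lemma: number of inducing variables}.
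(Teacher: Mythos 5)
Your proposal is correct and matches the paper's own argument: the paper likewise identifies the heat-equation SVD ($\kappa_j=e^{-j^2\pi^2T}$, $e_j=g_j=\sqrt{2}\sin(j\pi x)$, hence $p=2$, $c=\pi^2T$, $\gamma=0$) and then invokes the severely ill-posed case of Corollary \ref{lemma: number of inducing variables}. Your remark on the threshold constant is also right in the needed direction, since $(\xi+\pi^2T)^{-1/2}\geq(\xi+2\pi^2T)^{-1/2}$, so the stated condition on $m_n$ implies the one required by that corollary.
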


\subsection{Radon transform}\label{subsec: radon}

Finally, we consider the Radon transform \cite{MR1041393}, where for some (Lebesgue)--square-integrable function $f\ \colon\ D \to \mathbb{R}$ defined on the unit disc $D=\left\{x\in \mathbb{R}^2:\ \norm{x}_2\leq 1\right\}$, we observe its integrals along any line intersecting $D$. If we parameterized the lines by the length $s\in[0,1]$ of their perpendicular from the origin and the angle $\phi\in[0.2\pi)$ of the perpendicular to the x-axis, we observe
\begin{equation}\label{eq: Radon transform}
 \mathcal{A}f(s,\phi)=\frac{\pi}{2\sqrt{1-s^2}}\int_{-\sqrt{1-s^2}}^{\sqrt{1-s^2}} f(s\cos\phi -t\sin\phi, s\sin\phi+t\cos\phi)dt,
 \end{equation}
where $(s,\phi)\in S=[0,1]\times [0,2\pi)$. The Radon transform is then a map from $\mathcal{A}\colon L_2(D;\mu)\to L_2(S;G)$, where $\mu$ is $\pi^{-1}$ times the Lebesgue measure and $dG(s,\phi)=2\pi^{-1}\sqrt{1-s^2}dsd\phi$. Then $\mathcal{A}$ is a bijective, mildly ill-posed  linear operator of order $p=1/4$. Furthermore, the operator's singular value decomposition can be computed via Zernike polynomials $Z_m^k$ (degree $m$, order $k$) and Chebyshev polynomials of the second kind $U_m(\cos \theta)= \sin\left((m+1)\theta\right)/\sin \theta\leq m+1$, see \cite{MR1041393}. Translating it to the single index setting, we get  for some functions $l,m: \mathbb{N}\mapsto \mathbb{N}$ satisfying $m(j)\asymp \sqrt{j}$ and $|l(j)|\leq m(j)$, that
\[ e_j(r,\theta)=\sqrt{m(j)+1}Z_{m(j)}^{|l(j)|} e^{j l(j)\theta}, \quad g_j(s,\phi)=U_{m(j)}(s) e^{j l(j)\phi},\]
if polar coordinates are used on $D$. Therefore, we have $\sup_{j} (\|e_j\|_{\infty}\vee \|g_j\|_{\infty})/\sqrt{j}<\infty$. Then, by directly applying Corollary \ref{lemma: number of inducing variables} to this setting we can show that the variational Bayes method achieves the minimax contraction rate.
\begin{corollary}
Consider the Radon transform operator \eqref{eq: Radon transform} in the inverse regression model \eqref{eq: model} and let us take $f_0\in \bar{H}^\beta$, $\beta>9/4$. Taking polynomially decaying eigenvalues $\lambda_j\asymp j^{-1-2\beta}$, the empirical spectral features variational Bayes method achieves the optimal minimax contraction rate if $m_n\gtrsim n^{1/(3/2+2\beta)}$, i.e. for any $M_n\to\infty$
\[ E_{f_0} \Psi^*\left[\norm{f-f_0}_{L_2(D;\mu)} \geq M_n n^{-\beta/(3/2+2\beta)}\right] \to 0.\]
\end{corollary}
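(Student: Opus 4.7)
The plan is to verify that the Radon transform fits into the framework of Corollary \ref{lemma: number of inducing variables}, part \ref{pol decay eig values 2}, and then directly read off the stated contraction rate and inducing-variable threshold. First, I would identify the structural parameters from the SVD recalled just before the statement: the Radon transform is mildly ill-posed of order $p=1/4$, so $\kappa_j\asymp j^{-p}=j^{-1/4}$; the prior eigenvalues $\lambda_j\asymp j^{-1-2\beta}$ correspond to the choice $\alpha=\beta$ in the Corollary's notation; and the pointwise bound $U_{m(j)}(\cos\theta)\leq m(j)+1$ together with $m(j)\asymp\sqrt{j}$ yields $\|g_j\|_\infty\lesssim \sqrt{j}$, i.e.\ $\gamma=1/2$.

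Second, I would check the compatibility condition $(\alpha\wedge\beta)+p>3/2+2\gamma$ required by Corollary \ref{lemma: number of inducing variables}. With the parameters above this reduces to $\beta+1/4>3/2+1=5/2$, equivalent to $\beta>9/4$, which is precisely the hypothesis of the statement. Hence the assumptions of Corollary \ref{lemma: number of inducing variables} are met for the empirical spectral features method; the population spectral features variant is excluded because $\gamma\neq 0$, in line with the explicit restriction in that Corollary.

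Third, I would substitute the parameters into the rate and threshold formulas of Corollary \ref{lemma: number of inducing variables}. The contraction rate becomes
\[
\varepsilon_n^{\mathrm{inv}}=n^{-\frac{\alpha\wedge\beta}{1+2\alpha+2p}}=n^{-\frac{\beta}{3/2+2\beta}},
\]
and the sufficient number of inducing variables becomes $m_n\gtrsim n^{1/(1+2p+2\alpha)}=n^{1/(3/2+2\beta)}$, both matching the statement exactly. At this level the argument is a direct parameter-matching exercise with no additional analytical difficulty; the only subtle point is ensuring that the uniform eigenfunction bound is stated for the forward basis $g_j$ (which controls the $\gamma$-dependence in the assumptions of Theorem \ref{th: post rates}), and that the restriction $\beta>9/4$ is tight enough to absorb the $\gamma=1/2$ contribution. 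Any genuine analytical work is inherited from Theorem \ref{th: post rates} and Corollary \ref{lemma: number of inducing variables}, and does not need to be repeated here.
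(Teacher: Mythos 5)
Your proposal is correct and follows exactly the route the paper intends: the corollary is a direct instantiation of Corollary \ref{lemma: number of inducing variables} with $p=1/4$, $\alpha=\beta$, $\gamma=1/2$ (from $\|g_j\|_\infty\lesssim m(j)+1\asymp\sqrt{j}$), under which the compatibility condition $(\alpha\wedge\beta)+p>3/2+2\gamma$ reduces to $\beta>9/4$ and the rate and threshold formulas give $n^{-\beta/(3/2+2\beta)}$ and $m_n\gtrsim n^{1/(3/2+2\beta)}$. Your observation that the population spectral features variant is excluded because $\gamma\neq 0$ is also consistent with the paper's restriction to the empirical method in this case.
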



\section{Numerical analysis}\label{sec: Numerical analysis}

We demonstrate the approximation accuracy of the variational Bayes method on synthetic data. We consider here the recovery of the initial condition of the heat condition \ref{subsec: heat}, which is a severely ill-posed. In the supplement we provide additional simulation study for mildly ill-posed inverse problems as well.
We set the sample size $n=8000$, take uniformly distributed covariates on $[0,1)$, and let 
\[ f_0(t) =\sqrt{2}  \sum\nolimits_j c_j j^{-(1+\beta)}\sin(j \pi t),\quad c_j = \begin{cases} 1+0.4 \sin(\sqrt{5}\pi j), & j \text{ odd,}  \\ 
2.5+2 \sin(\sqrt{2}\pi j), & j \text{ even,} \end{cases}\]
for $\beta=1$. The independent- observations are generated as $Y_i\sim \mathcal{N}(\mathcal{A}f_0(x_i), 1)$, depending on the solution of the forward map $\mathcal{A}f_0$ after time $T=10^{-2}$.

We consider the prior with $\lambda_j = e^{-\xi j^2}$ for $\xi=10^{-1}$. In view of Corollary \ref{cor:heat} the optimal number of inducing variables is $m=\big(\xi + 2\pi^2 T\big)^{-1/2}\log^{1/2} n \approx 6$. We consider the population spectral feature method described in \eqref{eq: inducing var op} and plot the variational approximation of the posterior for $m=6$ and $m=3$ inducing variables in Figure \ref{fig: signal}. We represent the true posterior mean by solid red and the upper and lower pointwise $2.5\%$ quantiles by dashed red curves. The true function is given by blue and the mean and quantiles of the variational approximation by solid and dotted purple curves, respectively. 

Observe that with $m=6$, see left part of Figure \ref{fig: signal}, the variational approximation results in similar 95\% pointwise credible bands and posterior mean as the true posterior, providing an accurate approximation. Also note that both the true and the variational posterior contain $f_0$ at most of the points, indicating frequentist confidence validity of the set. At the same time, by taking a factor of two less inducing points, i.e. $m=3$, the credible sets will be overly large, resulting in good frequentist coverage, but suboptimally large posterior spread, see the second plot in Figure \ref{fig: signal}. 

The computations were carried out with a 2,6 GHz Quad-Core Intel Core i7 processor. The computation of the exact posterior mean and covariance kernel on a grid of $100$ points took over half an hour (in CPU time), while the variational approximation was substantially faster, taking only $50.5$ ms, resulting in a $3.68*10^{4}$ times speed.

\begin{figure}[h!]
\center
  \includegraphics[width=\textwidth]{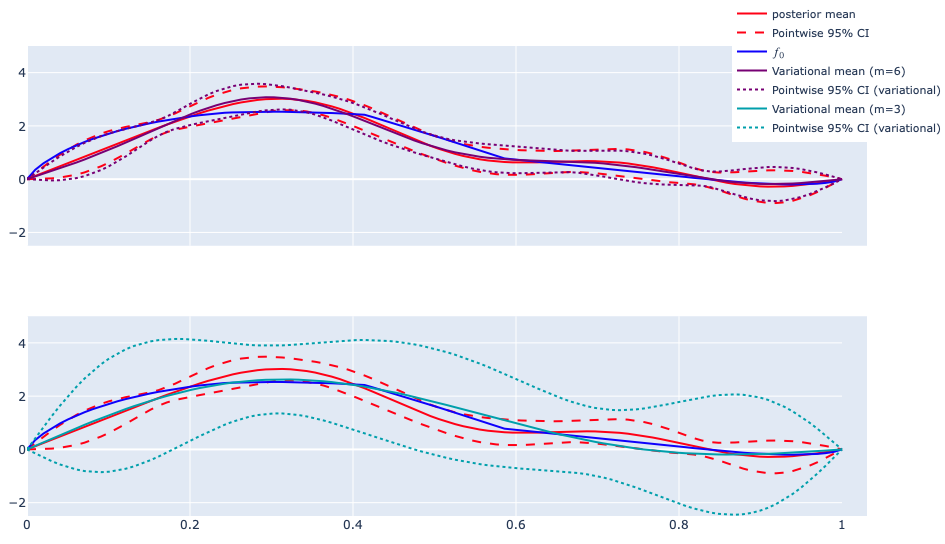}
  \caption{True and variational posterior means and credible regions for Gaussian series prior (sine basis) on the initial condition $\mu=f_0$ of the heat equation \eqref{eq: heat equation}, for $m = 6$ (left) or $m=3$ (right) inducing variables from method \eqref{eq: inducing var op}.}
  \label{fig: signal}
\end{figure}

A more extensive numerical analysis is available in the appendix, considering the application of our method to the settings of Sections \ref{subsec: volterra} and \ref{subsec: radon} as well. We conduct these experiments several times and compare the average Mean Integrated Squared Error (MISE), see appendix A, and compute time for different choices of $m$. We observe that in all our examples, while increasing $m$ results in longer computation, the MISE does not improve after a threshold close to the one presented in our results. Therefore, it is sufficient to include as many inducing variables as we considered in Corollary \ref{lemma: number of inducing variables} to obtain better performance. More than that would would only increase the computation complexity. In the Appendix, we also provide a literature review and some justifications of how relevant these problems are in practice

\section{Discussion}\label{sec:discussion}

We have extended the inducing variables variational Bayes method for linear inverse problems and derived asymptotic contraction rate guarantees for the corresponding variational posterior. Our theoretical results provide a guide for practitioners on how to tune the prior distribution and how many inducing variables to apply (in the spectral feature variational Bayes method) to obtain minimax rate optimal recovery of the true functional parameter of interest. We have demonstrated the practical relevance of this guideline numerically on synthetic data and have shown that using less variables results in highly suboptimal recovery. 

In our analysis we have considered priors built on the singular basis of the operator $\mathcal{A}$. In principle our results can be extended to other priors as well, until the eigenbasis of the covariance operator is not too different from the basis of the operator $\mathcal{A}$. This, however, would complicate the computation of the Kullback -Leibler divergence between the variational family and the posterior, resulting in extra technical challenges. In this setting the empirical spectral features method seems practically more feasible, especially, if the eigenbasis of the covariance kernel is not known explicitly. In the literature several different types of inducing variable methods were proposed, considering other, practically more relevant approaches of interest. Furthermore, extension to other type of inverse problems is also feasible. For instance in the deconvolution problem, when $f_0$ is convoluted with a rectangular kernel, the eigenvalues given by the SVD are the product of a polynomially decaying and oscillating part and the ``average degree'' of ill-posedness does not match the lower and upper bounds \cite{MR2102493}. Extension to non-linear inverse problem is highly relevant, as these problems tend to be computationally even more complex, but very challenging. One possible approach is to linearize the problem and take its variational approximation. Finally, it is of importance to derive frequentist coverage guarantees for VB credible sets. Our approach cannot directly be extended for this task. However, in the direct case, for some special choices of inducing variables, frequentist coverage guarantees were derived using kernel ridge regression techniques \cite{nieman2022uncertainty,vakili2022improved}. This result, although computationally somewhat cumbersome, in principle can be extended to the inverse setting as well. One last drawback of our results is that the priors we consider are non-adaptative in the mildly ill-posed case. Minimax contraction rates are attainable only if the covariance eigenvalues are properly tuned, given the smoothness $\beta$. While this is not an issue the severely ill-posed case in our results, we keep the study of adaptation for future works as it is a much more involved question.

\textit{Funding.} Co-funded by the European Union (ERC, BigBayesUQ, project number: 101041064). Views and opinions expressed are however those of the author(s) only and do not necessarily reflect those of the European Union or the European Research Council. Neither the European Union nor the granting authority can be held responsible for them.

\newpage

\appendix

\setcounter{page}{1}


\section{Additional experiments}

\subsection{Heat equation}

Pursuing the study of the recovery of the initial condition of the heat equation of Section \ref{sec: Numerical analysis}, we repeat the experience $50$ times with $n=4000$ observations, considering all other parameters identical to those used before. According to our theory, the number of inducing variables we should use is still equal to $m=\big(\xi + 2\pi^2 T\big)^{-1/2}\log^{1/2} n \approx 6$.  As before, we consider the population spectral feature method described in \eqref{eq: inducing var op}.

The results from one experiment are presented in Figure \ref{fig: Heat 4000}. We plot the resulting variational approximation of the posterior for $m=6$ and $m=3$ inducing variables and represent the true posterior mean by solid red and the upper and lower pointwise $2.5\%$ quantiles by dashed red curves. The true function is given by blue and the mean and quantiles of the variational approximation by solid and dotted purple/cyan curves, respectively. 

The conclusions we draw from this experiment are the same as those in Section \ref{sec: Numerical analysis}. With the optimal choice $m=6$ following from our theoretical results(on the left of Figure \ref{fig: Heat 4000}), the posterior and variational means are almost indistinguishable and the 95\% pointwise credible bands are identical and contains $f_0$ almost everywhere. However, reducing the number of inducing points to $m=3$, the variational credible sets become much larger, providing less information about $f_0$, and the variational posterior mean is smoother, providing a worse fit to $f_0$.

\begin{figure}[h!]
\center
  \includegraphics[width=\textwidth]{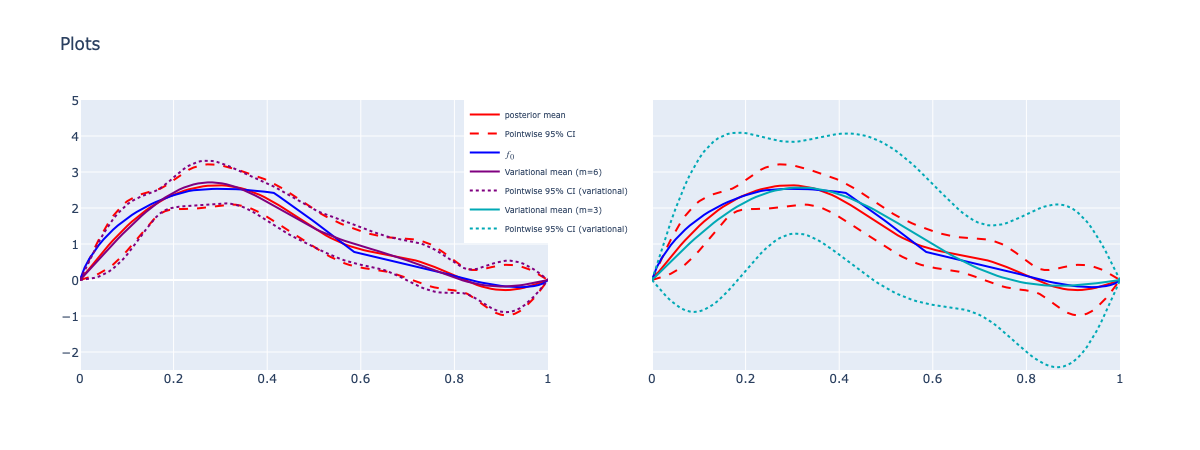}
  \caption{True and variational posterior means and credible regions for Gaussian series prior (sine basis) on the initial condition $\mu=f_0$ of the heat equation \eqref{eq: heat equation}, with $m = 6$ (left) or $m=3$ (right) inducing variables from method \eqref{eq: inducing var op}, computed from $n=4000$ observations.}
  \label{fig: Heat 4000}
\end{figure}

In Figure \ref{fig: box heat}, we summarize the results from the $50$ experiments we ran, assessing the quality of the different posterior distributions we consider via the mean integrated squared error (MISE)
\begin{equation}\label{eq: MISE} \int \norm{f-f_0}^2_{L_2(\mathcal{T};\mu)} d\Pi[f| X,Y],\end{equation}
which can be computed explicitly as the posterior is Gaussian. We compare the true posterior and the variational posteriors obtained with the optimal choice of $m=6$ inducing variables and twice more/less variables with $m=12$ and $m=3$, respectively. On the right-hand side of Figure \ref{fig: box heat}, we see that $m=3$ is a suboptimal choice as it results in a much higher MISE than the other approaches. On the left-hand side of Figure \ref{fig: box heat}, we also report the computation times of the methods, and we highlight that the true posterior takes much longer than any of the variational approximations. On Figure \ref{fig: box heat zoom}, we further see that increasing the number of inducing variables results in more computation time, as expected. At the same time, increasing the number of inducing variables beyond the optimal threshold ($m=6$) does not increase the accuracy considerably.

\begin{figure}[h!]
\center
  \includegraphics[width=\textwidth]{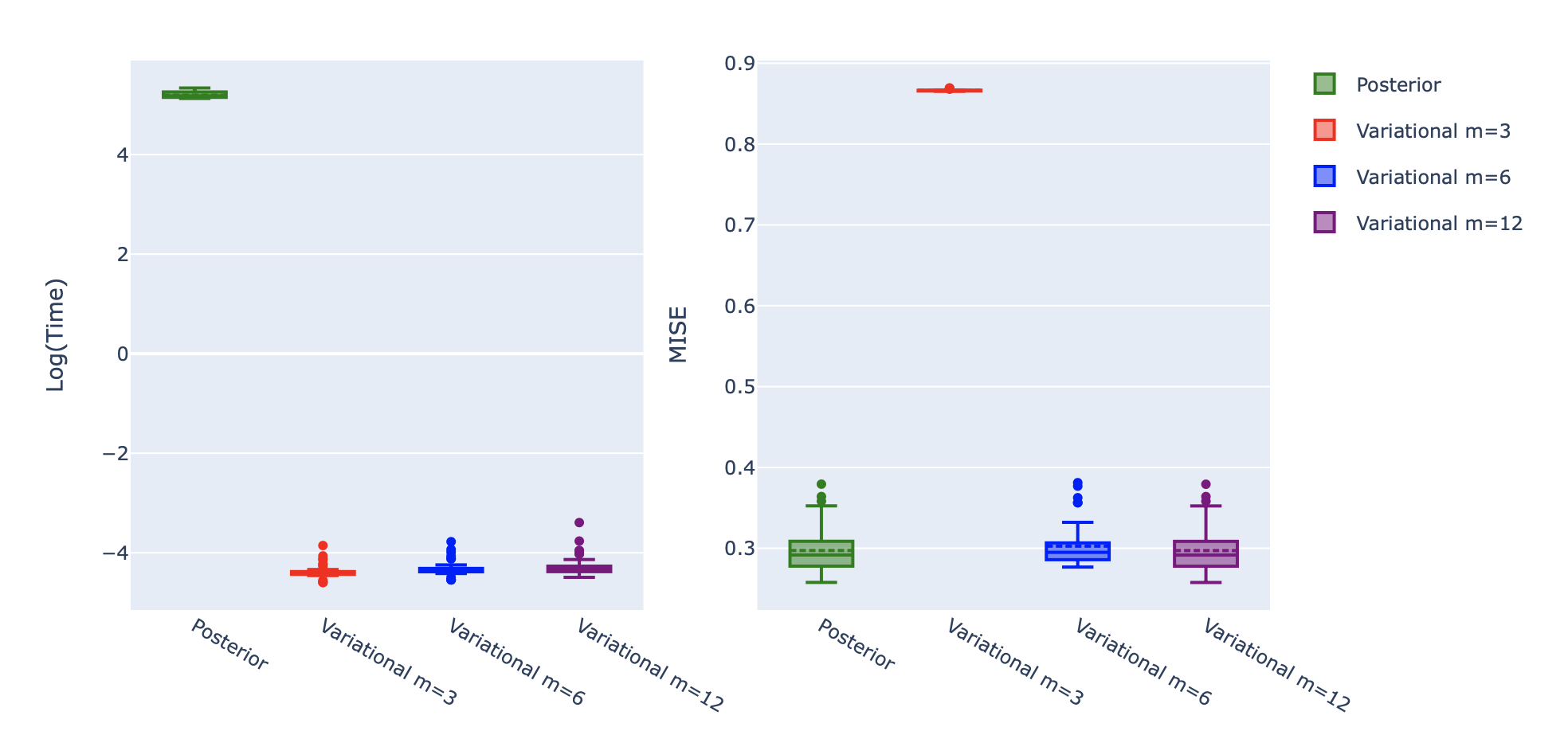}
  \caption{Boxplots of the (logarithm of) computation time (in seconds, on the left) and the MISE (on the right) of the true and variational posteriors for Gaussian series prior (sine basis) on the initial condition $\mu=f_0$ of the heat equation \eqref{eq: heat equation}, with $m = 3,6,12$ inducing variables from method \eqref{eq: inducing var op},  obtained from $50$ experiments with $n=4000$ samples.}
  \label{fig: box heat}
\end{figure}

\begin{figure}[h!]
\center
  \includegraphics[width=\textwidth]{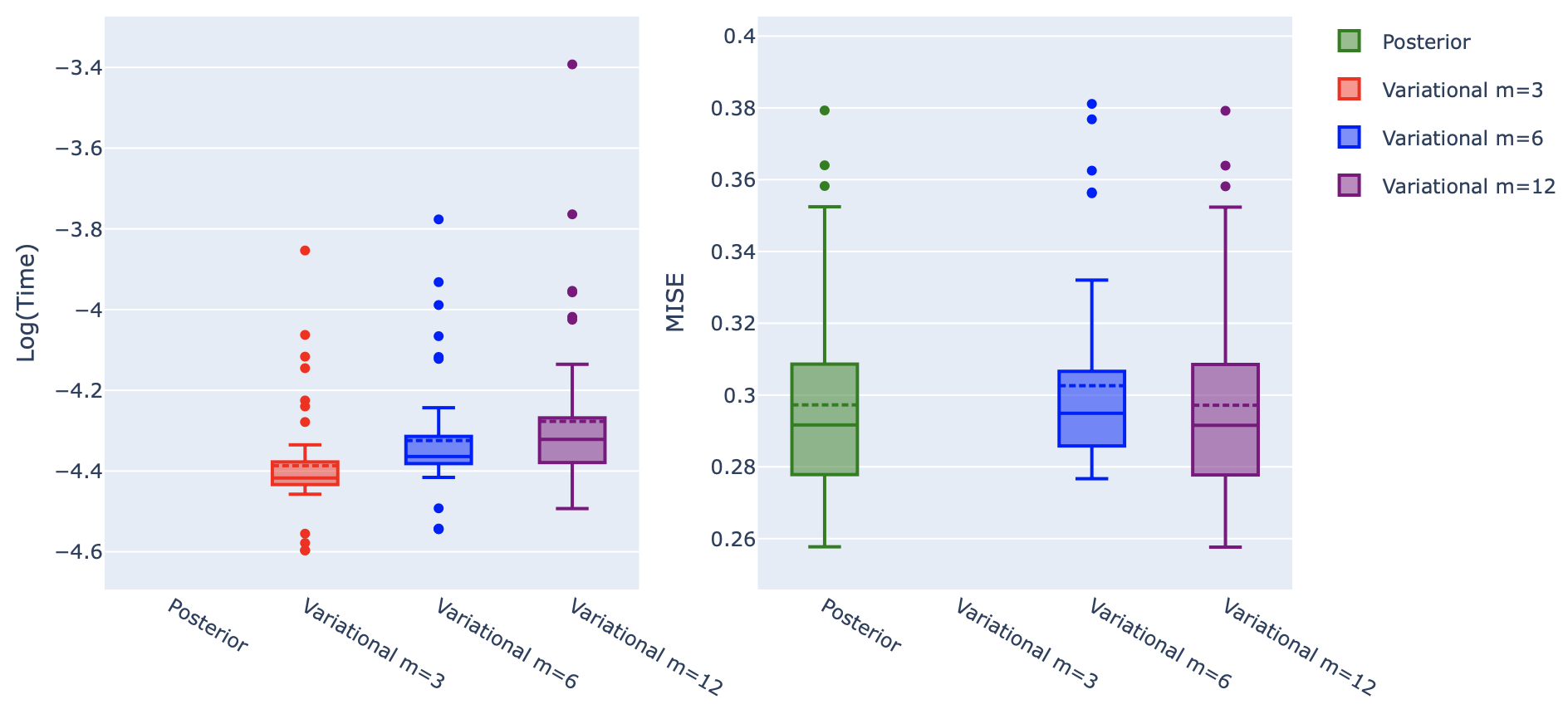}
  \caption{Zoom of Figure \ref{fig: box heat}.}
  \label{fig: box heat zoom}
\end{figure}

\subsection{Volterra operator}

Next we consider the Volterra operator \eqref{eq: volterra operator}. This is a mildly ill-posed problem of degree $p=1$. We set the sample size $n=15000$, take uniformly distributed covariates on $[0,1]$, and let 
\[ f_0(t) =\sqrt{2}  \sum\nolimits_j c_j j^{-(1+\beta)}\cos((j-1/2) \pi t),\quad c_j = \begin{cases} 1+0.9 \sin(\sqrt{3}\pi j), & j \text{ odd,}  \\ 
1+0.8 \sin(\sqrt{7}\pi j), & j \text{ even,} \end{cases}\]
for $\beta=0.6$, so that $f_0\in \bar{H}^\beta$. The independent observations are then generated as $Y_i\sim \mathcal{N}(\mathcal{A}f_0(x_i), 1)$, depending on the primitive of $f_0$.

We consider the prior with $\lambda_j = j^{-1-2\beta}$. In view of Corollary \ref{cor:heat} the optimal number of inducing variables is $m=n^{\frac{1}{3+2\beta}} \approx 10$. We consider the population spectral features method described in \eqref{eq: inducing var op} and plot the variational approximation of the posterior for $m=10$ and $m=5$ inducing variables in Figure \ref{fig: signal volterra}, using the same colorcode as in the previous section.

With $m=10$ on the top of Figure \ref{fig: signal volterra}, the variational approximation results in similar 95\% pointwise credible bands and posterior mean as the true posterior, providing an accurate approximation. On the bottom of this figure, we observe that the mean and pointwise credible bands with $m=5$ inducing variables are considerably different, though in both cases, the credible bands contain $f_0$.

\begin{figure}[h!]
\center
  \includegraphics[width=\textwidth]{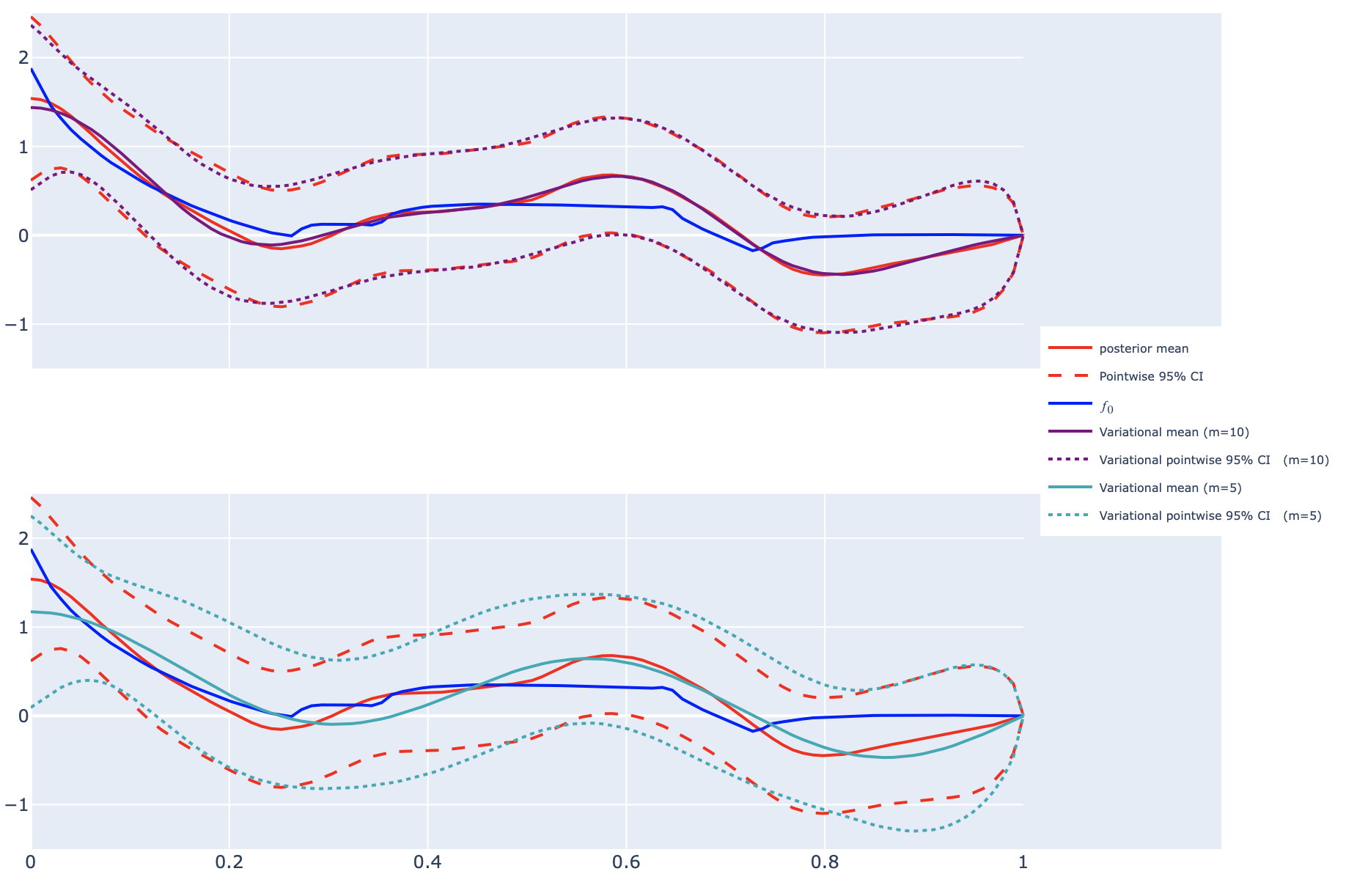}
  \caption{True and variational posterior means and credible regions for Gaussian series prior (cosine basis) and $m = 10$ (top) or $m=5$ (bottom) inducing variables from method \eqref{eq: inducing var op} for the Volterra operator in Section \ref{subsec: volterra}.}
  \label{fig: signal volterra}
\end{figure}

We repeat the above experiment $30$ times with $n=4000$ and compare the computation times and MISE \eqref{eq: MISE} of the true posterior and the variational posteriors obtained with the optimal choice of $m=8$ inducing variables and twice more/less variables $m=16$/$m=4$. Looking at Figures \ref{fig: box volterra} and \ref{fig: box volterra zoom}, the same message holds as before, in case of the heat equation.

\begin{figure}[h!]
\center
  \includegraphics[width=\textwidth]{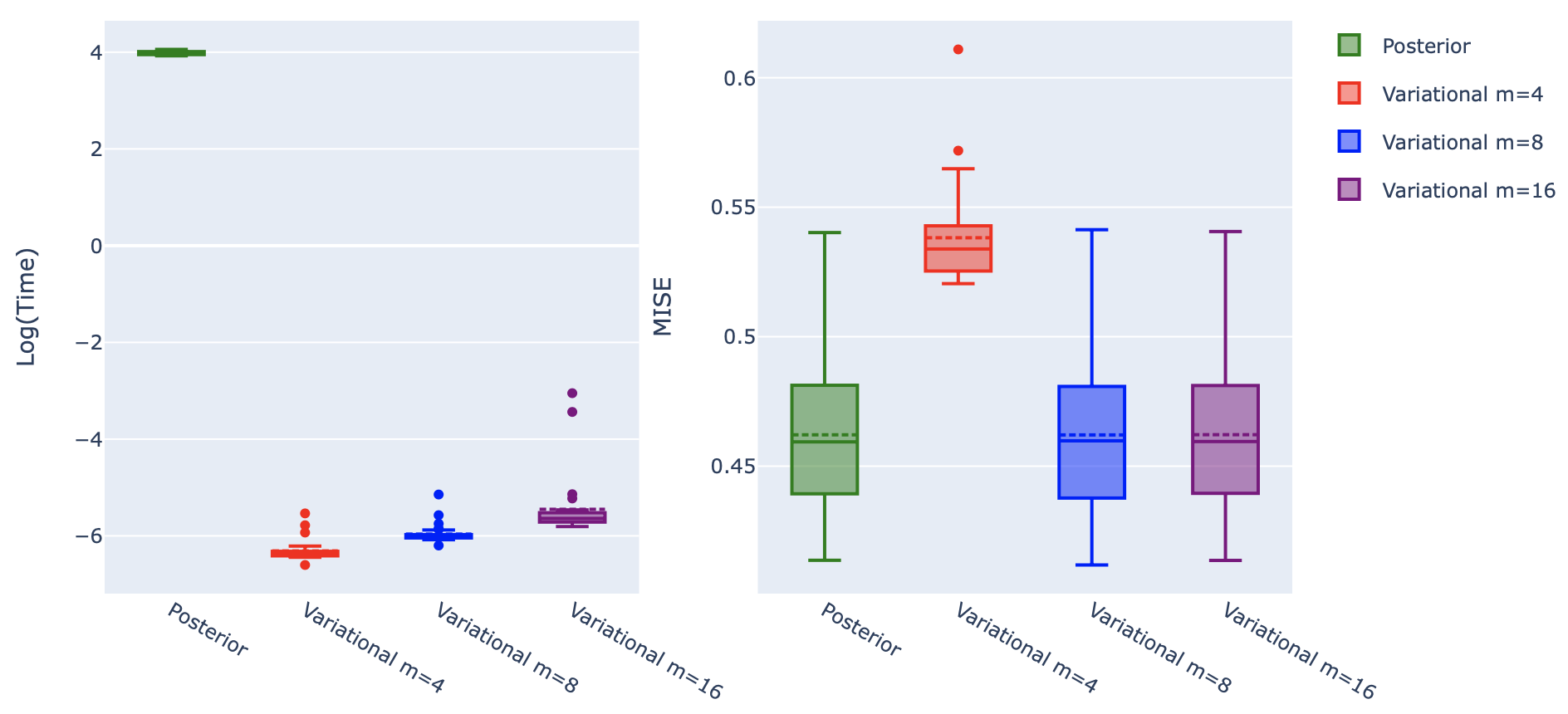}
  \caption{Boxplots of the (logarithm of) computation time (in seconds, on the left) and the MISE (on the right) of the true and variational posteriors for Gaussian series prior (cosine basis) with $m = 4,8,16$ inducing variables from method \eqref{eq: inducing var op},  obtained from $50$ experiments with $n=4000$ samples, for the Volterra operator in Section \ref{subsec: volterra}.}
  \label{fig: box volterra}
\end{figure}

\begin{figure}[h!]
\center
  \includegraphics[width=\textwidth]{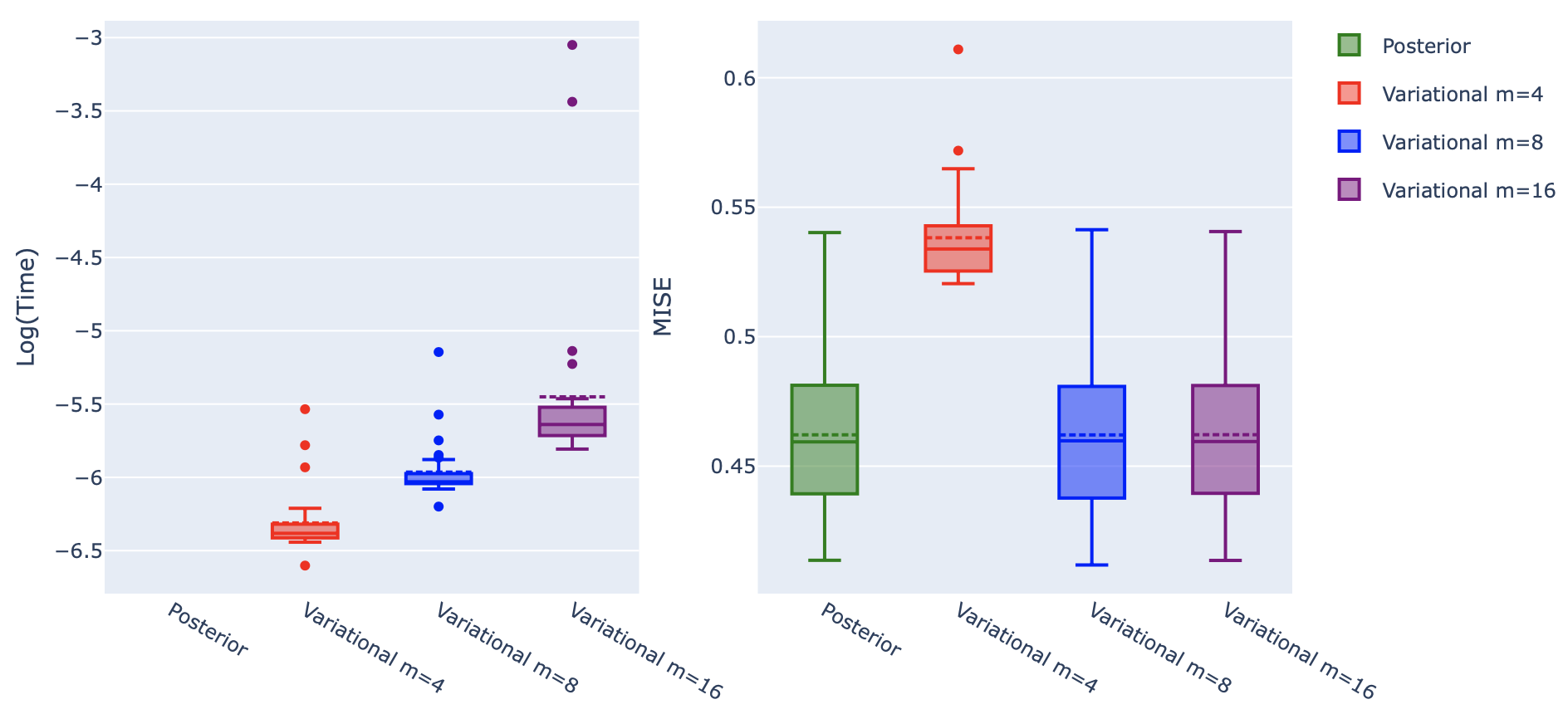}
  \caption{Zoom of Figure \ref{fig: box volterra}.}
  \label{fig: box volterra zoom}
\end{figure}

We also illustrate and compare theoretical and empirical phase-transition curves on synthetic data coming from the Volterra operator \eqref{eq: volterra operator}. We computed the (logarithm of the) ratio of the mean integrated squared error (MISE) corresponding to the true and variational posteriors (we simulate $20$ experiments each time to empirically approximate these quantities). We have considered $n$ ranging from $100$ to $10000$ and $m$ from $1$ to $17$, under the same setting as above. We have also plotted the phase transition curve (white line) coming from our theoretical analysis on Figure \ref{fig: phase transition}. One can note that the theoretical curve closely resembles the curve where the phase transition occurs in the empirical study. Indeed, there is not much empirical improvement of the MISE after the threshold given by Corollary \ref{cor: volterra}.

\begin{figure}[h!]
\center
  \includegraphics[scale=0.3]{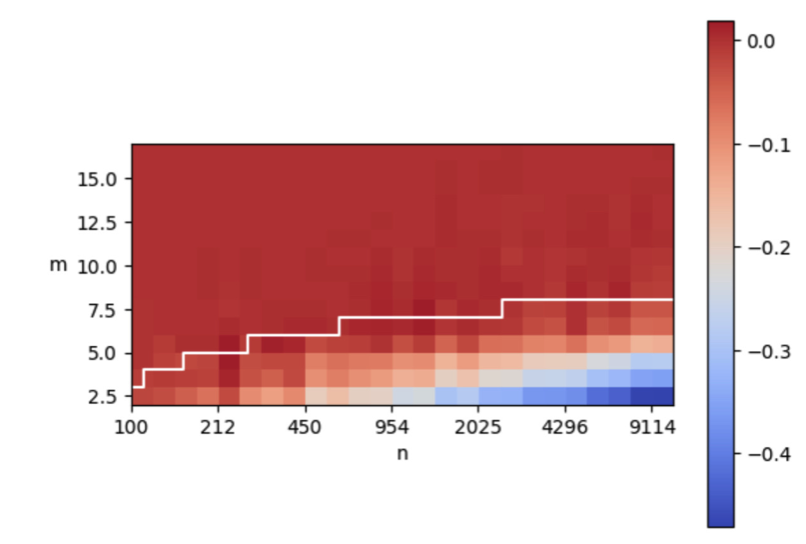}
  \caption{Log ratio $\log \frac{E_{f_0} \text{MISE}(\Pi[\cdot|X,Y])}{E_{f_0} \text{MISE}(\Psi^*)}$ of MISE between the true and variational posteriors recovering $f_0$ from its image by the Volterra operator in Section \ref{subsec: volterra}. In white is represented the function $n \to \lceil n^{\frac{1}{3+2\beta}} \rceil$ given by Corollary \ref{cor: volterra}.}
  \label{fig: phase transition}
\end{figure}

\subsection{Radon transform}

\subsubsection{Experiments}

We now turn to a simulation study of the Radon transform \eqref{eq: Radon transform}, which represents a mildly ill-posed problem of degree $p=1/4$. We observe the performance of the true posterior and variational approximations for different sample sizes,  $n=500$ and $n=5000$, and take independent covariates drawn from the distribution $dG(s,\phi)=2\pi^{-1}\sqrt{1-s^2}dsd\phi$ on $S=[0,1]\times [0,2\pi)$. We set the polar coordinates of the functional parameter $f_0$ on the unit disc $D=\left\{x\in \mathbb{R}^2:\ \norm{x}_2\leq 1\right\}$ as
\[ f_0(r,\theta) = \sum\nolimits_j c_j j^{-(1+\beta)}e_j(r,\theta),\quad c_j = \begin{cases} 1+0.5 \sin(\sqrt{3}\pi j), & j \text{ odd,}  \\ 
2+0.8 \sin(\sqrt{7}\pi j), & j \text{ even,} \end{cases}\]
for $\beta=0.6$ and 
\[ e_j(r,\theta) = 
\begin{cases} 
Z_{m_j}^{|l_j|}(r) \cos(l_j\theta), & l_j >0, \\ 
Z_{m_j}^{|l_j|}(r), & l_j=0,\\
Z_{m_j}^{|l_j|}(r) \sin(l_j\theta), & l_j<0,
 \end{cases}
\]
$m_j = \lceil\frac{\sqrt{1+8j}-1}{2}\rceil-1$ and $l_j = 2(j-1)-m_j(m_j+2)$. Note that $f_0\in \bar{H}^\beta$.  The independent observations are again generated as $Y_i\sim \mathcal{N}(\mathcal{A}f_0(x_i), 1)$.
 
We again consider the prior eigenvalues $\lambda_j = j^{-1-2\beta}$. The optimal number of inducing variables is $m=n^{\frac{2}{3+4\beta}}$, which for the different sample sizes $n=500$ and $n=5000$ we consider is equal to $10$ and $24$, respectively. We consider the population spectral feature method described in \eqref{eq: inducing var op} and plot the variational approximation of the posterior for $m$ and $\lceil m/4\rceil$ inducing variables in Figure \ref{fig: Radon_500} ($n=500$) and Figure \ref{fig: Radon_5000} ($n=5000$). For each setting, we represent the true posterior mean, the variational means, the upper and lower pointwise $2.5\%$ quantiles as well as the absolute pointwise difference between $f_0$ and the posterior/variational means $\hat{f}_n$. Negative values are represented in blue, positive ones in red and points corresponding to small absolute values are in white.

Again, similar conclusions can be drawn as in the previous sections. Observe that the true posterior and variational means are similar for the optimal choice of $m$, while choosing four times less inducing variables results in a posterior mean that is much smoother. On the other hand, the credible bands with the suboptimal choice of $m$ are overly large compared to the true posteriors.

\begin{figure}[h!]
\center
  \includegraphics[width=\textwidth]{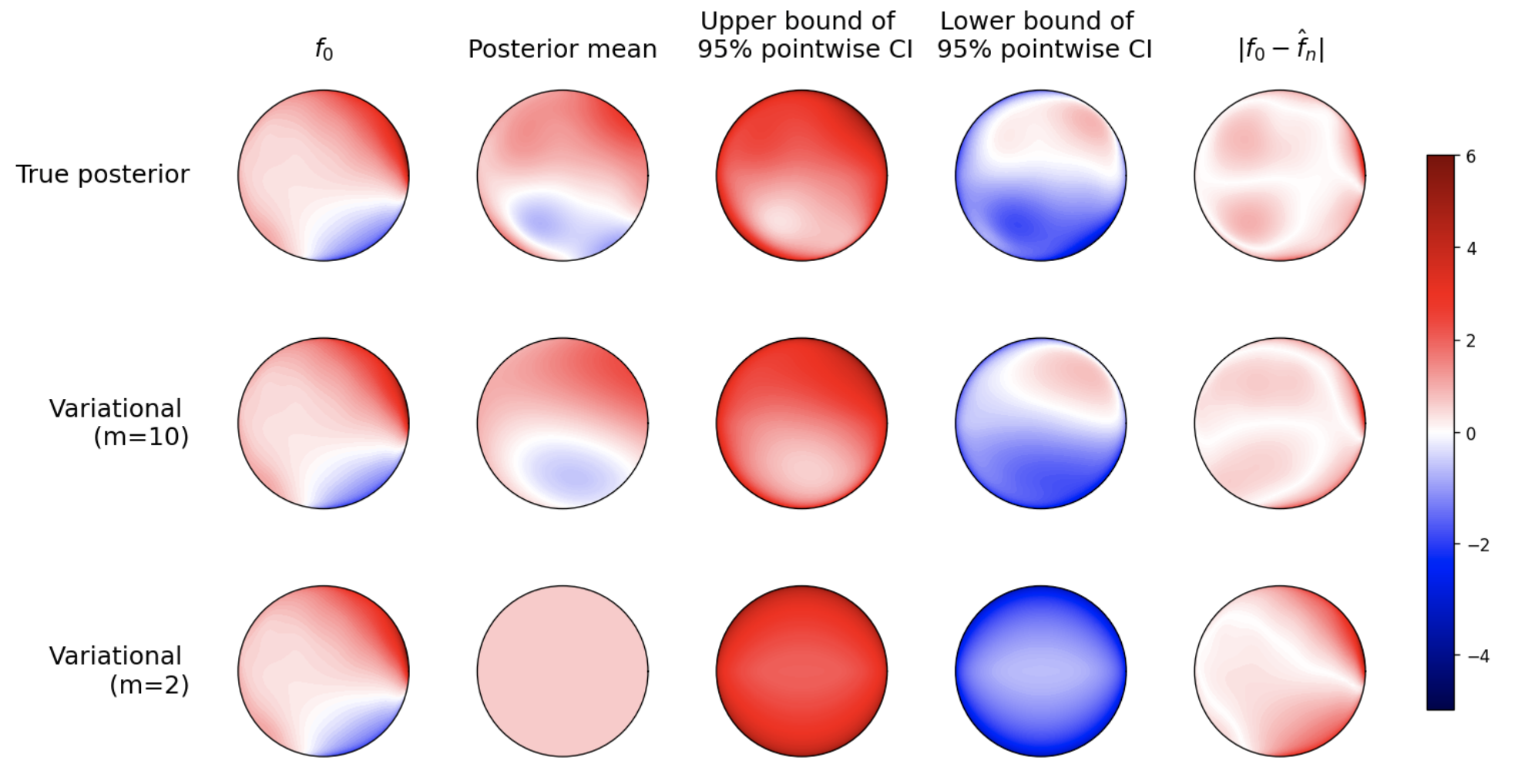}
  \caption{True and variational posterior means and credible regions for Gaussian series prior (Zernike polynomial basis) and $m = 17$ (middle) or $m=8$ (bottom) inducing variables from method \eqref{eq: inducing var op}, with $n=5000$, recovering the parameter $f_0$ from its Radon transforn (see Section \ref{subsec: radon}).}
  \label{fig: Radon_500}
\end{figure}

\begin{figure}[h!]
\center
  \includegraphics[width=\textwidth]{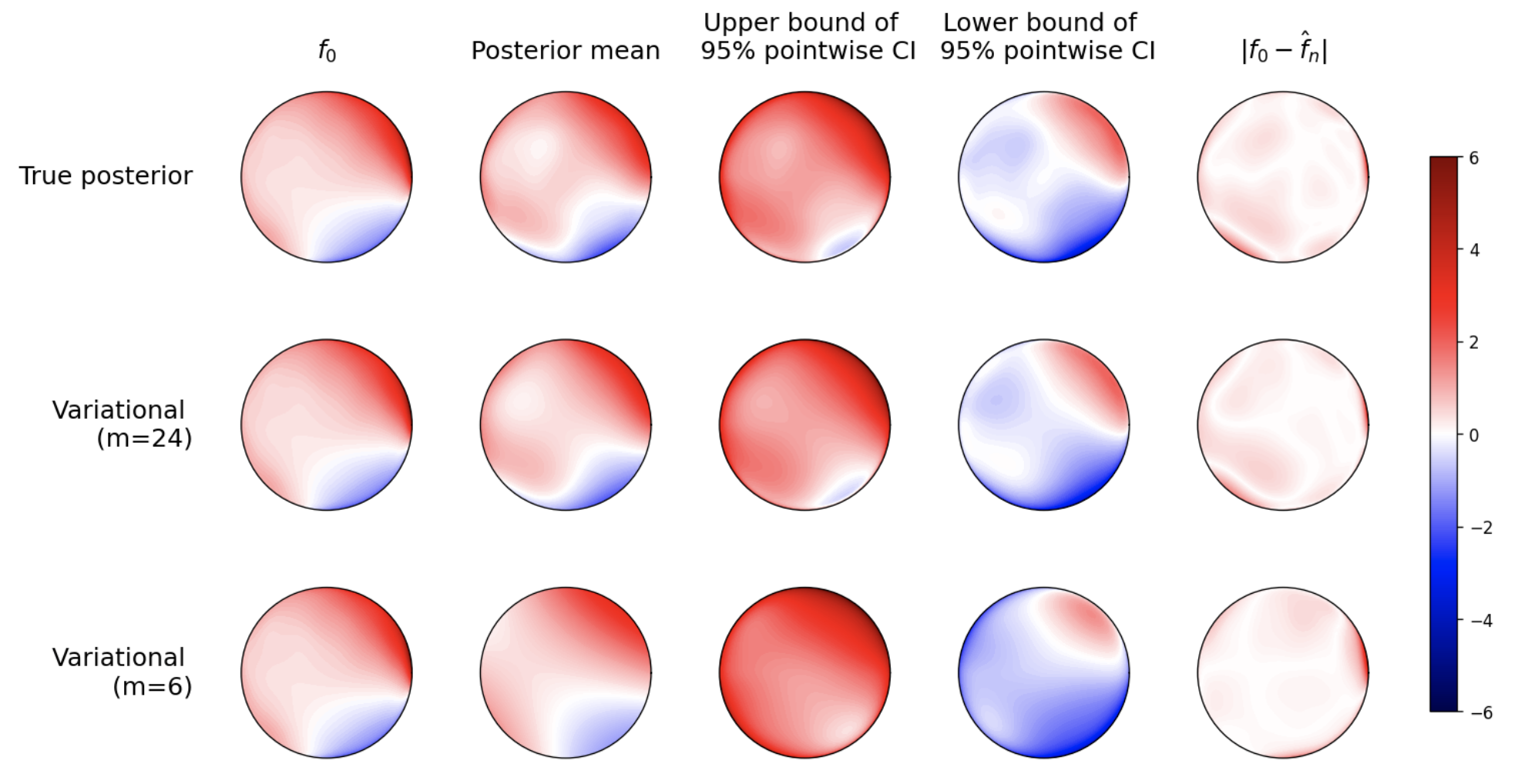}
  \caption{True and variational posterior means and credible regions for Gaussian series prior (Zernike polynomial basis) and $m = 24$ (middle) or $m=12$ (bottom) inducing variables from method \eqref{eq: inducing var op}, with $n=5000$, recovering the parameter $f_0$ from its Radon transforn (see Section \ref{subsec: radon}).}
  \label{fig: Radon_5000}
\end{figure}


\subsubsection{Applications}

The Radon transform is a mathematical technique with various applications, particularly in the field of medical imaging and image analysis. It is used to analyze and transform data from the spatial domain to the Radon domain, providing a different perspective on the data that can be useful for specific tasks. Inverting the Radon transform has found a lot of applications where lower-dimensional integrals of the inside of an object are more readily available than the object of interest itself.  We provide below a non-exhaustive list of possible applications:

\begin{itemize}
\item \textit{Computed Tomography (CT) Imaging and Medical Single Photon Emission Computed Tomography (SPECT)}: In CT scans, X-ray measurements are taken from different angles around a patient, and the Radon transform is used to reconstruct a cross-sectional image (slice) of the patient's body. This helps doctors visualize internal structures and diagnose various medical conditions. SPECT is a nuclear medicine imaging technique that uses gamma-ray detectors to generate 3D images of the distribution of radioactive tracers within a patient's body. The Radon transform is used in the image reconstruction process for SPECT. \cite{Ambartsoumian_2020, BARRETT1984, Natterer2001, ramlau2019}

\item  \textit{Seismic Imaging}: In seismology, the Radon transform is employed to process seismic data collected from earthquakes or controlled explosions. It helps create images of the subsurface structure of the Earth, aiding in oil and gas exploration and understanding geological formations. \cite{deHoop_2009, Raluca2016}

\item  \textit{Geophysical Imaging}: The Radon transform has applications in geophysical imaging techniques such as ground-penetrating radar (GPR), where it helps in image reconstruction to understand subsurface properties. \cite{2006Moysey}    

\item  \textit{Radar and Sonar Imaging}: The Radon transform is used in underwater sonar imaging to reconstruct images of underwater objects or terrains, or radar imaging to create imaging of landscapes. This has applications in marine biology, naval operations, and underwater exploration.\cite{Quinto_2011, Redding2004}

\item  \textit{Particle Tracking}: In high-energy physics and particle physics experiments, the Radon transform is used to analyze data from particle detectors to track the paths of particles, determining their trajectories and energies.\cite{RMankel_2004}

\item  \textit{Material Science and Crystallography}: The Radon transform can be applied to analyze diffraction patterns in crystallography and material science, helping to understand the structure of materials at the atomic level.\cite{Bernstein2013}
\end{itemize}

In Mathematics, the Radon transform has also been used to solve hyperbolic partial differential equations via the method of plane waves, which reduces the problem to the resolution of ordinary differential equations \cite{Helgason1999, Rim2017}.

\newpage
\section{Proof of Theorem 1}

We start by introducing some notation and background information used throughout the proof. First note, that since the eigenfunctions of the covariance kernel $k$ were chosen to be the eigenfunctions of $\mathcal{A}^*\mathcal{A}$, the prior $\Pi_\mathcal{A}$ on $\mathcal{A}f$, induced by the GP prior $\Pi$ on $f$, is also a centered Gaussian process with covariance kernel
\begin{align}
(x,y)\mapsto \sum_{j=1}^{\infty}\lambda_j\kappa_j^2 g_j(x)g_j(y),\label{def:prior:inv}
\end{align}
i.e. the eigenvalues and eigenfunctions of the kernel are $(\lambda_j\kappa_j^2)_{j\in\mathbb{N}}$ and $(g_j)_{j\in\mathbb{N}}$, respectively. Let us denote by  $\mathbb{H}_{\mathcal{A}}$ the corresponding  Reproducing Kernel Hilbert Space (RKHS) and by $\mathbb{H}$ the RKHS corresponding to the prior $\Pi$ on $f$. In view of Theorem I.18 of \cite{fundamentalsBNP}, the above RKHS takes the form
\begin{align}\label{def:rkhs:inverse}
 \mathbb{H}_{\mathcal{A}}=\left\{h(x)=\sum_{j=1}^\infty h_j g_j(x)\colon\ \norm{h}^2_{\mathbb{H}_{\mathcal{A}}}\coloneqq \sum_{j=1}^\infty h_j^2 \lambda_j^{-1}\kappa_{j}^{-2}<\infty \right\},
\end{align}
where $h_j=\langle h, g_j\rangle_{L_2(\mathcal{X};G)}$. Furthermore, note that for all measurable set $S\subset L_2(\mathcal{X};G)$ we have for $Z_j\sim^{iid} N(0,1)$ random variables, that
\begin{align}
\Pi(f:\, \mathcal{A}f\in S)=P(\sum_{j=1}^\infty \lambda_j^{1/2}\kappa_j Z_j g_j\in S)=\Pi_{\mathcal{A}}(w:\, w\in S).\label{eq:help:prob:Gaussians}
\end{align}
In the next sections, we denote the rates for the direct problems by
\begin{equation}\label{eq:def:eps_n}
\varepsilon_n=M
\begin{cases}
n^{-\frac{\alpha\wedge\beta+p}{1+2\alpha+2p}}& \text{ in the mildly ill-posed case,}\\
n^{-c/(\xi+2c)} \log^{-\beta/p+c\alpha/(\xi+2c)} (n)& \text{ in the severely ill-posed case,}
\end{cases}
\end{equation}
 for some $M>0$ large enough.

Finally, we provide an explicit formula of the KL divergence between the posterior distribution $\Pi[\cdot\ |X,Y]$ and the variational approximation $\Psi^*$. It can be expressed with the evidence lower bound $\mathcal{L}$ as
\[\text{KL}\left(\Psi^*|\! |\Pi[\cdot | X,Y]\right) = \log p(X,Y) - \mathcal{L},\]
where computations from \cite{titsias2009variational} give
\begin{align*}
\mathcal{L} &\coloneqq \log \int \exp\Big(\int \log p_f(X,Y) d\Pi(f|\mathbf{u})\Big) d\Pi_u(\mathbf{u})\\
&= -\big|2\pi\big(\sigma^2I_n+K_{\boldsymbol{\mathcal{A}f}\boldsymbol{\mathcal{A}f}}\big)\big|-\frac{1}{2\sigma^2}\mathbf{y}\left(\sigma^2I_n+K_{\boldsymbol{\mathcal{A}f}\boldsymbol{\mathcal{A}f}}\right)^{-1}\mathbf{y}-Tr\left(K_{\boldsymbol{\mathcal{A}f}\boldsymbol{\mathcal{A}f}}-Q_{\boldsymbol{\mathcal{A}f}\boldsymbol{\mathcal{A}f}}\right),
\end{align*}
for  $Q_{\boldsymbol{\mathcal{A}f}\boldsymbol{\mathcal{A}f}}=K_{\boldsymbol{\mathcal{A}f}\boldsymbol{u}}K_{\boldsymbol{u}\boldsymbol{u}}^{-1}K_{\boldsymbol{u}\boldsymbol{\mathcal{A}f}}$. Then the KL divergence takes the form
\begin{equation}\label{eq: KL post var}
\begin{split}
 \text{KL}\left(\Psi^*|\! |\Pi[\cdot | X,Y]\right) &= \frac{1}{2}\Big(\mathbf{y}\big[\big(\sigma^2I_n+Q_{\boldsymbol{\mathcal{A}f}\boldsymbol{\mathcal{A}f}} \big)^{-1}-\big(\sigma^2I_n+K_{\boldsymbol{\mathcal{A}f}\boldsymbol{\mathcal{A}f}} \big)^{-1}\big]\mathbf{y}^T\\
&\qquad +\log\frac{|\sigma^2I_n+Q_{\boldsymbol{\mathcal{A}f}\boldsymbol{\mathcal{A}f}}|}{|\sigma^2I_n+K_{\boldsymbol{\mathcal{A}f}\boldsymbol{\mathcal{A}f}}|}+\frac{1}{\sigma^2}Tr(K_{\boldsymbol{\mathcal{A}f}\boldsymbol{\mathcal{A}f}}-Q_{\boldsymbol{\mathcal{A}f}\boldsymbol{\mathcal{A}f}})\Big).
\end{split}
\end{equation}

\subsection{Step 1: Empirical $L_2$ contraction in the direct problem}
As a first step we fix the design points and derive posterior contraction rate around $\mathcal{A}f_0$ with respect to the empirical $L_2(\mathcal{X};P_n)$-norm, i.e. $\|w\|_{L_2\left(\mathcal{X}; P_n\right)}^2=n^{-1}\sum_{i=1}^n w(x_i)^2$. More precisely, we show that there exists an event $A_n$ with $P_X(A_n)\to 1$ and events $B_{n,|X}$ conditional on the design $X$ with  $\inf_{X\in A_n}P_{Y\given X}(B_{n,|X})\to 1$, such that for any sequence $M_n\rightarrow \infty$ and $X\in A_n$
 \begin{equation}\label{eq: post rate empirical l2 norm} 
 E_{Y\given X}\Pi\left[f\colon\|\mathcal{A}f-\mathcal{A}f_0\|_{L_2\left(\mathcal{X}; P_n\right)} \geq M_n\varepsilon_n\given\ X, Y\right]\mathds{1}_{B_{n,|X}}\leq Ce^{-cM_n^2n\varepsilon_n^2}
 \end{equation}
holds for $\varepsilon_n$ given in \eqref{eq:def:eps_n}.

Let us recall the definition of the concentration function (in case of the direct problem)
 \[ \phi_{\mathcal{A}f_0}(\varepsilon) \coloneqq \underset{h\in \mathbb{H}_{\mathcal{A}}: \|\mathcal{A}f_0-h\|_{L_2\left(\mathcal{X}; P_n\right)}< \varepsilon}{\inf}\ \norm{h}^2_{\mathbb{H}_{\mathcal{A}}}-\log \Pi_{\mathcal{A}}\left(w:\,\|w\|_{L_2\left(\mathcal{X};P_n\right)}<\varepsilon\right).\]
Then in view of  Theorem 3.3 of \cite{MR2418663}, to prove \eqref{eq: post rate empirical l2 norm} it is sufficient to verify the concentration inequality
\begin{equation}\label{eq: concentration inequality} 
 \phi_{\mathcal{A}f_0}(\varepsilon_n)\leq n\varepsilon_n^2.
 \end{equation}
This result is based on \cite{MR2332274} where in the proof it is shown that there exists a sequence of events $B_{n,|X}$ such that $\sup_{X}P_{Y\given X}\left(B_{n,|X}^c\right)$ vanishes and \eqref{eq: post rate empirical l2 norm} holds $P_X$-almost surely.

We prove \eqref{eq: concentration inequality} in two steps. First we verify it for the $L_2(\mathcal{X};G)$-norm, i.e. we show that for $M$ large enough in \eqref{eq:def:eps_n},
\begin{align}
\underset{h\in\mathbb{H}_{\mathcal{A}}: \norm{h-\mathcal{A}f_0}_{L_2(\mathcal{X};G)}\leq \varepsilon_n}{\inf} \norm{h}^2_{\mathbb{H}_{\mathcal{A}}} & \leq n\varepsilon_n^2,\label{eq: approx rkhs eq}\\
-\log \Pi_\mathcal{A}\left(w:\, \|w\|_{L_2(\mathcal{X};G)}<\varepsilon_n\right) &\leq n\varepsilon_n^2.\label{eq: small ball 0}
\end{align}
Then we relate the population $L_2(\mathcal{X};G)$-norm to the empirical $L_2\left(\mathcal{X}; P_n\right)$-norm on a large enough event $A_n$, finishing up the argument. We note that one can not apply this result to the $L_2(\mathcal{X};G)$-norm as the testing metric (Hellinger) and the $L_2$-norm do not satisfy the required connection.

In the mildly ill-posed case the above inequalities directly follow from  Lemma \ref{lemma: RKSH approx} and \ref{lemma: small ball prob 0}, respectively. In the severely ill-posed case for \eqref{eq: approx rkhs eq} in view of Lemma \ref{lemma: RKSH approx} it is sufficient to verify that  $J_{\varepsilon_n}^{\alpha-2\beta}e^{\xi J_{\varepsilon_n}^p}\leq n\varepsilon_n^2$ for $J_{\varepsilon_n}^{\beta} e^{c J^p_{\varepsilon_n} }\asymp \varepsilon_n^{-1}$. Note that by substituting $\varepsilon_n$ in the previous inequality, we equivalently get $J_{\varepsilon_n}^{\alpha}e^{(\xi+2c) J_{\varepsilon_n}^p} \lesssim n$.
Then, in view of Section 3.3 of \cite{MR3757524} (using the Lambert function) this holds for some $J_{\varepsilon_n}= O(\log^{1/p} n)$. Furthermore, following from $e^{J_{\varepsilon_n}^p} \lesssim \left(nJ_{\varepsilon_n}^{-\alpha}\right)^{1/(\xi+2c)}$, we arrive at
\begin{align*}
\varepsilon_n\asymp J_{\varepsilon_n}^{-\beta} e^{-c J^p_{\varepsilon_n} } \gtrsim n^{-c/(\xi+2c)} \log^{-\beta/p+c\alpha/(\xi+2c)} (n),
\end{align*}
finishing the proof of \eqref{eq: approx rkhs eq}. For \eqref{eq: small ball 0}, in view of Lemma \ref{lemma: small ball prob 0}, we need  $\varepsilon_n\gtrsim n^{-1/2} \log^{(p+1)/2p} n$, which holds for $\varepsilon_n$.

It remained to replace in \eqref{eq: approx rkhs eq} and \eqref{eq: small ball 0} the  $L_2\left(\mathcal{X}; G\right)$-norm with the $L_2\left(\mathcal{X}; P_n\right)$-norm. First note that in view of Lemma \ref{lem:norms}, there exists an event $A_{n,1}$ with $P_X(A_{n,1}^c)=o(1)$ such that for $X\in A_{n,1}$
\[
 \Pi_\mathcal{A}\big(\norm{w}_{L_2(\mathcal{X};P_n)}< C\varepsilon_n\big)\geq \Pi_\mathcal{A}\big(\norm{w}_{L_2(\mathcal{X};G)}<\varepsilon_n\big) + o\big(e^{-n\varepsilon_n^2}\big) \gtrsim e^{-n\varepsilon_n^2}.\]
Furthermore, note that the upper bound in Lemma \ref{lemma: RKSH approx} were derived for $h=\mathcal{A}f_0^{J_{\varepsilon}}$ with appropriately chosen $J_\varepsilon$. Then in view of Lemma \ref{lem:Af_0} (with $J=J_{\varepsilon_n}< k$ in the lemma) there exists an event $A_{n,2}$ with $P_X(A_{n,2}^c)=o(1)$ such that
\begin{align*}
\|h-\mathcal{A}f_0\|_{L_2\left(\mathcal{X}; P_n\right)}=\|\mathcal{A} f_0^{\perp J_{\varepsilon_n}} \|_{L_2\left(\mathcal{X}; P_n\right)} \lesssim \|\mathcal{A} f_0^{\perp J_{\varepsilon_n}}\|_{L_2\left(\mathcal{X}; G\right)}+o(\varepsilon_n)\lesssim \varepsilon_n,
\end{align*}
where $\mathcal{A} f_0^{\perp J}(x)=\sum_{j=J+1}^\infty \kappa_j f_{0,j}g_j(x)$ and we used that $(\alpha \wedge \beta)+p > 3/2 + 2\gamma$ in the first bound, verifying the statement on the event $A_n=A_{n,1}\cap A_{n,2}$ satisfying $P_X(A_n^c)=o(1)$, for some large $M>0$.

\subsection{Step 2: Population $L_2$ contraction rate in the direct problem}
Next we adapt the contraction rate result \eqref{eq: post rate empirical l2 norm} to the random design regression model and consider $L_2(\mathcal{X};G)$ contraction rate, i.e. we show that there exists a sequence of events $D_n$ with $P_{X,Y}(D_n^c)=o(1)$ such that
\begin{equation}\label{eq:population} 
 E_{f_0}\Pi\left[f\colon\|\mathcal{A}f-\mathcal{A}f_0\|_{L_2\left(\mathcal{X}; G\right)} \geq M_n\varepsilon_n\given\ X, Y\right]\mathds{1}_{D_n}\leq Ce^{-cM_n^2n\varepsilon_n^2}.
 \end{equation}

First note that in view of Lemma \ref{lem:norms} for $f\in\mathcal{F}_n$ defined in \eqref{def: sieve} we have on an event $A_{n,1}$ with $P_X(A_{n,1}^c)=o(1)$ that $\|\mathcal{A}f-\mathcal{A}f_0\|_{L_2\left(\mathcal{X}; G\right)}\leq C(\|\mathcal{A}f-\mathcal{A}f_0\|_{L_2\left(\mathcal{X}; P_n\right)}+\varepsilon_n)$. Furthermore, note that \eqref{eq: approx rkhs eq} and \eqref{eq: small ball 0} in view of Proposition 11.19 of \cite{fundamentalsBNP} imply that for some $c>0$
\begin{align}
\Pi[f\colon\ \norm{\mathcal{A}f-\mathcal{A}f_0}_{L_2\left(\mathcal{X};G\right)}\leq \epsilon_n ]\gtrsim e^{-c n\varepsilon_n^2}.\label{eq:decentered:small:ball}
\end{align}
In view of $\Pi(f\in \mathcal{F}_n^c)\leq e^{-n^{\frac{1}{1+2\gamma}} n\varepsilon_n^2}$, see Lemma \ref{lem:norms}, Lemma \ref{lemma: post mass events} gives $\Pi(f\in \mathcal{F}_n^c|X,Y)\leq e^{-n^{\frac{1}{1+2\gamma}}n\varepsilon_n^2/2}$. Furthermore, in view of \eqref{eq: post rate empirical l2 norm} there exists an event $A_{n,2}$ with $P_{X,Y}(A_{n,2}^c)=o(1)$ such that
\[E_X E_{Y|X}\Pi\left[f \colon\|\mathcal{A}f-\mathcal{A}f_0\|_{L_2\left(\mathcal{X}; P_n\right)} \geq M_n\varepsilon_n\given\ X, Y\right]\mathds{1}_{A_{n,2}}\lesssim e^{-cM_n^2n\varepsilon_n^2}.\]
Therefore, by taking $D_n=A_{n,1}\cap A_{n,2}$ we get that
\begin{align*}
E_{f_0}\Pi&\left[f\colon\|\mathcal{A}f-\mathcal{A}f_0\|_{L_2\left(\mathcal{X}; G\right)} \geq M_n\varepsilon_n\given X, Y\right]\mathds{1}_{A_n}\\
&\leq E_{f_0}\Pi\left[f\in \mathcal{F}_n\colon\|\mathcal{A}f-\mathcal{A}f_0\|_{L_2\left(\mathcal{X}; G\right)} \geq M_n\varepsilon_n\given X, Y\right]\mathds{1}_{A_n}+ E_{f_0}\Pi\left[f\in \mathcal{F}_n^c\given X, Y\right]\\
&\leq E_X \Big( E_{Y|X}\Pi\left[f \colon\|\mathcal{A}f-\mathcal{A}f_0\|_{L_2\left(\mathcal{X}; P_n\right)} \geq CM_n\varepsilon_n\given X, Y\right]\mathds{1}_{A_n}\Big)+e^{-n^{\frac{1}{1+2\gamma}}n\varepsilon_n^2/2}\\
&\lesssim e^{-\left(n^{\frac{1}{1+2\gamma}}\ \wedge\ M_n^2\right)n\varepsilon_n^2/2}.
\end{align*}

\subsection{Step 3: Population $L_2$ contraction rate in the indirect problem}
Next, we turn the contraction rate results for $\mathcal{A}f$ in the direct problem to contraction rates in the indirect problem for $f$. We show that there exists an event $A_n$ with $P_{X,Y}\left(A_n\right)\to 1$, such that for any $M_n\rightarrow\infty$
 \begin{equation}\label{eq: true post rate indirect problem}
  E_{f_0}\Pi\left[f\colon\ \norm{f-f_0}_{L_2(T; \mu)} \geq M_n \varepsilon_n^{\text{inv}}\given\ X\right]\mathds{1}_{A_n}\leq Ce^{-c\left(n^{\frac{1}{1+2\gamma}}\ \wedge\ M_n^2\right) n \varepsilon_n^2}.
  \end{equation}

The proof follows the lines of Lemma 2.1 of \cite{MR3757524}. Let us define
\[  \mathcal{S}_n\coloneqq \Big\{f\in L_2(\mathcal{T};\mu):\ \sum_{j> k_n} \langle f, e_j\rangle^2 \leq r\rho_n^2\Big\},\]
where the parameters $k_n$, $r>0$ and $\rho$ will be specified later, depending on the degree of ill-posedness. Then let us define the modulus of continuity as
\begin{equation}\label{eq: modulus continuity} 
\delta_n = \sup \left\{\norm{f-f_0}_{L_2(\mathcal{T}; \mu)}:\ f\in \mathcal{S}_n,\, \|\mathcal{A}f-\mathcal{A}f_0\|_{L_2(\mathcal{X}; G)}\leq M_n \varepsilon_n\right\}
\end{equation}
and note that in view of (3.4) from \cite{knapik:salomond:2018}
\begin{equation}\label{eq: rate modulus inv} 
\delta_n \lesssim M_n\kappa_{k_n}^{-1}\varepsilon_n + \rho_n + k_n^{-\beta}.
\end{equation}
Furthermore, the definition of $\delta_n$ implies that
\begin{align*}
&E_{f_0}\Pi\left[f\colon\ \norm{f-f_0}_{L_2(\mathcal{T}; \mu)} \geq \delta_n \given X,Y\right]\mathds{1}_{A_n}\\
&\qquad\leq E_{f_0}\Pi\left[f\in\mathcal{S}_n \colon\ \norm{f-f_0}_{L_2(\mathcal{T}; \mu)} \geq \delta_n \given X,Y\right]\mathds{1}_{A_n} + E_{f_0} \Pi\left[\mathcal{S}_n^c\ | X,Y\right]\mathds{1}_{A_n}\\
&\qquad\leq E_{f_0}\Pi\left[f\in\mathcal{S}_n \colon\ \norm{\mathcal{A}f-\mathcal{A}f_0}_{L_2(\mathcal{X}; G)} \geq M_n\varepsilon_n \given X,Y\right]\mathds{1}_{A_n} + E_{f_0} \Pi\left[\mathcal{S}_n^c\ | X,Y\right]\mathds{1}_{A_n}.
\end{align*}
In view of \eqref{eq:population} the first term on the right hand side tends to zero for any $A_n\subset D_n$. We show below both in the mildly and severely ill-posed inverse problems, that for appropriate choices of $k_n$, $\rho_n$ and $r>0$, we have $\delta_n\lesssim M_n \varepsilon_n^{inv}$ and the second term on the right hand side of the previous display tends to zero.

First we consider the mildly ill-posed problem and set
\[k_n=n^{\frac{1}{1+2\alpha+2p}},\quad \rho_n=M_nn^{-\frac{\alpha\wedge\beta}{1+2\alpha+2p}},\quad \varepsilon_n=n^{-\frac{\alpha\wedge\beta+p}{1+2\alpha+2p}}.\]
Then, in view of \eqref{eq: rate modulus inv}  we have  $\delta_n\lesssim M_n \varepsilon_n^{inv}$, hence it remains to show that
\begin{align}
E_{f_0} \Pi\left[\mathcal{S}_n^c\ | X,Y\right]\mathds{1}_{A_n}\lesssim e^{-cM_n^2n\varepsilon_n^2}.\label{UB:SN^c}
 \end{align}
Note that Lemma 5.2 of \cite{MR3757524} for $r>2(1+2\alpha)/\alpha$ (remarking that $\rho_n^2k_n^{1+2\alpha}=M_n^2n\varepsilon_n^2= :n\epsilon_n^2$) provides that  
\begin{equation}\label{eq: mass to control sn} \Pi\left[\mathcal{S}_n^c\right]\leq e^{-Cn(M_n\varepsilon_n)^2}.\end{equation}
This together with  \eqref{eq:decentered:small:ball} imply in view of Lemma \ref{lemma: post mass events}  (with $\epsilon_n = M_n\varepsilon_n$) the inequality \eqref{UB:SN^c}.

We now turn to the severely ill-posed case and set $k_n=J_{\varepsilon_n}=O(\log^{1/p} n)$ and $\rho_n=M_n\log^{-\beta/p} n$.
Since $J_{\varepsilon_n}^{\beta} e^{c J^p_{\varepsilon_n} }\asymp \varepsilon_n^{-1}$ it implies $\kappa_{k_n}^{-1}\varepsilon_n \lesssim e^{cJ_{\varepsilon_n}^p}\varepsilon_n\lesssim  J_{\varepsilon_n}^{\beta} \lesssim \log^{-\beta/p} n$, therefore, in view of the arguments  above it only remains to show \eqref{eq: mass to control sn}. We proceed as in the proof of Lemma 5.2 of \cite{MR3757524} and find that, for $Z_j\sim^{iid}N(0,1)$, whenever $t<\left(2\lambda_j\right)^{-1}$ for $j>k_n$, 
\begin{align*}
 \Pi\left[\mathcal{S}_n^c\right] &= P\Big(\sum_{j>k_n} \lambda_j Z_j^2 > r\rho_n^2\Big)\\
 &=  P\Big(\exp\Big(t\sum_{j>k_n} \lambda_j Z_j^2 \Big)> \exp\big(tr\rho_n^2\big)\Big)\\
 &\leq \exp\big(-tr\rho_n^2\big) E \exp\Big(t\sum_{j>k_n} \lambda_j Z_j^2 \Big)\\
 &= \exp\big(-tr\rho_n^2\big) \prod_{j>k_n}  E \exp\big(t\lambda_j Z_j^2 \big)\\
 &= \exp\big(-tr\rho_n^2\big) \prod_{j>k_n}   \big(1-2t\lambda_i\big)^{-1/2}.
\end{align*}
Since $\log(1-y)\geq -y/(1-y)$ for $y<1$,
\[
\log \Pi\left[\mathcal{S}_n^c\right]\leq -rt\rho_n^2 + \sum_{j>k_n} \frac{t\lambda_j}{1-2t\lambda_j}.
\]
Choosing $t=\lambda_{k_n}^{-1}/4$, the second term on the right-hand side above is upper-bounded by a constant. As
\begin{align*}
t\rho_n^2&\asymp M_n^2 \lambda_{k_n}^{-1} \log^{-2\beta/p} n
\asymp M_n^2k_n^{\alpha} e^{\xi k_n^p} \log^{-2\beta/p} n\\
&\asymp  M_n^2k_n^{\alpha} \left(nk_n^{-\alpha}\right)^{\xi/(\xi+2c)} \log^{-2\beta/p} n\\
&\asymp M_n^2n^{\xi/(\xi+2c)}\left(\log n\right)^{-\frac{2\beta}{p}+\frac{2c\alpha}{p(\xi + 2c)}} = M_n^2n\varepsilon_n^2,
\end{align*}
the result is proved with $r$ large enough.

\subsection{Step 4: Contraction rate for the VB posterior}

Finally, we replace the true posterior by the variational posterior $\Psi^*$ in \eqref{eq: true post rate indirect problem}. We can apply Lemma \ref{lemma: var rates transition} with $\Delta_n=n\left(M_n\varepsilon_n\right)^2$ so that, for $M_n\to \infty$,
 \begin{align*}
  E_{f_0}&\Psi^*\left[f\colon\ \norm{f-f_0}_{L_2(T; \mu)} \geq M_n\varepsilon_n^{\text{inv}}\right]\mathds{1}_{A_n}\\
  &\leq \frac{2}{\left(n^{\frac{1}{1+2\gamma}}\ \wedge\ M_n^2\right)n\varepsilon_n^2}\left(E_{f_0} KL(\Psi^*|\! |\Pi[\cdot\given X,Y])\mathds{1}_{A_n(X,Y)} + Ce^{-\left(n^{\frac{1}{1+2\gamma}}\ \wedge\ M_n^2\right)n\varepsilon_n^2/2}\right).
 \end{align*} 
   Since, $M_n\to\infty$ and $n\varepsilon_n^2\to\infty$, the conclusion then follows if $E_0 KL(\Psi^*|\! |\Pi[\cdot\given X,Y])\leq Cn\varepsilon_n^2$.
According to Lemma 3 in \cite{JMLR:v23:21-1128} and \eqref{eq: KL post var}, for any $h\in\mathbb{H}_{\mathcal{A}}$,
\begin{align*}
 E_{f_0}& KL(\Psi^*|\! |\Pi[\cdot\given X,Y]) \leq \sigma^{-2}\big(n\norm{\mathcal{A}f_0-h}^2_{L_2(\mathcal{X};G)}+\norm{h}^2_{\mathbb{H}_{\mathcal{A}}}E_x \norm{ K_{\boldsymbol{\mathcal{A}f}\boldsymbol{\mathcal{A}f}}-Q_{\boldsymbol{\mathcal{A}f}\boldsymbol{\mathcal{A}f}}}\\
 &\qquad+E_x Tr\left(K_{\boldsymbol{\mathcal{A}f}\boldsymbol{\mathcal{A}f}}-Q_{\boldsymbol{\mathcal{A}f}\boldsymbol{\mathcal{A}f}}\right) \big).
\end{align*}
Then in view of Lemma \ref{lemma: RKSH approx}, for $n$ large enough, there exists $h\in\mathbb{H}_{\mathcal{A}}$ such that $\norm{\mathcal{A}f_0-h}_{L_2(\mathcal{X};G)}\leq \varepsilon_n$ and $\norm{h}_{\mathbb{H}_{\mathcal{A}}}\leq n\varepsilon_n^2$. Hence the claimed upper bound follows from the assumptions on the trace and spectral norm term.

\subsection{Technical lemmas}

\begin{lemma}[RKHS approximation for random series priors]\label{lemma: RKSH approx}
Let $f_0\in \bar{H}^{\beta}$, $\beta>0$, and consider the centered GP prior $\Pi_{\mathcal{A}}$  on $\mathcal{A}f$ given in \eqref{def:prior:inv}. Then
\[\underset{h\in\mathbb{H}_{\mathcal{A}}: \norm{h-\mathcal{A}f_0}_{L_2(\mathcal{X};G)}\leq \epsilon}{\inf} \norm{h}^2_{\mathbb{H}_{\mathcal{A}}} \lesssim
\begin{cases} \epsilon^{-\frac{2\alpha-2\beta+1}{\beta+p}} &\mbox{if $\kappa_j\asymp j^{-p}, \lambda_j\asymp j^{-1-2\alpha}$ for $\alpha>0,p\geq0$, $\beta\leq 2\alpha+1$}\\
&\\
J_\epsilon^{\alpha-2\beta}e^{\xi J_\epsilon^p}& \mbox{if $\kappa_j\asymp e^{-c j^p},   \lambda_j\asymp j^{-\alpha}e^{-\xi j^p} $, for $\alpha \geq 0, \xi>0\text{ or }$}\\
&\mbox{$\xi=0, \alpha\geq 2\beta$,\text{ and $p\geq1$}},
\end{cases}\] 
where $J_\epsilon$ is the smallest integer such that ${\max}_{ j\geq J_\epsilon} (\kappa_j j^{-\beta})  \|f_0\|_{\beta}\leq \epsilon$.
\end{lemma}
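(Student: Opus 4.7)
The plan is to produce the infimum by an explicit candidate and then optimize the truncation level, exploiting the fact that the prior covariance of $\mathcal{A}f$ is diagonal in the SVD basis $(g_j)_j$. Concretely I would take
\[
h = \mathcal{A}f_0^{J_\epsilon} = \sum_{j=1}^{J_\epsilon} \kappa_j f_{0,j}\, g_j, \qquad f_{0,j} := \langle f_0, e_j\rangle,
\]
with $J_\epsilon$ as in the statement. This $h$ is a finite linear combination of the $g_j$'s, so it lies in $\mathbb{H}_\mathcal{A}$, and by the explicit form \eqref{def:rkhs:inverse} its RKHS norm is simply
\[
\|h\|^2_{\mathbb{H}_\mathcal{A}} = \sum_{j=1}^{J_\epsilon} (\kappa_j f_{0,j})^2 / (\lambda_j \kappa_j^2) = \sum_{j=1}^{J_\epsilon} f_{0,j}^2/\lambda_j.
\]

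The first step is to check admissibility. Parseval in $L_2(\mathcal{X};G)$ yields
\[
\|\mathcal{A}f_0 - h\|^2_{L_2(\mathcal{X};G)} = \sum_{j>J_\epsilon} \kappa_j^2 f_{0,j}^2 \leq \Big(\max_{j\geq J_\epsilon}\kappa_j j^{-\beta}\Big)^2 \sum_{j>J_\epsilon} j^{2\beta}f_{0,j}^2 \leq \Big(\max_{j\geq J_\epsilon}\kappa_j j^{-\beta}\Big)^2 \|f_0\|_\beta^2 \leq \epsilon^2,
\]
where the last inequality is just the definition of $J_\epsilon$. So $h$ is feasible in the infimum.

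The second step is to upper bound $\|h\|^2_{\mathbb{H}_\mathcal{A}}$ in each regime by the same trick of inserting $j^{-2\beta}\cdot j^{2\beta}$ and pulling out a maximum. In the polynomial case $1/\lambda_j \asymp j^{1+2\alpha}$, and $\kappa_j j^{-\beta}\asymp j^{-(p+\beta)}$ gives $J_\epsilon\asymp \epsilon^{-1/(p+\beta)}$; hence
\[
\sum_{j=1}^{J_\epsilon} f_{0,j}^2/\lambda_j \lesssim \sum_{j=1}^{J_\epsilon} j^{1+2\alpha-2\beta}\,(j^{2\beta}f_{0,j}^2) \lesssim J_\epsilon^{(1+2\alpha-2\beta)\vee 0}\|f_0\|_\beta^2 \lesssim \epsilon^{-(2\alpha-2\beta+1)/(\beta+p)},
\]
where I separate the case $\beta<\alpha+1/2$ (in which $j^{1+2\alpha-2\beta}$ is increasing, so the bound is saturated at $j=J_\epsilon$) from $\beta\geq \alpha+1/2$ (in which the partial sum is uniformly bounded and the stated non-positive exponent on $\epsilon$ is automatic). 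In the exponential case $1/\lambda_j\asymp j^\alpha e^{\xi j^p}$ and the weight $j^{\alpha-2\beta}e^{\xi j^p}$ is eventually monotone increasing, so
\[
\sum_{j=1}^{J_\epsilon} f_{0,j}^2/\lambda_j \lesssim \Big(\max_{j\leq J_\epsilon} j^{\alpha-2\beta}e^{\xi j^p}\Big)\|f_0\|_\beta^2 \lesssim J_\epsilon^{\alpha-2\beta}e^{\xi J_\epsilon^p}\|f_0\|_\beta^2,
\]
which is the stated bound.

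I do not expect a genuine obstacle here: the argument is a clean truncation/interpolation via the Sobolev weight, parallel to RKHS computations already available in the direct-problem literature (cf.\ \cite{MR3757524}), only rewritten for $\mathcal{A}f_0$ in the basis $(g_j)$ rather than for $f_0$ in $(e_j)$. The bookkeeping items I would still need to verify are that $\kappa_j j^{-\beta}$ is eventually monotone (immediate from the assumed asymptotics in both regimes, so $J_\epsilon$ is well defined and finite for every $\epsilon>0$), and, in the exponential case, that the monotonicity of $j^{\alpha-2\beta}e^{\xi j^p}$ actually kicks in before $J_\epsilon$, which holds for $\epsilon$ small enough because $J_\epsilon\to\infty$. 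These are routine and do not affect the stated asymptotic rates.
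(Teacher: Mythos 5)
Your proposal is correct and takes essentially the same route as the paper: the paper's proof also uses the truncated expansion $w^{J}=\sum_{j\le J}\langle \mathcal{A}f_0,g_j\rangle g_j=\mathcal{A}f_0^{J}$ as the explicit candidate, bounds its RKHS norm by $\max_{1\le j\le J}\big(j^{-2\beta}\lambda_j^{-1}\big)\|f_0\|_\beta^2$ and the approximation error by $\max_{j\ge J}\big(\kappa_j^2 j^{-2\beta}\big)\|f_0\|_\beta^2$ via the same weight-insertion trick, and then substitutes $J_\epsilon\asymp(\|f_0\|_\beta/\epsilon)^{1/(\beta+p)}$ in the mildly ill-posed case and keeps the bound in terms of $J_\epsilon$ in the severely ill-posed case. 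The only differences are cosmetic (your explicit case split at $\beta=\alpha+1/2$, which the paper absorbs into the single $\max_{1\le j\le J}$ bound).
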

\begin{proof}
For simplicity let us denote by $w=\mathcal{A}f_0$ and note that for any $J\in \N$, the function $w^J(x)=\sum_{j=1}^J w_j  g_j(x)\in \mathbb{H}_{\mathcal{A}}$, with $ w_j=\langle w,g_j\rangle_{L_2(\mathcal{X};G)}$. Then in view of \eqref{def:rkhs:inverse} and using the notation $f_{0,j}= \langle f_0,e_j\rangle_{L_2(\mathcal{T};\mu)}$,
\begin{align*}
\big\|{w^J}\big\|_{\mathbb{H}_{\mathcal{A}}}^2&= \sum_{j=1}^J \kappa_j^{-2}\lambda_j^{-1} w_j^2
=  \sum_{j=1}^J  j^{-2\beta}\lambda_j^{-1} f_{0,j}^2 j^{2\beta}
\leq {\max}_{1\leq j\leq J}  (j^{-2\beta}\lambda_j^{-1}) \|f_0\|_{\beta}^2,\\
\norm{w^J-w}_{L_2(\mathcal{X};G)}^2&=\sum_{j={J+1}}^\infty w_j^2=\sum_{j={J+1}}^\infty \kappa_j^2j^{-2\beta} f_{0,j}^2j^{2\beta}\leq {\max}_{ j\geq J} (\kappa_j^2j^{-2\beta})  \|f_0\|_{\beta}^2.
\end{align*}

Then, in the mildly ill-posed inverse problem (with $\kappa_j\asymp j^{-p}, \lambda_j\asymp j^{-1-2\alpha}$), the smallest $J_\epsilon\in\mathbb{N}$ such that ${\max}_{ j\geq J_\epsilon} (\kappa_j j^{-\beta})  \|f_0\|_{\beta}\leq \epsilon$ satisfies that $J_\epsilon\asymp  (\|f_0\|_{\beta}/\epsilon)^{1/(\beta+p)}$, resulting in $\big\|{w^{J_\epsilon}}\big\|_{\mathbb{H}_{\mathcal{A}}}^2\lesssim \epsilon^{-\frac{2\alpha-2\beta+1}{\beta+p}}$ and proving the first statement. In the severely ill-posed case (with $\kappa_j\asymp e^{-c j^p},   \lambda_j\asymp j^{-\alpha}e^{-\xi j^p} $) the smallest $J_\epsilon\in\mathbb{N}$ such that $ {\max}_{ j\geq J_\epsilon} (\kappa_j j^{-\beta})  \|f_0\|_{\beta}\leq \epsilon$ implies that $\norm{w^{J_\epsilon}-w}_{L_2(\mathcal{X};G)}^2\lesssim J_\epsilon^{\alpha-2\beta}e^{\xi J_\epsilon^p}$
\end{proof}

\begin{lemma}[Small ball probability for random series priors]\label{lemma: small ball prob 0}
Consider the centered GP prior $\Pi_{\mathcal{A}}$  on $\mathcal{A}f$ given in \eqref{def:prior:inv}. Then there exists $C>0$ depending on $\alpha, p, c,\xi$ such that for any $\epsilon>0$ small enough
\[-\log \Pi_{\mathcal{A}} \left(w:\, \norm{w}_{L_2(\mathcal{X};G)}<\epsilon\right) \leq C\begin{cases} \epsilon^{-1/(\alpha+p)} &\mbox{if  $\kappa_j\asymp j^{-p}, \lambda_j\asymp j^{-1-2\alpha}$ for $\alpha>0,p\geq0$ }, \\
&\\
\log^{(p+1)/p} \frac{1}{\epsilon}& \mbox{if $\kappa_j\asymp e^{-c j^p},   \lambda_j\asymp j^{-\alpha}e^{-\xi j^p} $},\\
&\mbox{for $\alpha \geq 0, \xi>0\text{ or }\xi=0, \alpha\geq 2\beta, \text{ and $p\geq1$}$}.
\end{cases}\] 
\end{lemma}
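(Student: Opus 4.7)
The approach is to exploit the Karhunen--Lo\`eve representation $w=\sum_{j\geq 1}\sigma_j Z_j g_j$ with $\sigma_j^2=\lambda_j\kappa_j^2$ and $Z_j\stackrel{iid}{\sim}N(0,1)$. Since $(g_j)$ is an orthonormal basis of $L_2(\mathcal{X};G)$, we have $\|w\|^2_{L_2(\mathcal{X};G)}=\sum_j \sigma_j^2 Z_j^2$, so the statement reduces to a lower bound on the small-ball probability of a weighted sum of $\chi_1^2$ variables whose weights $\sigma_j^2$ decay either polynomially, $\sigma_j^2\asymp j^{-1-2(\alpha+p)}$, or exponentially (with the abbreviation $\tau=\xi+2c$), $\sigma_j^2\asymp j^{-\alpha}e^{-\tau j^p}$.

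The first step is to split the sum at a threshold $J=J(\epsilon)$. By independence,
\[
P\Bigl(\sum_{j\geq 1}\sigma_j^2 Z_j^2<\epsilon^2\Bigr)\geq P\Bigl(\sum_{j>J}\sigma_j^2 Z_j^2<\tfrac{\epsilon^2}{2}\Bigr)\cdot P\Bigl(\sum_{j\leq J}\sigma_j^2 Z_j^2<\tfrac{\epsilon^2}{2}\Bigr).
\]
I pick $J$ so that $\sum_{j>J}\sigma_j^2\leq \epsilon^2/8$; Markov's inequality then bounds the tail factor below by $1/2$, and this fixes $J\asymp \epsilon^{-1/(\alpha+p)}$ in the mildly and $J\asymp \tau^{-1/p}(\log(1/\epsilon))^{1/p}$ in the severely ill-posed case. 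For the head I would use the inclusion $\{\sum_{j\leq J}\sigma_j^2 Z_j^2<\epsilon^2/2\}\supset\{|Z_j|<\epsilon/(\sigma_j\sqrt{2J}),\,\forall j\leq J\}$, independence, and the elementary bound $-\log P(|Z|<a)\leq \log_+(1/a)+C$ to reduce the problem to estimating
\[
-\log P\Bigl(\sum_{j\leq J}\sigma_j^2 Z_j^2<\tfrac{\epsilon^2}{2}\Bigr)\lesssim \sum_{j\leq J}\log_+\!\Bigl(\tfrac{\sigma_j\sqrt{J}}{\epsilon}\Bigr)+J.
\]

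The final step is to plug in the two parameter regimes and evaluate this sum. In the polynomial case $\log(\sigma_j\sqrt{J}/\epsilon)\asymp \tfrac12\log J+\log(1/\epsilon)-(\alpha+p+\tfrac12)\log j$; using $\sum_{j\leq J}\log j\asymp J\log J-J$ via Stirling together with the relation $\log J=\tfrac{1}{\alpha+p}\log(1/\epsilon)$, the $J\log(1/\epsilon)$ contribution and the $(\alpha+p+\tfrac12)J\log J$ contribution cancel to leading order, leaving the claimed $O(J)=O(\epsilon^{-1/(\alpha+p)})$. In the exponential case $\log(\sigma_j\sqrt{J}/\epsilon)$ is dominated by $\tfrac{\tau}{2}j^p$, so the sum behaves like $\tfrac{\tau}{2(p+1)}J^{p+1}\asymp(\log(1/\epsilon))^{(p+1)/p}$, which is the asserted order and dominates the $O(J)$ remainder. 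The main technical obstacle is precisely the cancellation in the polynomial case: a crude bound of each factor by $\log(1/\epsilon)$ would produce the suboptimal rate $\epsilon^{-1/(\alpha+p)}\log(1/\epsilon)$, so one has to retain the $-(\alpha+p+\tfrac12)\log j$ contribution and track the leading constants through Stirling so that the two $J\log J$-scale terms annihilate exactly. The severely ill-posed regime is comparatively cleaner, since the exponential term in $j$ automatically dominates all logarithmic corrections.
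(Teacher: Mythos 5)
Your proof is correct, and it takes a genuinely different (and more unified) route than the paper. The paper handles the two regimes separately: the polynomial case is simply cited from Lemma 11.47 of Ghosal and van der Vaart, while the exponential case is done by splitting the series at $J$, bounding the head via a Gaussian likelihood-ratio comparison (replacing each variance $\lambda_j\kappa_j^2$ by the smallest one and paying the product of ratios of standard deviations, then invoking the CLT to make the resulting probability at least $1/2$) and the tail via Markov. You instead give a single self-contained argument for both regimes: the same Markov bound for the tail, but a coordinate-wise ``hyperrectangle'' inclusion $\{|Z_j|<\epsilon/(\sigma_j\sqrt{2J})\ \forall j\le J\}$ for the head, reducing everything to $\sum_{j\le J}\log_+(\sigma_j\sqrt{J}/\epsilon)+O(J)$. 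This is more elementary (no CLT, no external lemma), and you correctly identify the one delicate point: in the polynomial regime a crude bound would give a spurious $\log(1/\epsilon)$ factor, and one must check that the coefficient of $J\log J$ vanishes, namely $-(\alpha+p+\tfrac12)+\tfrac12+(\alpha+p)=0$ after substituting $\log(1/\epsilon)=(\alpha+p)\log J+O(1)$ and Stirling. That cancellation is exact and independent of the implied constants in $\sigma_j\asymp j^{-1/2-(\alpha+p)}$ and $J\asymp\epsilon^{-1/(\alpha+p)}$, so the argument is sound; the exponential case is, as you say, cruder since each summand is already $O(\log(1/\epsilon))=O(\tau J^p)$ and summing gives $O(J^{p+1})$. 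The trade-off is that the paper's likelihood-ratio trick generalizes more readily to situations where the coordinatewise box is too lossy, whereas your computation makes the polynomial-case constant tracking fully explicit rather than delegated to a textbook lemma.
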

\begin{proof}
The first case (polynomial decay) was derived in Lemma 11.47 from \cite{fundamentalsBNP}. In the second case, for $J\geq1$ and $Z_j\sim^{iid}N(0,1)$,
\[ \Pi_{\mathcal{A}}\big(w:\, \norm{w}_{L_2(\mathcal{X};G)}<\epsilon\big) \geq P\Big(\sum_{j\leq J} \lambda_j\kappa_j^{2} Z_j^2<\epsilon^2/2\Big)P\Big(\sum_{j>J}  \lambda_j\kappa_j^{2} Z_j^2<\epsilon^2/2\Big).\]
Note that the likelihood ratio of centered Gaussians with standard deviations $\sigma\geq\tau$ satisfy $\psi_\sigma/\psi_\tau(x) \geq \tau/\sigma$ uniformly on $x\in\mathbb{R}$. Therefore, the first term on the rhs of the preceding display is bounded from below by
\begin{align*}
&P\Big(\sum_{j\leq J} j^{-\alpha}e^{-(\xi+2c) j^p} Z_j^2<c\epsilon^2\Big)\geq \\
&e^{(c+\xi/2)\Big(\sum_{j=1}^J j^p -J^{p+1}\Big)} \prod_{j=1}^J \Big(\frac{j}{J}\Big)^{\alpha/2} P\Big(\sum_{j\leq J} J^{-\alpha}e^{-(\xi+2c) J^p} Z_j^2<c\epsilon^2\Big).
\end{align*}
The logarithm of the leading factor is equivalent to $-\frac{p}{p+1}(c+\xi/2)J^{p+1}$ as $J\to\infty$. The second is lower bounded by $(J!/J^J)^{\alpha/2}\geq e^{-J\alpha/2}$. By the central limit theorem, the probability in the last factor is greater than $1/2$ as $J\to\infty$ as long as $J^{\alpha-1}e^{(\xi+2c) J^p} \epsilon^2c \geq 2$. Also, by Markov's inequality,
\[ P\Big(\sum_{j>J} \lambda_j\kappa_j^2 Z_j^2<\epsilon^2/2\Big)\geq 1-2\epsilon^{-2}\sum_{j>J}E(Z_j^2\lambda_j\kappa_j^2)\geq 1-c_1\epsilon^{-2}\sum_{j>J}j^{-\alpha}e^{-(\xi+2c) j^p}.\]
Since the above sum is smaller than $c_2\epsilon^{-2} J^{-\alpha}e^{-(\xi+2c) J^p}$ (following form the assumption $p\geq 1$ and the sum of geometric series), the above probability is greater than $1/2$ whenever $J^{\alpha}e^{(\xi+2c) J^p}\geq 2c_2\epsilon^{-2}$. Therefore, as long as $J^{\alpha-1}e^{(\xi+2c) J^p} \epsilon^2 \geq (2/c)\vee (2c_2)$,
\[ -\log \Pi_{\mathcal{A}}\left(w: \norm{w}_{L_2(\mathcal{X};G)}<\epsilon\right) \lesssim J^{p+1}.\]
The above conditions are satisfied for $J\asymp \log^{1/p} \epsilon^{-1}$, concluding the proof of the lemma.
\end{proof}

\begin{lemma}[Theorem 5 of \cite{ray2022variational}]\label{lemma: var rates transition}
Let $C_n$ be a measurable subset of the parameter space $L_2\left(\mathcal{T}; \mu\right)$, $A_n$ be an event and $Q$ a distribution on $L_2\left(\mathcal{T}; \mu\right)$. If there exists $C>0$ and $\Delta_n\rightarrow\infty$ such that
\[ E_{f_0}\Pi\left[C_n^c\given X,Y\right]\mathds{1}_{A_n} \leq Ce^{-\Delta_n},\]
then
\[E_{f_0}Q\left(C_n^c\right)\mathds{1}_{A_n} \leq \frac{2}{\Delta_n}\left[E_{f_0} KL\big(Q|\! |\Pi[\cdot\given X ,Y]\big)+Ce^{-\Delta_n/2}\right].\]
\end{lemma}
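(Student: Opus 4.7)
The plan is to combine Markov's inequality, applied to the hypothesis, with the Donsker--Varadhan variational characterization of the Kullback--Leibler divergence. The Markov step produces a high-probability subevent of $A_n$ on which the true posterior mass of $C_n^c$ is exponentially small; the Donsker--Varadhan step then converts this smallness into smallness of $Q(C_n^c)$, at the price of $\operatorname{KL}(Q\|\Pi[\cdot|X,Y])$.

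I would first set $\tilde A_n = A_n \cap \{\Pi(C_n^c|X,Y) \leq e^{-\Delta_n/2}\}$. By Markov applied to the hypothesis $E_{f_0}[\Pi(C_n^c|X,Y)\mathbf{1}_{A_n}] \leq Ce^{-\Delta_n}$, one gets $P_{f_0}(A_n\setminus \tilde A_n)\leq Ce^{-\Delta_n/2}$, so on this ``bad'' part the trivial bound $Q(C_n^c)\leq 1$ already contributes at most $Ce^{-\Delta_n/2}$ to the target expectation. On $\tilde A_n$ I would instead apply, conditionally on $(X,Y)$, the Donsker--Varadhan inequality $\int h\,dQ \leq \log\int e^h\,d\Pi[\cdot|X,Y] + \operatorname{KL}(Q\|\Pi[\cdot|X,Y])$ with the test function $h = \log(1/\Pi(C_n^c|X,Y))\cdot\mathbf{1}_{C_n^c}$. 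Because $h$ is constant in $f$ on $C_n^c$ and zero elsewhere, its exponential $\Pi[\cdot|X,Y]$-moment equals $1 + \Pi(C_n|X,Y) \leq 2$, which yields $Q(C_n^c)\log(1/\Pi(C_n^c|X,Y))\leq \operatorname{KL}(Q\|\Pi[\cdot|X,Y]) + \log 2$. On $\tilde A_n$ the log-factor on the left is at least $\Delta_n/2$, so $Q(C_n^c)\mathbf{1}_{\tilde A_n} \leq (2/\Delta_n)(\operatorname{KL}(Q\|\Pi[\cdot|X,Y]) + \log 2)$. Taking $E_{f_0}$ of both pieces and summing yields the stated inequality, after absorbing the additive $\log 2$ into the exponential remainder by adjusting $C$.

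The key technical choice is the Donsker--Varadhan test function: one needs $\log(1/\Pi(C_n^c|X,Y))$ in front of $\mathbf{1}_{C_n^c}$ in order to keep the exponential moment bounded by an absolute constant while simultaneously producing a $\Delta_n$-sized lower bound on $\tilde A_n$. A naive Pinsker-style $Q(C_n^c)\lesssim \sqrt{\operatorname{KL}}$ would destroy the exponential contraction rate of the true posterior and hence be useless for applications like Step 4 of Theorem \ref{th: post rates}. The only other point to check is that the $(X,Y)$-dependence of $h$ is harmless, which is automatic since Donsker--Varadhan is applied conditionally on $(X,Y)$ and all integrals are finite under the hypothesis.
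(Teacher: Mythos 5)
The paper does not prove this lemma itself; it is quoted verbatim as Theorem 5 of \cite{ray2022variational}, so the comparison is against the standard argument behind that result. Your overall idea (Donsker--Varadhan against the posterior, applied conditionally on $(X,Y)$) is the right one, but your specific test function creates a genuine gap: the inequality you actually establish is strictly weaker than the one stated, and the final step of your argument is false. Tracking your bounds, the Markov split contributes $Ce^{-\Delta_n/2}$ from $A_n\setminus\tilde A_n$ (with no $2/\Delta_n$ prefactor), and the Donsker--Varadhan step on $\tilde A_n$ contributes $\tfrac{2}{\Delta_n}\big(\operatorname{KL}+\log 2\big)$, so in total
\[
E_{f_0}Q(C_n^c)\mathds{1}_{A_n}\;\leq\;\frac{2}{\Delta_n}E_{f_0}\operatorname{KL}\big(Q\,\|\,\Pi[\cdot\given X,Y]\big)+\frac{2\log 2}{\Delta_n}+Ce^{-\Delta_n/2}.
\]
The term $\tfrac{2\log 2}{\Delta_n}$ decays only polynomially in $\Delta_n$, so it cannot be ``absorbed into the exponential remainder by adjusting $C$'': for any fixed $C'$ one has $\tfrac{2\log 2}{\Delta_n}\gg\tfrac{2C'}{\Delta_n}e^{-\Delta_n/2}$ once $\Delta_n$ is large, and the same objection applies to the un-prefactored $Ce^{-\Delta_n/2}$ piece. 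What you prove does still suffice for the only use of the lemma in the paper (Step 4 of Theorem \ref{th: post rates}, where one merely needs the right-hand side to vanish), but it is not the stated statement.

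The fix is to drop both the Markov split and the data-dependent test function. Apply Donsker--Varadhan with the deterministic function $h=\tfrac{\Delta_n}{2}\mathds{1}_{C_n^c}$ and use $\log\int e^h\,d\Pi[\cdot\given X,Y]=\log\big(1+(e^{\Delta_n/2}-1)\Pi(C_n^c\given X,Y)\big)\leq e^{\Delta_n/2}\,\Pi(C_n^c\given X,Y)$, which gives, pointwise in $(X,Y)$,
\[
Q(C_n^c)\;\leq\;\frac{2}{\Delta_n}\Big(\operatorname{KL}\big(Q\,\|\,\Pi[\cdot\given X,Y]\big)+e^{\Delta_n/2}\,\Pi(C_n^c\given X,Y)\Big).
\]
Multiplying by $\mathds{1}_{A_n}$, taking $E_{f_0}$ and invoking the hypothesis $E_{f_0}\Pi(C_n^c\given X,Y)\mathds{1}_{A_n}\leq Ce^{-\Delta_n}$ yields exactly the claimed bound, with no leftover $\log 2$ and no bad event to dispose of. Your instinct that a constant-on-$C_n^c$ test function scaled to $\Delta_n$ is needed (rather than Pinsker) is correct; the point you missed is that the scale should be the deterministic $\Delta_n/2$ rather than the random $\log(1/\Pi(C_n^c\given X,Y))$, because the exponential moment then only needs the elementary bound $\log(1+x)\leq x$ rather than a bound by an absolute constant.
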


\begin{lemma}\label{lemma: post mass events}
Let $\mathcal{S}_n \subset L_2(\mathcal{T};\mu)$ be a measurable event such that for some $\epsilon_n\to 0$, $n\epsilon_n^2\to \infty$, and $ C>1$ large enough,
\[ \frac{\Pi [\mathcal{S}_n]}{\Pi[f\colon\ \norm{\mathcal{A}f-\mathcal{A}f_0}_{L_2\left(\mathcal{X};G\right)}\leq \epsilon_n ] } \leq e^{-Cn\epsilon_n^2}.\]
Then there exists an event $A_n\subset\mathcal{X}^n$, with $P_{f_0}\left(A_n\right)\to1$, and $C'>C/2$ such that
\[ E_{f_0}\Pi\left[\mathcal{S}_n\given X,Y\right]\mathds{1}_{A_n}\lesssim e^{-C'n\epsilon_n^2}.\]
\end{lemma}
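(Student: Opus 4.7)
The plan is a classical prior-mass-to-posterior-mass argument, specialised to our inverse Gaussian regression setting. First I would write the posterior probability as the ratio of integrated likelihood ratios,
\[
\Pi[\mathcal{S}_n\given X,Y] = \frac{\int_{\mathcal{S}_n} \prod_{i=1}^n (p_f/p_{f_0})(Y_i\given x_i)\,d\Pi(f)}{\int \prod_{i=1}^n (p_f/p_{f_0})(Y_i\given x_i)\,d\Pi(f)}.
\]
The numerator is the easy part: by Fubini and the unit $P_{f_0}$-mean of likelihood ratios, its $P_{f_0}$-expectation equals $\Pi[\mathcal{S}_n]$.

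The main work is to construct $A_n$ on which the denominator is bounded from below. Set $B_n^{\ast}=\{f:\|\mathcal{A}f-\mathcal{A}f_0\|_{L_2(\mathcal{X};G)}\le \epsilon_n\}$. In the Gaussian regression model a direct calculation gives
\[
\log \frac{p_f}{p_{f_0}}(Y_i\given x_i) = Z_i(\mathcal{A}f(x_i)-\mathcal{A}f_0(x_i)) - \tfrac12(\mathcal{A}f(x_i)-\mathcal{A}f_0(x_i))^2,\qquad Z_i\sim N(0,1),
\]
so the per-sample KL divergence and log-likelihood variance under $P_{f_0}$ are both of order $\|\mathcal{A}f-\mathcal{A}f_0\|_{L_2(G)}^{2}$; hence $B_n^{\ast}$ is a genuine KL/variance ball of $f_0$ of size $\asymp n\epsilon_n^2$. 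Restricting and renormalising the prior on $B_n^\ast$, the standard evidence lower bound (Lemma 8.1 of \cite{MR2332274} or Lemma 8.10 of \cite{fundamentalsBNP}), sharpened from Chebyshev to sub-Gaussian by exploiting the explicit Gaussian structure above, produces for any fixed $\delta>0$ an event $\tilde A_n$ with $P_{f_0}(\tilde A_n^c)\le e^{-c_0 n\epsilon_n^2}$ on which the denominator is at least $\Pi[B_n^{\ast}]\,e^{-(1+\delta)n\epsilon_n^2}$.

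To conform with the statement's requirement $A_n\subset \mathcal{X}^n$, I would then take the $X$-section $A_n=\{X:P_{Y\given X}(\tilde A_n^c)\le\eta_n\}$ with $\eta_n=e^{-c_0 n\epsilon_n^2/2}$, so that Markov's inequality gives $P_X(A_n^c)\le e^{-c_0 n\epsilon_n^2/2}\to 0$. Combining the two ingredients and successively taking $E_{Y\given X}$ and then $E_X$,
\[
E_{f_0}\Pi[\mathcal{S}_n\given X,Y]\mathds{1}_{A_n} \le \frac{\Pi[\mathcal{S}_n]}{\Pi[B_n^{\ast}]}\,e^{(1+\delta)n\epsilon_n^2}+\eta_n \le e^{-(C-1-\delta)n\epsilon_n^2}+e^{-c_0 n\epsilon_n^2/2},
\]
where the last step uses the hypothesis. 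Choosing $C$ large enough so that both $C-1-\delta$ and $c_0/2$ exceed $C/2$ (which is possible since $c_0$ grows with $\delta$ while the hypothesis only constrains $C$ from below) yields the announced $C'>C/2$.

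The main obstacle is the sub-Gaussian refinement of the evidence lower bound. Conditionally on $X$, the integrated log-ratio $\bar L_n := \sum_i \int_{B_n^\ast}\log(p_f/p_{f_0})(Y_i\given x_i)\,d\Pi(f\mid B_n^{\ast})$ is an affine function of the Gaussian noise $(Z_i)_i$ plus a deterministic quadratic, with conditional variance $\sum_i \bar W(x_i)^2$ for $\bar W(x):=\int_{B_n^\ast}(\mathcal{A}f-\mathcal{A}f_0)(x)\,d\Pi(f\mid B_n^\ast)$. Jensen's inequality together with the $L_2(G)/L_2(P_n)$ equivalence supplied by Lemma \ref{lem:norms} on a large $X$-event bounds this variance by a constant multiple of $n\epsilon_n^2$, and a standard Gaussian tail inequality then supplies the required exponential concentration. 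All remaining steps are routine.
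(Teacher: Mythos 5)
Your overall strategy is the paper's: bound the posterior mass of $\mathcal{S}_n$ by the prior mass ratio via an evidence lower bound for the denominator, using that the ball $\{f:\norm{\mathcal{A}f-\mathcal{A}f_0}_{L_2(\mathcal{X};G)}\le\epsilon_n\}$ is contained in a Kullback--Leibler neighbourhood of $P_{f_0}$. The paper simply invokes Lemma 8.10 of \cite{fundamentalsBNP} (Chebyshev version), which yields an event of probability tending to one --- not exponentially fast --- on which $\int (dP_f/dP_{f_0})\,d\Pi(f)\ge \Pi[B_2(f_0;\epsilon_n)]e^{-cn\epsilon_n^2}$, and then concludes exactly as you do. The key observation you are missing is that this weaker control is already sufficient: the indicator $\mathds{1}_{A_n}$ removes the bad event entirely from the expectation, so the conclusion $E_{f_0}\Pi[\mathcal{S}_n\given X,Y]\mathds{1}_{A_n}\lesssim e^{-C'n\epsilon_n^2}$ only requires $P_{f_0}(A_n)\to1$, with no rate. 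Your sub-Gaussian refinement is therefore an unnecessary detour, and it is also the only place where your argument is not actually complete.

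Two concrete issues with that detour. First, the variance control: you bound the conditional variance $\sum_i \bar W(x_i)^2$ of $\bar L_n$ by ``Jensen plus Lemma \ref{lem:norms}'', but Lemma \ref{lem:norms} compares $L_2(\mathcal{X};P_n)$ and $L_2(\mathcal{X};G)$ norms only for $w$ in the sieve $\mathcal{G}_n$, and the averaged function $\bar W=\int_{B_n^*}(\mathcal{A}f-\mathcal{A}f_0)\,d\Pi(f\given B_n^*)$ has no reason to lie in $\mathcal{G}_n$; one would need an extra truncation/Cauchy--Schwarz argument (or accept a diverging factor $K_n$ in the variance bound, which destroys the exponent $c_0$ and hence the bookkeeping $c_0/2>C/2$). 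Second, the need for exponential decay of $P(\tilde A_n^c)$ arises only because you insist on an event $A_n$ measurable in $X$ alone and pay the price $\eta_n=P_{Y\given X}(\tilde A_n^c)$ additively; the paper's event is in effect a joint $(X,Y)$-event, which is harmless everywhere the lemma is applied (it is always intersected with other $(X,Y)$-events). If you either allow $A_n$ to depend on $(X,Y)$ or simply drop the exponential refinement and use the Chebyshev bound, your proof reduces to the paper's and all remaining steps are correct, with $C'=C-c>C/2$ for any $1<c<C/2$.
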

\begin{proof}
For $KL(f_0\| f)=P_{f_0}\log\left(dP_{f_0}/dP_f\right)$ and $V\left(f_0\| f\right)=P_{f_0}|\log\left(dP_{f_0}/dP_f\right)|^2$, in the random design regression model, in view of Lemma 2.7 of \cite{fundamentalsBNP}, the neighbourhood
\[ B_2(f_0; \epsilon_n)\coloneqq \left(f\colon\ KL(f_0\|f)\leq n\epsilon_n^2,\ V\left(f_0\| f\right)\leq n\epsilon_n^2\right)\]
contains the ball $\big\{f\colon  \norm{\mathcal{A}f-\mathcal{A}f_0}_{L_2\left(\mathcal{X};G\right)}\leq \epsilon_n \big\}$. Therefore, 
\[ \Pi \big[f\colon\ \norm{\mathcal{A}f-\mathcal{A}f_0}_{L_2\left(\mathcal{X};G\right)}\leq \epsilon_n \big] \leq \Pi\left[B_2(f_0; \epsilon_n)\right].\]
By Lemma 8.10 in \cite{fundamentalsBNP}, for any $c>1$, there exists an event $A_n^c$ of vanishing mass such that on $A_n$
\[ \int dP_{f}/dP_{f_0}(X,Y)\Pi(df) \geq \Pi\left[B_2(f_0; \epsilon_n)\right]e^{-cn\epsilon_n^2}. \]
Let us define  $B_n=A_n\cap \{\psi=0\}$ for any $\psi\colon \left(\mathcal{X}\times \mathbb{R}\right)^{n}\mapsto \{0,1\}$ such that $E_{f_0}\psi \to 0$, implying $P(B_n^c)=o(1)$. Then, taking $c<C$, 
\begin{align*}
E_{f_0}\Pi\left[\mathcal{S}_n\given X,Y\right]\mathds{1}_{B_n}&=  E_{f_0}\frac{\int_{\mathcal{S}_n} dP_{f}/dP_{f_0}(X,Y) \left(1-\psi\right)(X,Y)d\Pi (f)}{\int dP_{f}/dP_{f_0}(X,Y)d\Pi (f)}\mathds{1}_{B_n}\\
&\lesssim e^{cn\epsilon_n^2} \frac{\int_{\mathcal{S}_n} E_{f_0}dP_{f}/dP_{f_0} \left(1-\psi\right)d\Pi (f)}{\Pi[B_2(f_0; \epsilon_n)]}\\
&\lesssim  e^{cn\epsilon_n^2} \frac{\int_{\mathcal{S}_n} E_f \left(1-\psi\right)d\Pi (f)}{  \Pi[f\colon\ \norm{\mathcal{A}f-\mathcal{A}f_0}_{L_2(\mathcal{X};G)}\leq \epsilon_n ] }\\
&\lesssim e^{cn\epsilon_n^2}   \frac{\Pi[\mathcal{S}_n]}{\Pi[f\colon\ \norm{\mathcal{A}f-\mathcal{A}f_0}_{L_2(\mathcal{X};G)}\leq \epsilon_n]}
\lesssim e^{-C'n\epsilon_n^2}.
\end{align*}
\end{proof}

%

\begin{lemma}\label{lem:norms}
Assume that $\norm{g_j}_\infty \lesssim j^\gamma$ and that $\alpha+p>1+2\gamma$ in case of the mildly ill-posed inverse problem.
Then, there exists an event $B_n\subset \mathcal{X}^n$ with $P_X\big(B_n^c\big)=o(1)$, $C>0$ and a measurable subset $\mathcal{G}_n\subset L_2(\mathcal{X};G)$ with $\Pi(f:\, \mathcal{A}f\in \mathcal{G}_n^c)=o(e^{-n^{\frac{1}{1+2\gamma}}n\varepsilon_n^2})$ satisfying
\begin{align}
 \|w\|_{L_2(\mathcal{X};G)}^2\leq C (\|w\|_{L_2(\mathcal{X};P_n)}^2+\varepsilon_n^2)\label{eq:compare:norms}
\end{align}
and
\begin{align}
\|w\|_{L_2(\mathcal{X};P_n)}^2\leq C ( \|w\|_{L_2(\mathcal{X};G)}^2+\varepsilon_n^2)\label{eq:compare:norms2}
\end{align}
for any $X\in B_n$ and $w\in \mathcal{G}_n$.
\end{lemma}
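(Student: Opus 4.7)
The plan is to work in the spectral basis $(g_j)$ of the kernel in \eqref{def:prior:inv}, expanding $w=\sum_j w_j g_j$. Under $\Pi_{\mathcal{A}}$ the coefficients satisfy $w_j=\lambda_j^{1/2}\kappa_j Z_j$ with $Z_j\stackrel{iid}{\sim}N(0,1)$; the population norm becomes $\|w\|_{L_2(\mathcal{X};G)}^2=\sum_j w_j^2$ and the empirical norm becomes $\|w\|_{L_2(\mathcal{X};P_n)}^2=\sum_{i,j}w_iw_j\hat K_{ij}$ with $\hat K_{ij}=n^{-1}\sum_{k=1}^n g_i(x_k)g_j(x_k)$. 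Fixing a spectral cutoff $J_n$ to be calibrated, I split $w=P_{J_n}w+P_{J_n}^{\perp}w$ and reduce the two-sided comparison to (a) a near-isometry for the first $J_n$ frequencies on a high-probability design event $B_n$, and (b) uniform smallness of the tail $P_{J_n}^{\perp}w$ in both norms on a prior sieve $\mathcal{G}_n$.

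For (a), I take $B_n=\{\max_{i,j\le J_n}|\hat K_{ij}-\delta_{ij}|\le\tau_n\}$. Since $\|g_ig_j\|_\infty\lesssim(ij)^{\gamma}\le J_n^{2\gamma}$, Bernstein's inequality and a union bound give $P_X(B_n^c)\lesssim J_n^2 e^{-cn\tau_n^2/J_n^{4\gamma}}\to 0$ whenever $n\tau_n^2\gg J_n^{4\gamma}\log n$. On $B_n$, Cauchy--Schwarz yields
\begin{equation*}
\big|\|P_{J_n}w\|_{L_2(\mathcal{X};P_n)}^2-\|P_{J_n}w\|_{L_2(\mathcal{X};G)}^2\big|\le \tau_n J_n\,\|P_{J_n}w\|_{L_2(\mathcal{X};G)}^2.
\end{equation*}
Choosing $J_n\asymp n^{1/(1+2\alpha+2p)}$ in the mildly ill-posed case (resp.\ $J_n\asymp\log^{1/p}n$ in the severe one), the window $J_n^{4\gamma}\log n/n\ll\tau_n^2\ll J_n^{-2}$ is non-empty exactly under the hypothesis $\alpha+p>1+2\gamma$, so the low-frequency norms match up to an arbitrarily small multiplicative loss.

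For (b), I define $\mathcal{G}_n$ through two Gaussian tail conditions: first, $\sum_{j>J_n}w_j^2\le C\varepsilon_n^2$, which by Laurent--Massart fails with prior probability at most $e^{-cJ_n}$ and directly controls $\|P_{J_n}^{\perp}w\|_{L_2(\mathcal{X};G)}^2$ since $\sum_{j>J_n}\sigma_j^2\asymp\varepsilon_n^2$ at the chosen cutoff, with $\sigma_j^2=\lambda_j\kappa_j^2$; second, coordinate envelopes $|Z_j|\le R_j$ with $R_j^2=c_1\log(j+1)+c_2\,n^{1/(1+2\gamma)}n\varepsilon_n^2$, whose complement has prior mass $o(e^{-n^{1/(1+2\gamma)}n\varepsilon_n^2})$ by a summable union bound. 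After enlarging $B_n$ to include $\max_{j\le N_n}|\|g_j\|_{L_2(\mathcal{X};P_n)}^2-1|\le 1$ for a suitable $N_n$, the tail empirical norm is a quadratic form in the envelope-bounded coefficients, and Hanson--Wright combined with (a) at a shifted cutoff produces $\|P_{J_n}^{\perp}w\|_{L_2(\mathcal{X};P_n)}^2\lesssim\varepsilon_n^2$ uniformly over $\mathcal{G}_n\cap B_n$. Combining (a) and (b) via $(a+b)^2\le 2a^2+2b^2$ then delivers \eqref{eq:compare:norms} and \eqref{eq:compare:norms2} with a common constant.

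The main obstacle is the combined prescription on $\mathcal{G}_n$: its prior complement must be $o(e^{-n^{1/(1+2\gamma)}n\varepsilon_n^2})$, yet it must be restrictive enough to give a uniform empirical-norm tail bound. The coordinate envelopes $R_j$ are precisely what trade these off, growing slowly enough in $j$ for the per-coordinate union bound to close while being large enough globally to absorb the prescribed exponential factor; simultaneously, the hypothesis $\alpha+p>1+2\gamma$ is exactly what makes the choice of $J_n$ and $\tau_n$ in (a) feasible without exhausting the budget left for the tail control.
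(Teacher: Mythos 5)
There is a genuine gap, and it sits at the heart of the lemma: the required prior-mass bound $\Pi(f:\,\mathcal{A}f\in\mathcal{G}_n^c)=o(e^{-n^{1/(1+2\gamma)}n\varepsilon_n^2})$ carries the extra polynomial factor $n^{1/(1+2\gamma)}$ in the exponent, and your sieve cannot meet it at the cutoff you choose. Your first tail condition, $\sum_{j>J_n}w_j^2\le C\varepsilon_n^2$ with $J_n\asymp n^{1/(1+2\alpha+2p)}$, is a deviation event for $\sum_{j>J_n}\lambda_j\kappa_j^2 Z_j^2$ whose mean is already of order $\varepsilon_n^2$; by Laurent--Massart its complement has prior mass of order $e^{-cn\varepsilon_n^2}$ at best (the relevant scale is $\varepsilon_n^2/\sup_{j>J_n}\lambda_j\kappa_j^2\asymp n\varepsilon_n^2$), not $e^{-cJ_n}$ as you claim, and in either case this is \emph{not} $o(e^{-n^{1/(1+2\gamma)}n\varepsilon_n^2})$. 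The coordinate envelopes $R_j$ do have the right complement mass, but they are then far too loose to control the tail norm (they allow $\sum_{j>J_n}\lambda_j\kappa_j^2Z_j^2$ to be of order $\varepsilon_n^2\cdot n^{1/(1+2\gamma)}n\varepsilon_n^2\gg\varepsilon_n^2$), so intersecting the two conditions does not rescue the argument: the intersection inherits the too-large complement mass of the ball condition.

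The structural problem is that your two halves are incompatible. To make the tail prior mass genuinely $o(e^{-n^{1/(1+2\gamma)}n\varepsilon_n^2})$ one must push the cutoff up to $k\approx n^{1/(1+2\gamma)}$ (up to logarithms), where the eigenvalues $\lambda_j\kappa_j^2$ are so small that even a sup-norm tail bound $\|w^{\perp k}\|_\infty\le\varepsilon_n$ fails with only that super-exponentially small probability (this is the paper's Lemma~\ref{lemma: supnorm high indices prob}, and it is exactly where $\alpha+p>1+2\gamma$ enters). But your low-frequency near-isometry, built from an entrywise Bernstein bound plus Cauchy--Schwarz, incurs a factor $\tau_nJ_n$ and so requires $J_n^{4\gamma+2}\log n\ll n$; at $J_n\approx n^{1/(1+2\gamma)}$ this would force $4\gamma+2<1+2\gamma$, which is impossible. (Incidentally, at your smaller cutoff the window is nonempty under $\alpha+p>1/2+2\gamma$, not ``exactly'' under $\alpha+p>1+2\gamma$.) The paper avoids this by controlling the operator norm of $\Sigma_{n,k}-I_k$ directly via Rudelson's inequality, which only needs $k^{1+2\gamma}\log k\ll n$ and therefore tolerates the large cutoff $k=n^{1/(1+2\gamma)}/\log^{2/(1+2\gamma)}n$. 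You would need to replace your entrywise Gram-matrix argument by an operator-norm bound of this type and move the cutoff accordingly; as written, the proof does not close.
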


\begin{proof}

Let us take $k=n^{\frac{1}{1+2\gamma}}/\log^{2/(1+2\gamma)} n$ and define the sieve
 \begin{equation}\label{def: sieve}
   \mathcal{F}_n=\{f\in  L_2(\mathcal{T};\mu):\, \mathcal{A}f\in\mathcal{G}_n\},\text{ with } \mathcal{G}_n=\{ w\in L_2(\mathcal{X};G):\,\ \|w^{\perp k} \|_\infty\leq  \varepsilon_n\},
 \end{equation} 
 where $w^{\perp k}(x)=\sum_{j=k+1}^\infty w_j g_j(x)$. Similarly we will denote by $w^{k}(x)=\sum_{j=1}^k w_j g_j(x)$. Next we show that $\Pi (\mathcal{F}_n^c)=o(e^{-n^{\frac{1}{1+2\gamma}}n\varepsilon_n^2})$.

First note that the assumption $\norm{g_j}_\infty \lesssim j^\gamma$ implies that $\|w^{\perp k}\|_{\infty}\leq C \sum_{j=k+1}^\infty  j^{\gamma}|w_j|$. Under the prior $\Pi$ on $f$, we have $f_j = \langle f, g_j\rangle_{L_2(\mathcal{X};G)} \stackrel{d}{=} \lambda_j^{1/2} Z_j$ with $Z_j\sim^{iid}N(0,1)$, therefore
 \begin{align*}
 \Pi(f:\, \|\mathcal{A}f^{\perp k} \|_\infty>  \varepsilon_n )\leq P\big( C \sum_{j=k+1}^\infty \kappa_j \lambda_j^{1/2} j^{\gamma}|Z_j|>  \varepsilon_n \big)=o(e^{-n^{\frac{1}{1+2\gamma}}n\varepsilon_n^2}),
 \end{align*}
 where the last equation follows from Lemma \ref{lemma: supnorm high indices prob} and $\alpha+p>1+2\gamma$. The above two displays together imply that $\Pi(f:\, \mathcal{A}f\in \mathcal{G}_n^c)\lesssim  e^{-C^on\varepsilon_n^2}$.

It remains to show that for $w\in\mathcal{G}_n$ there exists an event $B_n$ with $P_{X}\big( B_n\big)\rightarrow 1$, such that for $X\in B_n$ the inequalities \eqref{eq:compare:norms} and \eqref{eq:compare:norms2} hold.  This follows from the fact that for $w\in\mathcal{G}_n$,
\[\|w^{\perp k}\|_{L_2(\mathcal{X};G)}\ \vee\ \|w^{\perp k}\|_{L_2(\mathcal{X};P_n)}\leq \|w^{\perp k}\|_\infty\leq\varepsilon_n.\]
This inequality also allows to write, under the event of Lemma \ref{lem:rudelson} which we note $B_n$, that
\begin{align*}
 \|w\|_{L_2(\mathcal{X};P_n)}\leq \|w^k\|_{L_2(\mathcal{X};P_n)}+ \|w^{\perp k}\|_{L_2(\mathcal{X};P_n)}\lesssim
  \|w^k\|_{L_2(\mathcal{X};G)}+\varepsilon_n\leq \|w\|_{L_2(\mathcal{X};G)}+\varepsilon_n,
\end{align*}
proving \eqref{eq:compare:norms} (a similar argument proves \eqref{eq:compare:norms2}).

\end{proof}

\begin{lemma}\label{lem:rudelson}
For $k=n^{\frac{1}{1+2\gamma}}/\log^{2/(1+2\gamma)} n$,  $\gamma\geq 0$, there exists a constant $C_0>1$ such that, with $P_{X}$-probability tending to one,
\[C_0^{-1}\|w^k\|_{ L_2(\mathcal{X};G)}\leq  \|w^k\|_ {L_2(\mathcal{X};P_n)}\leq C_0\|w^k\|_{ L_2(\mathcal{X};G)},\]
for any $w\in L_2(\mathcal{X};G)$, and where $w^k(x)=\sum_{j=1}^k w_j g_j(x)$ is the orthogonal projection on the $k$ first elements of an orthonormal basis $(g_j)_{j\in\mathbb{N}}$ satisfying $\norm{g_j}_\infty\lesssim j^\gamma$.
\end{lemma}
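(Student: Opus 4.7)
The plan is to rephrase the two-sided bound as an operator-norm bound on the empirical vs.\ population Gram matrix on the $k$-dimensional subspace spanned by $g_1,\dots,g_k$. Writing $w^k(x)=\sum_{j=1}^k w_j g_j(x)$ and $\mathbf{w}=(w_1,\dots,w_k)^T$, and setting the feature vectors $\Phi_i=(g_1(x_i),\dots,g_k(x_i))^T\in\R^k$, orthonormality of $(g_j)_j$ in $L_2(\mathcal{X};G)$ gives $\|w^k\|_{L_2(\mathcal{X};G)}^2=\|\mathbf{w}\|_2^2$ and $\|w^k\|_{L_2(\mathcal{X};P_n)}^2=\mathbf{w}^T\widehat\Sigma_n\mathbf{w}$, where $\widehat\Sigma_n=\tfrac1n\sum_{i=1}^n \Phi_i\Phi_i^T$ and $E_X[\widehat\Sigma_n]=I_k$. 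Hence the required two-sided inequality, uniformly in $\mathbf{w}\in\R^k$, is equivalent to $\|\widehat\Sigma_n-I_k\|_{\mathrm{op}}\leq 1/2$ (say), which would yield $C_0=\sqrt{2}$.

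To obtain this spectral-norm control I would apply a matrix concentration inequality (matrix Bernstein, or equivalently Rudelson's inequality for sums of rank-one random matrices). The crucial input is a deterministic bound on $\|\Phi_i\Phi_i^T\|_{\mathrm{op}}=\|\Phi_i\|_2^2$: using $\|g_j\|_\infty\lesssim j^\gamma$ one has
\begin{equation*}
\|\Phi_i\|_2^2=\sum_{j=1}^k g_j(x_i)^2\lesssim \sum_{j=1}^k j^{2\gamma}\lesssim k^{1+2\gamma}\eqqcolon M_k,
\end{equation*}
uniformly in $x_i$. The matrix Bernstein inequality then yields, for any $t\in(0,1]$,
\begin{equation*}
P_X\bigl(\|\widehat\Sigma_n-I_k\|_{\mathrm{op}}>t\bigr)\leq 2k\exp\!\Bigl(-\frac{c n t^2}{M_k}\Bigr).
\end{equation*}

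Plugging in $k=n^{1/(1+2\gamma)}/\log^{2/(1+2\gamma)}n$ gives $M_k\asymp n/\log^2 n$, so $nt^2/M_k\asymp t^2\log^2 n$, and the right-hand side is bounded by $2k\exp(-c\log^2 n)$ with $t=1/2$, which tends to zero since $\log k\lesssim \log n\ll \log^2 n$. Defining $B_n$ to be the event $\{\|\widehat\Sigma_n-I_k\|_{\mathrm{op}}\leq 1/2\}$, we therefore have $P_X(B_n)\to 1$, and on $B_n$,
\begin{equation*}
\tfrac12\|\mathbf{w}\|_2^2\;\leq\;\mathbf{w}^T\widehat\Sigma_n\mathbf{w}\;\leq\;\tfrac32\|\mathbf{w}\|_2^2,\qquad \mathbf{w}\in\R^k,
\end{equation*}
which is precisely the claimed two-sided inequality with $C_0=\sqrt{2}$ (up to an absolute constant).

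The only delicate point is checking that the scaling of $k$ is exactly what is needed for matrix Bernstein to give vanishing probability: one needs $M_k\log k\ll n$, and the chosen $k=n^{1/(1+2\gamma)}/\log^{2/(1+2\gamma)}n$ is tuned so that $M_k\asymp n/\log^2 n$, leaving an extra $\log^2 n/\log k\to\infty$ factor to absorb the union-bound $\log k$ and still give concentration. No other finer structure of the basis beyond the sup-norm bound $\|g_j\|_\infty\lesssim j^\gamma$ is used.
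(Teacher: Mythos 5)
Your proposal is correct and follows essentially the same route as the paper: both reduce the two-sided norm equivalence to the operator-norm bound $\|\widehat\Sigma_n-I_k\|_{\mathrm{op}}\leq 1/2$ for the empirical Gram matrix of $(g_1,\dots,g_k)$, using the same key input $\sum_{j=1}^k g_j(x)^2\lesssim k^{1+2\gamma}\asymp n/\log^2 n$. The only difference is the concentration tool — you invoke matrix Bernstein to get an exponential tail directly, whereas the paper uses a modified Rudelson inequality to bound $E_X\|\Sigma_{n,k}-I_k\|_2\lesssim\sqrt{k^{1+2\gamma}\log(k)/n}=o(1)$ and concludes via Markov; both are valid under the same hypotheses.
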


\begin{proof}
First we introduce $\Sigma_{n,k}=n^{-1}\boldsymbol{G}_{n,k}^T\boldsymbol{G}_{n,k}$ with $\boldsymbol{G}_{n,k}=(\boldsymbol{g} (X_1),....,\boldsymbol{g} (X_n))^T\in \mathbb{R}^{n\times k}$, with $\boldsymbol{g} (X_1)=\big(g_1(X_1),...,g_k(X_1)\big)^T$. Note that $E_{\mathbb{X}}\Sigma_{n,k}=I_k$ as the eigenbasis $(g_j)_{j\in\mathbb{N}}$ is orthonormal w.r.t. the design distribution $G$. Then by the modified version of Rudelson's inequality \cite{rudelson} we get that
\begin{align*}
E_{\mathbb{X}}\| \Sigma_{n,k}-I_k \|_2\leq C \sqrt{\frac{\log k}{n}}E_{\mathbb{X}}(\|\boldsymbol{g} (X_1) \|_2^{\log n})^{1/\log n}.
\end{align*}
Note that by the boundedness assumption $\sum_{j=1}^k g_j(x)^2\leq C k^{1+2\gamma}$, $x\in\mathcal{X}$, so that the right hand side of the preceding display is bounded from above by constant times $\sqrt{k^{1+2\gamma}\log (k)/n}=o(1)$. Therefore, noting $\boldsymbol{w}=(w_1,...,w_k)$ for $w\in L_2(\mathcal{X};G)$, 
\begin{align*}
\underset{w\in L_2(\mathcal{X};G)}{\sup} \frac{\Big|\|w^k\|_{ L_2(\mathcal{X};G)}^2-\|w^k\|_{{ L_2(\mathcal{X};P_n)}}^2\Big|}{ \|w^k\|_{ L_2(\mathcal{X};G)}^2}&= \underset{w\in L_2(\mathcal{X};G)}{\sup} \Big| \boldsymbol{w}^{T} (I_k-\Sigma_{n,k})\boldsymbol{w}\Big|/ \|w^k\|_{ L_2(\mathcal{X};G)}^2\\
&\leq \underset{w\in L_2(\mathcal{X};G)}{\sup}  \|I_k-\Sigma_{n,k}\|_2 \|\boldsymbol{w}\|_2^2/ \|w^k\|_{ L_2(\mathcal{X};G)}^2\\
&=o_{P_{\mathbb{X}}}(1). 
\end{align*}
Then, on an event $A_n(\mathbb{X})$ with $P_{\mathbb{X}}(A_n(\mathbb{X}))$ tending to one, for all $w\in\mathcal{G}_n$
\begin{align}
 \|w^k\|_{ L_2(\mathcal{X};G)}^2/2 \leq \|w^k\|_{{ L_2(\mathcal{X};P_n)}}^2\leq 2 \|w^k\|_{ L_2(\mathcal{X};G)}^2,\label{eq:reg:metrics}
\end{align}
for any $w$, verifying the statement.
\end{proof}

\begin{lemma}\label{lemma: supnorm high indices prob}
Assume that $ \nu_j\leq  C j^{-3/2-\delta}$, $\delta>\gamma\geq 0$ and that $n\varepsilon_n^2\rightarrow\infty$. Then, for $Z_j$ independent standard normal random variables and any $C'>0$,
\begin{align*}
P\left(\sum_{j=n^{\frac{1}{1+2\gamma}}/\log^{2/(1+2\gamma)} n}^{\infty}\nu_j |Z_j|\geq C' \varepsilon_n\right)=o(e^{-n^{\frac{1}{1+2\gamma}}n\varepsilon_n^2}).
\end{align*}
\end{lemma}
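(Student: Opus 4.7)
The plan is to apply Gaussian Lipschitz concentration (Borell--TIS) to $S\coloneqq \sum_{j>k_n}\nu_j|Z_j|$, where $k_n=n^{1/(1+2\gamma)}/\log^{2/(1+2\gamma)}n$. The functional $(z_j)_j\mapsto\sum_{j>k_n}\nu_j|z_j|$ is Lipschitz in $\ell_2$ with constant $L\coloneqq\bigl(\sum_{j>k_n}\nu_j^2\bigr)^{1/2}$ (using $\big||z_j|-|z'_j|\big|\leq|z_j-z'_j|$ and Cauchy--Schwarz). Applying Borell--TIS to the truncations $S_N=\sum_{k_n<j\leq N}\nu_j|Z_j|$ and passing to the limit gives, for all $t\geq 0$,
\[
P\bigl(S\geq E[S]+t\bigr)\leq \exp\bigl(-t^2/(2L^2)\bigr).
\]
The same estimate follows equivalently by a Chernoff argument, using that $|Z_j|-E|Z_j|$ is sub-Gaussian with parameter $1$ so that $E[e^{s|Z_j|}]\leq e^{s\sqrt{2/\pi}+s^2/2}$ term by term in the product MGF.

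The two ingredients needed are polynomial in $k_n$. From $\nu_j\leq Cj^{-3/2-\delta}$ and $E|Z_j|=\sqrt{2/\pi}$ we get
\[
E[S]\lesssim \sum_{j>k_n}j^{-3/2-\delta}\lesssim k_n^{-1/2-\delta},\qquad L^2\lesssim \sum_{j>k_n}j^{-3-2\delta}\lesssim k_n^{-2-2\delta}.
\]
Setting $\eta\coloneqq(\delta-\gamma)/(1+2\gamma)>0$, substitution yields $k_n^{-1/2-\delta}\lesssim n^{-1/2-\eta}\,\mathrm{polylog}(n)$. Combined with $n\varepsilon_n^2\to\infty$ (equivalently $\sqrt{n}\,\varepsilon_n\to\infty$), this forces $E[S]/\varepsilon_n\to 0$, so that $C'\varepsilon_n-E[S]\geq C'\varepsilon_n/2$ eventually and the concentration bound collapses to
\[
P(S\geq C'\varepsilon_n)\leq \exp\!\bigl(-c(C')^2\varepsilon_n^2\,k_n^{2+2\delta}\bigr).
\]

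To finish, I would compare the exponent to the target decay rate:
\[
\frac{\varepsilon_n^2\,k_n^{2+2\delta}}{n^{1/(1+2\gamma)}\,n\,\varepsilon_n^2}=\frac{k_n^{2+2\delta}}{n^{1+1/(1+2\gamma)}}=n^{2(\delta-\gamma)/(1+2\gamma)}\,\mathrm{polylog}^{-1}(n)\longrightarrow\infty,
\]
once again by the strict inequality $\delta>\gamma$. Therefore $P(S\geq C'\varepsilon_n)=o(e^{-n^{1/(1+2\gamma)}n\varepsilon_n^2})$, as claimed. The delicate step in the argument is verifying $E[S]=o(\varepsilon_n)$: this is where the strict gap $\delta>\gamma$ is essential, since only then does $k_n^{-1/2-\delta}$ beat $n^{-1/2}$ by a polynomial factor and hence beat $\varepsilon_n$ under the mild assumption $\sqrt{n}\,\varepsilon_n\to\infty$. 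Everything else reduces to geometric-tail bookkeeping and one invocation of Gaussian concentration.
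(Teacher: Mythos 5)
Your proof is correct, but it takes a genuinely different route from the paper's. The paper partitions the tail $\{j\geq k\}$ into blocks $[ik,(i+1)k)$, distributes the threshold $C'\varepsilon_n$ over the blocks with summable weights $\bigl(i(1+\log^2 i)\bigr)^{-1}$, and applies a Chernoff bound to each block sum $\sum_{j=ik}^{(i+1)k-1}|Z_j|$ using $E e^{t|Z|}\leq 2e^{t^2/2}$; this produces a per-block bound $2^k e^{-c\,i\,k^{2+2\delta}\varepsilon_n^2}$ whose combinatorial prefactor $2^k$ must then be absorbed into the exponent (which again uses $\delta>\gamma$). You instead treat the whole tail sum $S=\sum_{j>k}\nu_j|Z_j|$ as a single Lipschitz functional of the Gaussian sequence and invoke Borell--TIS (equivalently, the product sub-Gaussian MGF bound), which requires only the two scalar quantities $E[S]\lesssim k^{-1/2-\delta}$ and $L^2\lesssim k^{-2-2\delta}$ and yields the same effective exponent $\varepsilon_n^2k^{2+2\delta}$ without any blocking or $2^k$ factor. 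The hypothesis $\delta>\gamma$ enters both arguments in the same final comparison of $\varepsilon_n^2 k^{2+2\delta}$ against $n^{1+1/(1+2\gamma)}\varepsilon_n^2$; your additional use of it to verify $E[S]=o(\varepsilon_n)$ (via $k^{-1/2-\delta}\lesssim n^{-1/2-\eta}\,\mathrm{polylog}(n)$ and $\sqrt{n}\,\varepsilon_n\to\infty$) is the correct analogue of the paper's implicit requirement that the block thresholds dominate the block means. Your version is arguably cleaner and slightly more general (it needs only square-summability rates of the $\nu_j$, not the specific block structure); the paper's version is more elementary in that it uses only the one-dimensional Gaussian MGF. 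The minor limiting argument you gesture at when passing from the truncations $S_N$ to $S$ is harmless since $S_N\uparrow S$ monotonically and the bound is uniform in $N$.
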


\begin{proof}
Let us introduce the notation $k=n^{\frac{1}{1+2\gamma}}/\log^{2/(1+2\gamma)} n$ and note that, for any $C_1>0$, there exist positive constants $C_2,C_3$ such that
\begin{align}
P\Big(\sum_{j=k}^{\infty}\nu_j |Z_j|\geq C_1 \varepsilon_n\Big)&\leq \sum_{i=1}^{\infty} P\Big(\sum_{j=ik}^{(i+1)k-1}\nu_j|Z_j|\geq C_2(i\left(1+\log^2 i)\right)^{-1} \varepsilon_n \Big)\nonumber\\
&\leq   \sum_{i=1}^{\infty} P\Big(\sum_{j=ik}^{(i+1)k-1} C i^{-3/2-\delta}k^{-3/2-\delta} |Z_j|\geq  C_2\left(i(1+\log^2 i)\right)^{-1}\varepsilon_n\Big)\nonumber\\
&\leq \sum_{i=1}^{\infty} P\Big(\sum_{j=ik}^{(i+1)k-1}  |Z_j|\geq  C_3 k^{3/2+\delta}i^{1/2}\varepsilon_n\Big)
.\label{eq:help:sum}
\end{align}
We show below that 
\begin{align}
 P\left(\sum_{j=ik}^{(i+1)k-1} |Z_j|\geq  C_3k^{3/2+\delta}i^{1/2}\varepsilon_n\right)\leq 2^{k} e^{-c ik^{2+2\delta}\varepsilon_n^2}. \label{eq:help:Bernstein}
\end{align}
which in turn implies (together with $n\varepsilon_n^2 \rightarrow\infty$ and $\delta>\gamma\geq0$) that the rhs of \eqref{eq:help:sum} is further bounded by
\begin{align*}
\sum_{i=1}^{\infty} 2^{k} e^{-c_1ik^{2+2\delta}\varepsilon_n^2} \lesssim 2^k e^{- c_1 k^{2+2\delta}\varepsilon_n^2}=o(e^{-n^{\frac{1}{1+2\gamma}}n\varepsilon_n^2}).
\end{align*}
It remained to prove \eqref{eq:help:Bernstein}. For convenience, let us introduce the notation $c_{i,k}=C_3i^{1/2} k^{3/2+\delta} $. Following the proof of Chernoff's inequality and recalling that the characteristic function of the absolute value of the standard normal distribution satisfies that $Ee^{t|Z|}\leq 2 e^{t^2/2}$, we get for $\gamma= c_{i,k} \varepsilon_n/k$ that
\begin{align*}
 P\left(\sum_{j=ik}^{(i+1)k-1} |Z_j|\geq c_{i,k} \varepsilon_n\right)&=  P\left(e^{\gamma\sum_{j=ik}^{(i+1)k-1}|Z_j|}\geq e^{\gamma c_{i,k} \varepsilon_n}\right)\\
 &\leq e^{-\gamma c_{i,k} \varepsilon_n}E e^{\gamma  \sum_{j=ik}^{(i+1)k-1} |Z_j|}\leq e^{-\gamma c_{i,k} \varepsilon_n} \prod_{j=ik}^{(i+1)k-1} 2e^{\gamma^2/2}\\
 &= 2^k e^{k\gamma^2/2-\gamma c_{i,k} \varepsilon_n}=2^k  e^{- c_{i,k}^2 \varepsilon_n^2/(2k)}.
\end{align*}
\end{proof}

\begin{lemma}\label{lem:Af_0}
Let $f_0\in \bar{H}^{\beta}$, for some $\beta>0$, $J\in\mathbb{N}$ and assume that $\|g_j\|_{\infty}\leq Cj^{\gamma}$ for some $\gamma\geq0$. Furthermore, in case of the mildly ill-posed inverse problem assume that $p+\beta-\gamma>1$. Then $P_X$-almost surely
\begin{align*}
\|\mathcal{A}f_0^{\perp J}\|_{L^{2}(\mathcal{X},P_n)}\lesssim   q(J),
\end{align*}
where $q(J)=J^{-p-\beta+\gamma+1}$ in the mildly and $q(J)=J^{\gamma-\beta} e^{-cJ^p}$ in the severely ill-posed inverse problem. Furthermore, for any $J\leq k=n^{\frac{1}{1+2\gamma}}/\log^{2/(1+2\gamma)} n$ , with $P_X$-probability tending to one
\begin{align*}
\|\mathcal{A}f_0^{\perp J}\|_{L^{2}(\mathcal{X},P_n)}\lesssim \|\mathcal{A}f_0^{\perp J}\|_{L^{2}(\mathcal{X},G)}+q\left(n^{\frac{1}{1+2\gamma}}/\log^{2/(1+2\gamma)} n\right).
\end{align*}
\end{lemma}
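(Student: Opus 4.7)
The plan is to handle the two assertions separately: the first is a deterministic sup-norm bound, and the second combines that bound with the norm-equivalence result of Lemma \ref{lem:rudelson}.

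For the first assertion, I would control $\mathcal{A}f_0^{\perp J}$ in sup-norm. Writing $\mathcal{A}f_0^{\perp J}(x)=\sum_{j>J}\kappa_j f_{0,j}g_j(x)$ and applying the triangle inequality together with the trivial bound $|f_{0,j}|\leq j^{-\beta}\|f_0\|_\beta$ (a direct consequence of $f_0\in\bar{H}^\beta$) and $\|g_j\|_\infty\lesssim j^\gamma$, I obtain
\[
\|\mathcal{A}f_0^{\perp J}\|_\infty \lesssim \|f_0\|_\beta\sum_{j=J+1}^\infty \kappa_j\, j^{\gamma-\beta}.
\]
In the mildly ill-posed case with $\kappa_j\asymp j^{-p}$, the hypothesis $p+\beta-\gamma>1$ ensures convergence of the tail and gives an upper bound of order $J^{1-p-\beta+\gamma}$. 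In the severely ill-posed case with $\kappa_j\asymp e^{-cj^p}$ and $p\geq 1$, the leading term dominates the geometrically decaying tail and yields an upper bound of order $J^{\gamma-\beta}e^{-cJ^p}$. Both expressions coincide with $q(J)$, and since $\|\cdot\|_{L^2(\mathcal{X},P_n)}\leq \|\cdot\|_\infty$, the bound holds for every design $X$.

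For the second assertion, I would split $\mathcal{A}f_0^{\perp J}=w_1+w_2$ where $w_1=\sum_{j=J+1}^k \kappa_j f_{0,j}g_j$ and $w_2=\mathcal{A}f_0^{\perp k}$. The function $w_1$ lies in $\mathrm{span}(g_1,\dots,g_k)$ (its first $J$ Fourier coefficients simply vanish), so Lemma \ref{lem:rudelson} applies and yields
\[
\|w_1\|_{L^2(\mathcal{X},P_n)} \lesssim \|w_1\|_{L^2(\mathcal{X},G)} \leq \|\mathcal{A}f_0^{\perp J}\|_{L^2(\mathcal{X},G)}
\]
on an event of $P_X$-probability tending to one, where the last step uses orthonormality of $\{g_j\}$ in $L^2(\mathcal{X},G)$. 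For the high-frequency remainder $w_2$, the first part of the present lemma applied with $J$ replaced by $k$ gives $\|w_2\|_{L^2(\mathcal{X},P_n)}\lesssim q(k)$ deterministically. A triangle inequality then combines the two estimates into the claimed bound.

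No step is especially delicate; the main points to verify carefully are that zeroing out the first $J$ Fourier coordinates preserves membership in $\mathrm{span}(g_1,\dots,g_k)$ (so that Lemma \ref{lem:rudelson} genuinely applies to $w_1$), and that the hypothesis $p+\beta-\gamma>1$ used in the mildly ill-posed case is exactly what makes the naive term-by-term bound summable and produces the stated rate $q(J)=J^{1-p-\beta+\gamma}$, rather than the slightly sharper $J^{1/2-p-\beta+\gamma}$ one could hope to extract via Cauchy--Schwarz under a weaker condition.
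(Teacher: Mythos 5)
Your proof is correct and follows essentially the same route as the paper: the first assertion via the sup-norm bound $\|\mathcal{A}f_0^{\perp J}\|_\infty\lesssim\sum_{j>J}\kappa_j j^{\gamma-\beta}$ using $|f_{0,j}|\leq j^{-\beta}\|f_0\|_\beta$, and the second via the decomposition $\mathcal{A}f_0^{\perp J}=(\mathcal{A}f_0^{\perp J}-\mathcal{A}f_0^{\perp k})+\mathcal{A}f_0^{\perp k}$, with Lemma \ref{lem:rudelson} applied to the low-frequency part and the first assertion at level $k$ to the remainder. No discrepancies to report.
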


\begin{proof}
We start with the first assertion. In view of $|f_{0,j}|\leq j^{-\beta} \|f_0\|_\beta$ and triangle inequality one can observe that
\begin{align*}
\|\mathcal{A} f_0^{\perp J}\|_{L^{2}(\mathcal{X},P_n)}\leq \|\mathcal{A}f_0^{\perp J}\|_\infty\leq \sum_{j= J+1}^{\infty} \kappa_j|f_{0,j}|\|g_j\|_\infty \lesssim \sum_{j= J+1}^{\infty} \kappa_j j^{\gamma-\beta}.
\end{align*}
Then for the mildly ill-posed inverse problem (with $\kappa_j\asymp j^{-p}$) the rhs of the preceding display is further bounded by a multiple of $J^{-p-\beta+\gamma+1}$, while in the severely ill-posed inverse problem (with $\kappa_j\asymp e^{-c j^p}$) it is bounded from above by a multiple of $J^{\gamma-\beta} e^{-cJ^p}$ since $p\geq1$.

For the second assertion of the lemma, note that for $J\leq k=n^{\frac{1}{1+2\gamma}}/\log^{2/(1+2\gamma)} n$, by triangle inequality
\begin{align*}
\|\mathcal{A} f_0^{\perp J}\|_{L^{2}(\mathcal{X},P_n)}\leq \|\mathcal{A} f_0^{\perp J}-\mathcal{A} f_0^{\perp k}\|_{L^{2}(\mathcal{X},P_n)}+
\|\mathcal{A} f_0^{\perp k}\|_{L^{2}(\mathcal{X},P_n)}.
\end{align*}
The first term, in view of  Lemma \ref{lem:rudelson}, is bounded by a multiple of $\|\mathcal{A} f_0^{\perp J}-\mathcal{A} f_0^{\perp k}\|_{L^{2}(\mathcal{X},G)}\leq \|\mathcal{A} f_0^{\perp J}\|_{L^{2}(\mathcal{X},G)} $ with $P_X$-probability tending to one by, while the second term is bounded by a multiple of $q\left(n^{\frac{1}{1+2\gamma}}/\log^{2/(1+2\gamma)} n\right)$ following from the first statement of the lemma.
\end{proof}

\section{Proof of Corollary \ref{lemma: number of inducing variables}}

For the first choice \eqref{eq: inducing var matrix} in the mildy ill-posed case, Lemma 4 of \cite{JMLR:v23:21-1128} combined with the polynomial decay of the eigenvalues $\lambda_i\kappa_i^2$ of the process with kernel \eqref{def:prior:inv} gives that
\begin{align*}
E_X \norm{K_{\boldsymbol{\mathcal{A}f}\boldsymbol{\mathcal{A}f}}-Q_{\boldsymbol{\mathcal{A}f}\boldsymbol{\mathcal{A}f}}}&\lesssim nm^{-1-2(\alpha+p)},\\
E_X Tr\left(K_{\boldsymbol{\mathcal{A}f}\boldsymbol{\mathcal{A}f}}-Q_{\boldsymbol{\mathcal{A}f}\boldsymbol{\mathcal{A}f}}\right) &\lesssim  nm^{-2(\alpha+p)}.
\end{align*}
In view of Theorem \ref{th: post rates}, we set\[m=m_n=n^{\frac{1}{1+2p+2\alpha}}\] to translate posterior contraction rates into variational ones.
For the second case \eqref{eq: inducing var op}, since $\alpha+p>1$ and $\underset{j}{\sup}\ \underset{x}{\sup}\ |g_j(x)|<\infty$ under our assumptions, the bounds come from Lemma 5 of \cite{JMLR:v23:21-1128} and are 
\begin{align*}
E_X \norm{K_{\boldsymbol{\mathcal{A}f}\boldsymbol{\mathcal{A}f}}-Q_{\boldsymbol{\mathcal{A}f}\boldsymbol{\mathcal{A}f}}}&\lesssim 1 + nm^{-1-2(\alpha+p)} + n^{\frac{1}{2(\alpha+p)}}m^{-2(\alpha+p)}\log n,\\
E_X Tr\left(K_{\boldsymbol{\mathcal{A}f}\boldsymbol{\mathcal{A}f}}-Q_{\boldsymbol{\mathcal{A}f}\boldsymbol{\mathcal{A}f}}\right) &\lesssim  nm^{-2(\alpha+p)}.
\end{align*}Then, $m=m_n$ as above is sufficient as well.

Finishing with the severely ill-posed problem, we have for both choices of inducing variables
\[ E_X \norm{K_{\boldsymbol{\mathcal{A}f}\boldsymbol{\mathcal{A}f}}-Q_{\boldsymbol{\mathcal{A}f}\boldsymbol{\mathcal{A}f}}}\leq E_X Tr\left(K_{\boldsymbol{\mathcal{A}f}\boldsymbol{\mathcal{A}f}}-Q_{\boldsymbol{\mathcal{A}f}\boldsymbol{\mathcal{A}f}}\right) \leq n\sum_{j>m}\lambda_j\lesssim nm^{-\alpha}e^{-(\xi+2c)m^p},\]where the second inequality comes from Proposition 2 in \cite{ShaweTaylor2002TheSO}. Then, $m=m_n = \left((\xi+2c)\log n\right)^{1/p}$ is sufficient.\\

\textit{Funding.}  Co-funded by the European Union (ERC, BigBayesUQ, project number: 101041064). Views and opinions expressed are however those of the author(s) only and do not necessarily reflect those of the European Union or the European Research Council. Neither the European Union nor the granting authority can be held responsible for them.

{
\small

\bibliographystyle{acm}
 \bibliography{biblio}
}

 \end{document}